\def\ba{\begin{array}}
\def\ea{\end{array}}
\def\bea{\begin{eqnarray}}
\def\eea{\end{eqnarray}}
\def\la{\langle}
\def\ra{\rangle}
\newcommand{\ST}{{\mbox{s.t.}}}
\newcommand{\defeq}{\stackrel{\mathrm{def}}{=}}
\newcommand{\tr}{\operatorname{tr}} 
\newcommand{\Herm}{\mathcal{H}} 
\newcommand{\diag}{\operatorname{diag}} 
\newenvironment{proof}{{\noindent \textbf{Proof:}}}{\hfill\rule{2mm}{2mm}\par}
\newtheorem{lemma}{Lemma}
\newtheorem{theorem}{Theorem}
\newtheorem{proposition}{Proposition}
\newtheorem{remark}{Remark}
\title{A Subgradient Method for Free Material Design
 \thanks{Submitted to the editors .
}}
\author{
Michal Ko\v{c}vara \thanks{School of Mathematics, University of Birmingham, United Kingdom, and Institute of Information Theory and Automation, Academy of Sciences of the Czech Republic, Prague, Czech Republic (email: {kocvara@maths.bham.ac.uk}).}
\and
Yurii Nesterov \thanks{ Center of Operations Research and Econometrics (Catholic University of Louvain), Belgium and National Research University Higher School of Economics, Russia (email: { Yurii.Nesterov@uclouvain.be}).}
\and
Yu Xia \thanks{Faculty of Business Administration, Lakehead University, Canada (email: {yxia@lakeheadu.ca}). The research of this author is supported in part by a discovery grant of NSERC.}
}
\begin{document}

\maketitle
\begin{abstract}
A small improvement in the structure of the material could save the manufactory a lot of money.  The free material design can be formulated as an optimization problem.  However, due to its large scale, second-order methods cannot solve the free material design problem in reasonable size.
We formulate the free material optimization (FMO) problem into a saddle-point form in which the inverse of the stiffness matrix $A(E)$ in the constraint is eliminated.  The size of $A(E)$ is generally large, denoted as $N \times N$.  This is the first formulation of FMO without $A(E)^{-1}$.
We apply the primal-dual subgradient method \cite{Nes07} to solve the restricted saddle-point formula.  This is the first gradient-type method for FMO.
Each iteration of our algorithm takes a total of $\mathcal{O}(N^2)$ floating-point operations and an auxiliary vector storage of size $\mathcal{O}(N)$, compared with formulations having the inverse of $A(E)$ which requires $\mathcal{O}(N^3)$ arithmetic operations and an auxiliary vector storage of size $\mathcal{O}(N^2)$.
To solve the problem, we developed a closed-form solution to a semidefinite least squares problem and an efficient parameter update scheme for the gradient method, which are included in the appendix.  We also approximate a solution to the bounded Lagrangian dual problem.
The problem is decomposed into small problems each only having an unknown of $k\times k$ ($k=3$ or $6$) matrix, and can be solved in parallel.
The iteration bound of our algorithm is optimal for general subgradient scheme.
Finally we present promising numerical results.
\end{abstract}

\textbf{keywords}
	fast gradient method, Nesterov's primal-dual subgradient method, free material optimization, large-scale problems, first-order method, saddle-point, Lagrangian, complexity, duality, constrained least squares.

\textbf{AMS}
	90C90, 90C06, 90C25, 90C30, 90C47, 9008

\section{Introduction}

The approach of \textit{Free Material Optimization (FMO)} optimizes the material structure while the distribution of material and the material itself can be freely varied.  FMO has been used to improve the overall material arrangement in air frame design (\url{www.plato-n.org}).
The fundamentals of FMO were introduced in \cite{MR1327483, Ringertz}.  And the model was further developed in \cite{MR1724765, ZKB97} etc.  In the model, the elastic body of the material under consideration is represented as a bounded domain with a Lipschitzian boundary in a two- or three-dimensional Euclidean space depending on the design requirement.  For computational purpose, the domain is discretized into $m$ finite elements: $\Omega = (\Omega_1, \dots, \Omega_m)$ so that all the points in the same element are considered to have the same property.

Let $u(x)$ denote the \textit{displacement vector} of the body at point $x$ under load. 
Denote the \textit{(small-)strain tensor} as:
$$
e_{ij}\left(u(x)\right) \defeq \frac{1}{2} \left( \frac{\partial u(x)_i}{\partial x_j} + \frac{\partial u(x)_j}{\partial x_i} \right) .
$$ 
Let $\sigma_{ij}(x)$ ($i, j=1, \dots, 3$) denote the \textit{stress tensor}.
The system is assumed to follow the Hooke's law\textemdash the stress is a linear function of the strain:
$$
\sigma_{ij}(x) = E_{ijkl}(x)e_{kl}\left(u(x)\right)  \quad \text{(in tensor notation)},
$$
where $E$ is the \textit{(plain-stress) elasticity tensor} of order $4$, which maps the strain to the stress tensor.  The matrix $E$ measures the degree of deformation of a material under external loads and is a symmetric positive semidefinite matrix of order $3$ for the 2-dimensional and of order $6$ for the $3$-dimensional material design problem.
The diagonal elements of $E(x)$ measure the stiffness of the material at $x$ in the coordinate directions.  Hence the trace of $E$ is used to measure the cost (resource used) of a material in the model.

Denote by $I_k$ the identity matrix of order $k$ and $S_k^{+m}$ the direct product of $m$ cones of symmetric positive semidefinite $k\times k$-matrices:
$$
\ba{rcl} S_k^{+m} & = & \underbrace{S_k^+ \times \dots \times
S_k^+}_{m \text{ times}} \ea .
$$
For a $k \times k$ symmetric matrix $M$, let $M \succeq 0$ denote $M \in S_k^+$.

Let $E_i$ denote the elasticity tensor of order $4$ for the $i$th element $\Omega_i$:
The $E_i$'s are considered to be constant on each $\Omega_i$ but can be different for different $\Omega_i$'s and are the design variables of the FMO model:
$$\ba{rcl}
E \; = \; (E_1, \dots, E_m), & & E_i \; \succeq \; 0 , \; i = 1,
\dots, m. \ea
$$

The design problem is to find a structure that is low `cost' (the tensor $E$ having small trace) and is stable under given multiple independent loads (forces).
There are some different formulas of the FMO problem depending on the design needs.
This paper focuses on the \textit{minimum-cost FMO} problem which is to design a material structure that can withstand a whole given set of loads in the worst-case scenario and the trace of $E$ is minimal. Below we describe the model based on~\cite{KSZ08}.

The ``cost"\textemdash stiffness of the material\textemdash is measured by the trace of $E$: $\tr(E) = \sum_{i=1}^m \tr(E_i) =  \la I_k, E \ra$.  
For each $i \in \{1, \dots, m\}$, $\tr(E_i)$ is lower bounded to avoid singularity in the FMO model.
The constraints for the point-wise stiffness upper and lower bounds are:
$$ \tr{E_i} \leq \rho_u^{(i)}, \qquad   \tr{E} \geq \rho_L^{(i)}.  $$

From the engineering literature the dynamic stiffness of a structure can be improved by raising its fundamental eigenfrequency. Thus we have a lower bound on its eigen values:
$$\lambda_{\min} (E) \geq r . $$

Let $n$ be the number of nodes (vertices of the elements).
Let $nig$ denote the number of Gauss integration points in each element.
In every element, the displacement vector $u(x)$ is approximated as a continuous function which is linear in every coordinate:
$$
u(x) = \sum_{i=1}^n u_i \vartheta_i (x),
$$
where $u_i$ is the value of $u$ at the $i$th node, and $\vartheta_i$ is the basis function associated with the $i$th node.  For $\vartheta_j$, define matrices
$$
\hat{B}_j \, \defeq \, 
\begin{pmatrix} \frac{\partial \vartheta_j}{\partial x_1} & 0  \\[1ex]
0 & \frac{\partial \vartheta_j}{\partial x_2} \\[1ex]
\frac{1}{2}\frac{\partial \vartheta_j}{\partial x_2} &
\frac{1}{2}\frac{\partial \vartheta_j}{\partial x_1}
\end{pmatrix} \; (\text{for 2-dimension}), 
\qquad
\hat{B}_j \, \defeq \, 
\begin{pmatrix} \frac{\partial \vartheta_j}{\partial x_1} & 0  & 0 \\[1ex]
0 & \frac{\partial \vartheta_j}{\partial x_2} & 0 \\[1ex]
0 & 0 & \frac{\partial \vartheta_j}{\partial x_3} \\[1ex]
\frac{1}{2}\frac{\partial \vartheta_j}{\partial x_2} &
\frac{1}{2}\frac{\partial \vartheta_j}{\partial x_1} & 0 \\[1ex]
0 & \frac{1}{2}\frac{\partial \vartheta_j}{\partial x_3} &
\frac{1}{2}\frac{\partial \vartheta_j}{\partial x_2} \\[1ex]
\frac{1}{2}\frac{\partial \vartheta_j}{\partial x_3} & 0 &
\frac{1}{2}\frac{\partial \vartheta_j}{\partial x_1}
\end{pmatrix} \; (\text{for 3-dimension}). 
$$
For $\Omega_i$, let $B_{i,k}$ be the block matrix whose $j$th block is $\hat{B}_j$ evaluated at the $k$th integration point and zero otherwise.
The full dimension of $B_{i,k}$ is $3 \times 2n$ for the $2$-dimensional case and $6 \times 3n$ for the $3$-dimensional case.

Let $A(E)$ denote the \textit{stiffness matrix} relating the forces to the displacements;
Let $A_i \in \mathbb{R}^{N \times N}$ denote the \textit{element stiffness matrices}:
$$ A(E) \defeq \sum_{i=1}^m A_i(E), \qquad A_i(E) = \sum_{k=1}^{nig} B_{i,k}^\top E_i B_{i, k} . $$

Since the material obeys Hooke's law, forces (loads) on each element, denoted as $f_j \; (j=1, \dots L)$, are linearly related to the displacement vector:  
\begin{equation} \label{eq:FAu}  f_j = A(E)u \qquad j = 1, \dots L. \end{equation}
The system is in equilibrium for $u$ if outer and inner forces balance each other.  The equilibrium is measured by the \textit{compliances} of the system: the less the compliance, the more rigid the structure with respect to the loads.
The compliance can be represented as:
$$  f_j^\top u. $$
In the minimum-cost FMO model, an upper bound $\gamma > 0$ is imposed on the compliances.  Further in view of equation~\eqref{eq:FAu}, we have
$$\langle A(E)^{-1} f_j, f_j \rangle \leq \gamma, \quad j = 1, \dots L. $$

In summary, with given loads $f_j, (j=1, \dots L)$, imposed upper and lower bounds $\rho_l^{(i)}$ and $\rho_u^{(i)}$ $(i=1, \dots m)$, $r$, and compliance upper bound $\gamma$,   
 the minimum-cost multiple-load material design problem is the following:
\begin{equation} \label{MLSDP}
\ba{rl}
\min\limits_{E \in S_k^{+m}} & \sum\limits_{i=1}^m \la I_k, E_i \ra \\
\ST & \rho_l^{(i)} \; \leq \la I_k, E_i \ra \; \leq \rho_u^{(i)} ,
\; i = 1, \dots, m, \\
& \la A(E)^{-1} f_j, f_j \ra \; \leq \; \gamma, \; j = 1, \dots, L \\
&  \lambda_{\min}(E) \geq r .
 \ea
\end{equation}
Some optimization approaches have been applied to FMO; for instance,
Zowe et al.\! \cite{ZKB97} formulate the multiple-load FOM as a $\max$-$\min$ convex program.  They propose penalty/barrier multiplier methods and interior-point methods for the problem.
Ben-Tal et al.\! \cite{MR1724765} consider bounded trace minimum compliance multiple-load FMO problem.  They formulate the problem as a semidefinite program and solve the problem by an interior-point method. 
Stingl et al.\! \cite{SKL09} solve the minimum compliance multiple-load FMO problem by a sequential semidefinite programming algorithm.
Weldeyesus and Stolpe \cite{WS15} propose a primal-dual interior point method to several equivalent FMO formulations.
Stingl et al.\! \cite{SKL092} study minimum compliance single-load FMO problem with vibration constraint and propose an approach to the problem based on nonlinear semidefinite low-rank approximation of the semidefinite dual.
Haslinger et al.\! \cite{HKLS10} extend the original problem statement by a class of generic constraints.
Czarnecki and Lewi\'{n}ski \cite{CL14} deal with minimization of the weighted sum of compliances related to the non-simultaneously applied load cases.
All of them are second-order methods.  To our knowledge, no first-order methods have been employed to FMO.  

Second-order methods exploit the information of Hessians in addition to gradients and function values.  Thus, compared with first-order methods, second-order methods generally converge faster and are more accurate; on the other hand, first-order methods don't require formulation, storage, and inverse of Hessian and thus can be applied to large-scale problems. 
For certain structured problems with bounded simple feasible sets, Nesterov~\cite{MR2166537} showed that the complexity of fast gradient methods is one magnitude lower than the theoretical lower complexity bound of the gradient-type method for the black-box oracle model.  After that work, there appears quite a lot of papers on fast gradient-type methods, such as \cite{MR2112984, Nes07, NesDualExtra, Nes11, Nes13, DGN14, MR2782122, Nes15, Xia15}.

However, not every real-world problem is suitable for second-order methods or fast gradient-type methods;  for instance, when the structure of the problem is too complex to apply the interior-point method or the smoothing technique to.
The minimum weight FMO model~\eqref{MLSDP} is such a case.  For the model, although the matrices $B_{i,l}$ are sparse, $A(E)$ is generally not.  The number $m$ is at least thousands; and $n$ is smaller than $m$ only by a constant factor.  To roughly measure the amount of work per iteration, we use flops, i.e. floating point operations, such as arithmetic operations ($+, - , *, /, \sqrt{\cdot}, \frac{1}{\cdot} $), comparisons and exchanges. It takes a vector of length $\frac{N(N+1)}{2}$ to store the matrix $A(E)$ or its Cholesky factor in the memory, and about  $\big[ (k+\frac{1}{2})nig \cdot mN^2 + \frac{1}{2}N^3 \big]$ flops to evaluate $\la A(E)^{-1} f_j, f_j \ra$.  Hence, it is difficult to manage model~\eqref{MLSDP} of reasonable size by second-order methods, since second-order methods work on the Hessian of the problem whose size is at least the square of total variables.  And the variables of model~\eqref{MLSDP} are $m$ matrices of size $k \times k$.  In addition, the constraints of model~\eqref{MLSDP} are not simple, which prevents us from applying usual gradient-project type methods to it, because it is not easy to project onto its feasible set.

In this paper, we reformulate model~\eqref{MLSDP} into a saddle-point problem and apply the primal-dual subgradient method \cite{Nes07} to the saddle-point problem.  The advantage of our formulation is that the inverse or the Cholesky factorization of $A(E)$ doesn't need to be calculated; thus reduce the computational cost of each iteration to just $\mathcal{O}(N^2)$. 

The traditional subgradient method for minimizing a nonsmooth convex function $F$ over the Euclidean space employs a pre-chosen sequence of steps $\{\lambda_k\}_{k=0}^{\infty}$ which satisfies the divergent-series rule:
$$ 
\lambda_k > 0, \qquad \lambda_k \rightarrow 0, \qquad \sum_{k=0}^{\infty} \lambda_k = \infty. 
$$
The iterates are generated as follows:
$$ g_k \in \partial F(x_k), \qquad x_{k+1} = x_k - \lambda_k g_k, \qquad k \geq 0 .
$$
In the traditional subgradient method, new subgradients enter the model with decreasing weights, which contradicts to the general principle of iterative scheme\textemdash new information is more important than the old one. But the vanishing of steps is necessary for the convergence of the iterates $\{x_k\}_{k=0}^{\infty}$.

The primal-dual subgradient method~\cite{Nes07} associates the primal minimization sequence with a master process in the dual space;  it doesn't have the drawback of diminishing step sizes in the dual space;
the method is proven to be optimal for saddle-point problems, nonsmooth convex minimization, minimax problems, variational inequalities, and stochastic optimization.
Let $\mathcal{E}$ be a finite dimensional real vector space equipped with a norm $\| \cdot \|$.  Let $\mathcal{E}^*$ be its dual.  Let $Q \in \mathcal{E}$ be a closed convex set.  Let $d(x)$ be a prox-function of $Q$ with convexity parameter $\sigma \geq 0$:  $\forall \, x, y \in Q$, $\forall \alpha \in [0, 1]$:
$$
d\left( \alpha x + (1-\alpha)y \right) \leq \alpha d(x) + (1-\alpha) d(y) - \frac{1}{2} \sigma \alpha(1-\alpha) \left\| x - y \right\|^2 .
$$
Let $\mathcal{G}$ be a function mapping $\mathcal{E}$ to $\mathcal{E}^*$.  For instance, for the convex minimization problem, the function $\mathcal{G}$ can be a subgradient of the objective function.
The generic scheme of dual averaging (DA-scheme)~\cite{Nes07} works as below:

\noindent\rule{0.8\paperwidth}{.4pt}
\begin{description}
\item[\textit{Initialization:}] Set $s_0 = 0 \in \mathcal{E}^*$.  Choose $\beta_0 > 0$.

\item[\textit{Iteration}] ($k \geq 0$):
\begin{enumerate}
\item  Compute $g_k = \mathcal{G}(x_k)$.
\item
Choose $\lambda_k > 0$.  Set $s_{k+1} = s_k + \lambda_k g_k$.
\item
Choose $\beta_{k+1} \geq \beta_k$.  
Set  $ x_{k+1} = \arg \min_{x \in Q} \left\{\langle s_{k+1}, x \rangle + \beta_{k+1} d(x) \right\}$.
\end{enumerate}
\end{description}
\noindent\rule{0.8\paperwidth}{.4pt} \\

Let
$$ \hat{\beta}_0 = \hat{\beta}_1, \quad \hat{\beta}_{i+1} = \hat{\beta}_{i} + \frac{1}{\hat{\beta}_{i}} , \quad i \geq 1 .$$

The scheme has two main variants: simple averages where $\lambda_k = 1$ and $\beta_k = \gamma \hat{\beta}_k$ with constant $\gamma >0$ , and weighted averages where $\lambda_k = 1/\|g_k \|_*$ and $\beta_k = \frac{\hat{\beta}_k}{\rho \sqrt{\sigma}}$ with constant $\rho > 0$.

There are some other gradient methods for saddle-point problems.  In \cite{MR2782122}, Chambolle and Pock study a first-order primal-dual algorithm for a class of saddle-point problems in two finite-dimensional real vector spaces $\mathcal{E}$ and $\mathcal{V}$:
$$
\min_{x \in \mathcal{E}} \max_{y \in \mathcal{V}} \langle K x, y \rangle + G(x) - T^*(y),
$$
where $K \colon \mathcal{E} \rightarrow \mathcal{V}$ is a linear operator, and $G$ and $T^*$ are proper convex, lower-semicontinuous functions.  
That algorithm, as well as the classical Arrow-Hurwicz method~\cite{MR0108399} and its variants for saddle-point problems, is not applicable to our FMO formulation, because in our formulation the function between two spaces is nonlinear.
Nemirovski's prox-method~\cite{MR2112984} reduces the problem of approximating a saddle-point of a $C^{1,1}$ function to that of solving the associated variational inequality by a prox-method.  The approach is not applicable to our FMO formulation, because the structure of our FMO formulation is not simple enough and its objective function is not in $C^{1,1}$.

In our approach, the inverse of $A(E)$ in model~\eqref{MLSDP} doesn't need to be calculated, which decreases computational cost per iteration by one magnitude.
Solutions of the primal and dual subproblems at each iteration can be written in closed-form.  
Each iteration takes roughly $(6k\cdot nig \cdot L)mN$ flops.  And the auxiliary storage space is linear in $N$. 
Furthermore, since the primal subproblem is decoupled into $m$ small problems that can be solved in parallel. And each small problem can be solved in approximately $(10k^3)$ flops.
Thus, it is possible to work on large-scale problems, compared with second-order methods dealing with the Hessian of $6m$ or $21m$ variables plus additional constraints on the $m$ matrices.
To prove the efficiency of the algorithm, we give iteration complexity bounds of our algorithm, which includes \textit{simple dual averaging} and \textit{weighted dual averaging} schemes.  The complexity bounds are optimal for the general subgradient methods.
Numerical experiments are described at the end of the paper.

The remainder of the paper is organized as follows.  In Section \ref{S:saddle}, we give our saddle-point form of the problem. 
In Section \ref{S:bounds}, we show that a solution to our bounded Lagrangian form either solves the original problem or gives an approximate solution of the original problem.
In Section \ref{S:algo}, we present our algorithm. 
In Section \ref{S:subsolution}, we give closed-form solutions to the subproblems.
In Section \ref{S:boundd}, we derive complexity bounds of our algorithm.  In section \ref{S:computing}, we present some computational examples of our algorithm.
In Section \ref{S:penalty}, we describe and analyze a penalized lagrangian approach.
In the Appendixes, we give a closed-form solution of a related matrix projection problem and an update scheme for the parameters of the algorithm.

\section{Saddle-Point Formulation} \label{S:saddle}
We first rewrite problem \eqref{MLSDP} in a saddle-point form.
Denote
\begin{equation} \label{def:Qk}
 Q_k^{(i)} \defeq \left\{ U \in S_k \colon \lambda_{\min}(U) \geq
r, \; \rho_l^{(i)} \leq \la I_k, U \ra \leq \rho_u^{(i)} \right\} . 
\end{equation}

The second group of constraints in \eqref{MLSDP} can be represented in max form:
$$
\ba{rcl}
\gamma \; \geq \; \la A(E)^{-1} f_j, f_j \ra & = & \max\limits_{y_j \in \mathbb{R}^N} \left\{ 2\la f_j, y_j \ra  - \la A(E) y_j, y_j \ra \right\} .
\ea
$$

Assume that problem~\eqref{MLSDP} satisfies some constraint qualifications, such as the Slater condition\textemdash there exists $\hat{E}$ such that $\langle A(\hat{E})^{-1} f_j, f_j \rangle < \gamma$ for $j=1, \dots L$.   Then a Lagrangian multiplier exists; and we can solve the Lagrangian of problem~\eqref{MLSDP} instead.  Thus, problem \eqref{MLSDP} can be written as follows:
$$
\ba{rcl}
& & \min\limits_{\stackrel{E_i \in Q_k^{(i)}}{ k=1, \dots, m}} \max\limits_{\stackrel{y_j \in \mathbb{R}^N, \lambda_j \geq 0}{j=1, \dots, L}}
\left\{
\sum\limits_{i=1}^m \la I_k, E_i \ra  + \sum\limits_{j=1}^L  \lambda_j \cdot \left[ 2 \la f_j, y_j \ra - \la A(E) y_j, y_j \ra - \gamma \right] \right\} \\
\\
& \stackrel{\lambda_j y_j \rightarrow x_j}{=} &
\min\limits_{\stackrel{E_i \in Q_k^{(i)}}{ k=1, \dots, m}}  \left\{
\sum\limits_{i=1}^m \la I_k, E_i \ra  +  \max \left( 0, 
\max\limits_{\stackrel{x_j \in \mathbb{R}^N, \lambda_j > 0}{j=1, \dots, L}}
\sum\limits_{j=1}^L  \left[ 2 \la f_j, x_j \ra - \frac{1}{\lambda_j} \la A(E) x_j, x_j \ra - \gamma \lambda_j \right] \right) \right\} \\
\\
& = & 
\min\limits_{\stackrel{E_i \in Q_k^{(i)}}{ k=1, \dots, m}}
\max\limits_{\stackrel{x_j \in \mathbb{R}^N,}{j=1, \dots, L}}
\left\{
\sum\limits_{i=1}^m \la I_k, E_i \ra  + \sum\limits_{j=1}^L  2 \left[ \la f_j, x_j \ra - \gamma^{1/2} \la A(E) x_j, x_j \ra^{1/2} \right] \right\} .
\ea
$$
The dimension of the matrix $A(E)$ is large; the first transformation eliminates the need of calculating its inverse, but that results in a nonconcave objective function in $\lambda$ and $y$.  The second transformation makes the function concave in $\lambda$ and $x$.  In the last step, variable $\lambda$ is eliminated to simplify the formulation.

Denote
$$
\ba{rcl} F(E, x) & \defeq & \sum\limits_{i=1}^m \la I_k, E_i \ra + \sum\limits_{j=1}^L 2 \left[ \la f_j , x_j \ra - \gamma^{1/2} \la A(E) x_j, x_j \ra^{1/2} \right]. \ea
$$
Thus, to solve problem~\eqref{MLSDP}, we only need to solve
\begin{equation} \label{UBSPF}
\min\limits_{\stackrel{E_i \in Q_k^{(i)}}{i=1, \dots, m}} \max\limits_{\stackrel{x_j \in \mathbb{R}^N,}{j=1, \dots, L} } F(E, x).
\end{equation}
Note that $F(E,x)$ is convex in $E$ and concave in $x \in \mathbb{R}^{L \times N}$.

\section{Bounded Lagrangian} \label{S:bounds}
We apply the primal-dual subgradient method~\cite{Nes07} to the saddle-point formulation \eqref{UBSPF}.
The convergence of the algorithm requires the iterates be uniformly bounded \cite{Nes07}.  We therefore impose a bound on $x$: 

\begin{equation} \label{BSPF}
\min\limits_{\stackrel{E_i \in Q_k^{(i)}}{i=1, \dots, m}} \max\limits_{\stackrel{\|x_j\| \leq \eta,}{j=1, \dots, L} } F(E, x).
\end{equation}
Next we show that the primal solution of the saddle-point problem~\eqref{BSPF} is either a solution to the original problem \eqref{MLSDP} or an approximate solution in the sense that its constraint-violation is bounded by $\eta^{-1}$ and its objective value is smaller than that of the optimal value of \eqref{MLSDP}.

Let $(E^*, x^*)$ be a solution to the saddle-point problem \eqref{UBSPF}; then for any $\alpha \geq 0$, $(E^*, \alpha x^*)$ is also its solution.
We can choose $\alpha^*$ small enough; for instance, let $\alpha^* = \eta/ \max\{ \|x_j^*\|, 1 \}$, so that $(E^*, \alpha^* x^*)$ is a solution to the bounded saddle-point form \eqref{BSPF}.

For any $E \in Q_k$, denote the index set of its violated constraints as
$$
\ba{rcl}
W_E & \defeq & \left\{ 1 \leq j \leq L \colon \la f_j, A(E)^{-1} f_j \ra > \gamma  \right\} .
\ea
$$
Denote $F(E) \defeq \max_x F(E, x)$.
We have the following results regarding our material design problem.

\begin{lemma} \label{lm:bdL}
Let $(\tilde{E}, \tilde{x})$ be a solution of \eqref{BSPF}. Let $F^*$ be the optimal value of \eqref{MLSDP}.
\begin{enumerate}
\item \label{item:EInt}
If $\|\tilde{x}_j\| < \eta$ for $j=1, \dots, L$; then $\tilde{E}$ is a solution to \eqref{MLSDP}.
\item \label{item:EBd}
Otherwise, $\tilde{E}$ has the following properties:
\begin{enumerate}
\item \label{item:ObjLess}
$ F(\tilde{E}) \leq F^*$.
\item \label{item:ConstrainBound}
$\sum\limits_{j \in W_{\tilde{E}}} \left( \la f_j, A(\tilde{E})^{-1} f_j \ra^{1/2} - \gamma^{1/2} \right) \leq \frac{F^* - m\rho_l}{2r \lambda_{\min}(BB^T)\eta}$ .
\end{enumerate}
\end{enumerate}
\end{lemma}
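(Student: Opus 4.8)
The plan is to reduce everything to two facts about the inner maximization. For fixed $E$ the per-load objective $x_j \mapsto \langle f_j, x_j\rangle - \gamma^{1/2}\langle A(E) x_j, x_j\rangle^{1/2}$ is concave and positively homogeneous of degree one, and, by Cauchy--Schwarz in the inner product induced by the (positive definite) matrix $A(E)$, it is nonpositive for all $x_j$ exactly when $\langle A(E)^{-1} f_j, f_j\rangle \le \gamma$ and attains strictly positive values exactly when load $j$ is violated. First I would record the consequences: the unbounded inner maximum equals $\sum_i \langle I_k, E_i\rangle$ for $E$ feasible in \eqref{MLSDP} and $+\infty$ otherwise, so \eqref{UBSPF} shares the optimal value $F^*$ with \eqref{MLSDP}; on the ball $\|x_j\|\le\eta$ the maximizing $\tilde x_j$ is $0$ when load $j$ is satisfied and lies on the sphere $\|\tilde x_j\|=\eta$ when it is violated; and, because shrinking the feasible set of the inner $\max$ can only lower the value, the optimal value of \eqref{BSPF} is at most that of \eqref{UBSPF}, i.e. at most $F^*$. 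This last inequality is already item~\ref{item:ObjLess}.

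For part~\ref{item:EInt}, suppose $\|\tilde x_j\|<\eta$ for every $j$. By the homogeneity dichotomy a strictly interior maximizer is possible only when every load is satisfied, so $\tilde E$ is feasible for \eqref{MLSDP} and the inner maximum at $\tilde E$ equals $\sum_i\langle I_k,\tilde E_i\rangle$; hence the optimal value of \eqref{BSPF} equals $\sum_i\langle I_k,\tilde E_i\rangle$. Feasibility forces $\sum_i\langle I_k,\tilde E_i\rangle \ge F^*$, while the restriction inequality forces it to be $\le F^*$, so equality holds and $\tilde E$ solves \eqref{MLSDP}.

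The substance is part~\ref{item:ConstrainBound}, where at least one load is violated. Starting from the identity that the optimal value of \eqref{BSPF} equals $\sum_i\langle I_k,\tilde E_i\rangle + 2\sum_j [\langle f_j,\tilde x_j\rangle - \gamma^{1/2}\langle A(\tilde E)\tilde x_j,\tilde x_j\rangle^{1/2}] \le F^*$, I would drop the satisfied loads (each bracket is $0$) and use $\sum_i\langle I_k,\tilde E_i\rangle \ge m\rho_l$, which holds because $\tilde E_i\in Q_k^{(i)}$ gives $\langle I_k,\tilde E_i\rangle \ge \rho_l^{(i)}$. This yields $2\sum_{j\in W_{\tilde E}}[\langle f_j,\tilde x_j\rangle - \gamma^{1/2}\langle A(\tilde E)\tilde x_j,\tilde x_j\rangle^{1/2}] \le F^* - m\rho_l$, and it remains to bound each violated bracket below by $r\lambda_{\min}(BB^T)\eta(\langle f_j,A(\tilde E)^{-1}f_j\rangle^{1/2} - \gamma^{1/2})$. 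To do this I would evaluate the inner objective at the feasible test point $x = \eta\, A(\tilde E)^{-1}f_j/\|A(\tilde E)^{-1}f_j\|$, where the bracket simplifies in closed form, and then invoke the uniform ellipticity estimate $A(\tilde E) = \sum_{i,k} B_{i,k}^\top \tilde E_i B_{i,k} \succeq r\sum_{i,k} B_{i,k}^\top B_{i,k}$, a direct consequence of $\tilde E_i \succeq r I_k$, whose least eigenvalue is denoted $\lambda_{\min}(BB^T)$, to replace the resulting Rayleigh quotient by the stated constant.

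I expect the main obstacle to be exactly this per-load lower bound: selecting a test direction that is simultaneously admissible for the $\eta$-ball and aligned closely enough with the true boundary maximizer $\tilde x_j$ to recover the violation amount $\langle f_j,A(\tilde E)^{-1}f_j\rangle^{1/2} - \gamma^{1/2}$, while controlling the Rayleigh quotient $\langle A(\tilde E)^{-1}f_j, f_j\rangle^{1/2}/\|A(\tilde E)^{-1}f_j\|$ from below through the ellipticity bound. The positive homogeneity that makes the inner maximization transparent is precisely what keeps this estimate clean; the delicate point is matching the exact constant $r\lambda_{\min}(BB^T)$ in the denominator, which depends on how tightly the chosen direction tracks the extremal $A(\tilde E)$-Rayleigh direction. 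The rest of the argument is routine manipulation of the restriction inequality and the trace lower bound.
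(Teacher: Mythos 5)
Your proposal is correct and follows essentially the same route as the paper: item~\ref{item:ObjLess} from the restriction inequality $\max_{\|x_j\|\le\eta}F \le \max_{x_j}F$, and item~\ref{item:ConstrainBound} by evaluating the inner maximum at the test point $x_j=\eta A(\tilde E)^{-1}f_j/\|A(\tilde E)^{-1}f_j\|$ for $j\in W_{\tilde E}$, combined with $\la I,\tilde E\ra\ge m\rho_l$ and the ellipticity bound on the Rayleigh quotient $\la f_j,A(\tilde E)^{-1}f_j\ra^{1/2}/\|A(\tilde E)^{-1}f_j\|$. The only additions are your explicit homogeneity dichotomy for item~\ref{item:EInt} (which the paper dismisses as obvious) and your caveat about the constant $r\lambda_{\min}(BB^T)$, which matches the constant the paper itself uses at the corresponding step.
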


\begin{proof}
Item~\ref{item:EInt} is obvious as the constraints are non-binding.

Next, we prove item~\ref{item:EBd}.

Because
$$
\max\limits_{\stackrel{ \|x_j\| \leq \eta}{j=1, \dots, L} } F(E, x) \leq
\max\limits_{\stackrel{x_j \in \mathbb{R}^N,}{j=1, \dots, L} } F(E, x), 
$$
we have item (\ref{item:ObjLess}).
For any fixed $E \in Q$, the point
$$
\ba{rcl}
x_j & = & \begin{cases} \frac{\eta}{\|A(E)^{-1}f_j\|}A(E)^{-1}f_j & j \in W_E \\
0 & j \notin W_E \end{cases}
\ea
$$
is feasible to
$$
\max\limits_{\stackrel{\|x_j\| \leq \eta,}{ j=1, \dots, L} }F(E, x)
$$
with objective value
\begin{equation} \label{optx}
F_x(E)  \defeq  \la I, E \ra + 2\sum\limits_{j \in W_E} \left( \la f_j, A(E)^{-1} f_j \ra^{1/2} - \gamma^{1/2} \right) \frac{\la f_j, A(E)^{-1} f_j \ra^{1/2} }{\|A(E)^{-1}f_j\|} \eta .
\end{equation}

Since
$$
\ba{rcl}
\la f_j, A(E)^{-1} f_j \ra & = & \la A(E)^{-1}f_j, A(E)(A(E)^{-1} f_j) \ra ,
\ea
$$

we also have
$$
\ba{rcl}
\frac{\la f_j, A(E)^{-1} f_j \ra^{1/2} }{\|A(E)^{-1}f_j\|}  & \geq & \lambda_{\min} A(E)^{1/2} \geq r \lambda_{\min}(BB^T) , \\
\text{and}\\
\la I, E \ra & \geq & m \rho_l .
\ea
$$

Therefore,
$$
\ba{rcl}
F(\tilde{E}, \tilde{x}) & \geq & m \rho_l + 2r\lambda_{\min}(BB^T) \eta \sum_{j \in W_{\tilde{E}}} \left( \la f_j, A(\tilde{E})^{-1} f_j \ra^{1/2} - \gamma^{1/2} \right) ,
\ea
$$

By item~(\ref{item:ObjLess}) of the lemma, we have
$$
\ba{rcl}
F(\tilde{E}, \tilde{x}) & \leq & F^* .
\ea
$$
Item~(\ref{item:ConstrainBound}) then follows.
\end{proof}
Note that as $\eta \rightarrow + \infty$, the set of saddle-points of \eqref{BSPF} approaches that of the original problem.

\section{The Algorithm} \label{S:algo}
In this part, we describe how to apply the primal-dual subgradient method~\cite{Nes07} to the saddle-point reformulation of model~\eqref{MLSDP}.
We have developed a parameter update scheme for the algorithm, which is included in the Appendix.

For a matrix $V$, let vector $\lambda(V)$ denote the eigenvalues of $V$;
let $\lambda_{\min}(V)$ be the smallest eigenvalue of $V$.
The gradient (subgradients) of $F(E, x)$ at $(E, x)$ are: for $i=1, \dots, m$, $ j=1. \dots, L$,
$$
\ba{rcl}
g_{E_i}(E, x) & = &
 I_k - \sqrt{\gamma} \sum\limits_{j \in R} \la A(E)x_j, x_j \ra^{-1/2}\left(\sum\limits_{l=1}^{nig} B_{i,l}x_jx_j^TB_{i,l}^T\right), \\
 & & \text{where }R \defeq \left\{ 1 \leq l \leq L \colon \la A(E)x_l, x_l \ra > 0\right\} ;\\
\\
g_{x_j}(E, x) & =  &
\begin{cases} 2f_j - 2\sqrt{\gamma} \la A(E)x_j, x_j \ra^{-1/2}A(E)x_j  & \la A(E)x_j, x_j \ra > 0 \\
\left\{2f_j - 2\sqrt{\gamma}A(E)y \colon \la A(E)y, y \ra = 1 \right\} & \la A(E)x_j, x_j \ra = 0 .
\end{cases}
\ea
$$
For the primal space, we choose the standard Frobenius norm:
$$
\|E\|_F^2 \, = \, \sum_{i=1}^m \|E_i\|_F^2 \, = \, \tr(E^2), \qquad
d(E) \, = \, \frac{1}{2} \sum_{i=1}^m \|E_i - rI_k \|_F^2 .
$$
For the dual space, we choose the standard Euclidean norm:
$$
\|x\|_2^2 \, = \, \sum_{j=1}^L \|x_j\|_2^2 \, = \, x^Tx, \qquad d(x)
\, = \, \frac{1}{2} \sum_{j=1}^L \|x_j\|^2_2 .
$$

Their dual norms are denoted as:
$\| \cdot \|_{F,*} = \| \cdot \|_F$ , $\| \cdot \|_{2,*} = \| \cdot \|_2$ .

The set $Q_k^{(i)}$ for $E_i$ is defined in \eqref{def:Qk}; and the set $Q_x$ for $x_j$ is
$$
\ba{rcl}
Q_x & = & \left\{ x_j \in \mathbb{R}^N \colon \|x_j\| \leq \eta \right\} . 
\ea
$$

Note that $F$ is nonsmooth.  The primal-dual subgradient method~\cite{Nes07} for saddle-point problems~\eqref{BSPF} works as follows.\\

\begin{tabular}{l} \hline
Initialization: Set $s_0^{E_i} = 0 \, (i=1, \dots, m)$ , $s_0^{x_j} = 0 \, (j=1, \dots, L)$. \\
\qquad Choose $\beta_0 > 0$, $0 < \tau < 1$. \\ \hline
Iteration $t = 0, 1, \dots$ \\
1. Compute $g_{E_i}^{(t)}(E^{(t)}, x^{(t)})$, $g_{x_j}^{(t)}(E^{(t)}, x^{(t)})$, for $i=1, \dots, m; j=1, \dots, L$. \\
2. Choose $\alpha_t > 0$,
set \\
\begin{tabular}{c}
$
s_{t+1}^{E_i} = s_t^{E_i} + \alpha_t g_{E_i}^{(t)} \, (i=1, \dots, m) , \quad
s_{t+1}^{x_j} = s_t^{x_j} - \alpha_t g_{x_j}^{(t)} \, (j=1, \dots, L) .
$
\end{tabular}
\\
3. Choose $\beta_{t+1} \geq \beta_t $, set \\
\begin{tabular}{c}
$
E^{(t+1)} = \arg \min\limits_{E_i \in Q_k^{(i)}} \left\{ \la s_{t+1}^E, E \ra + \beta_{t+1}\tau d_E(E) \right\} ,
$
\end{tabular} \\
\begin{tabular}{c}
$
x^{(t+1)} = \arg \min\limits_{x_j \in Q_{x}} \left\{ \la s_{t+1}^x, x \ra + \beta_{t+1}(1-\tau) d_{x}(x) \right\} .
$
\end{tabular}
\\
\hline
Output: $\hat{E}^{(t+1)} = \frac{1}{\sum_{l=0}^t \alpha_l} \sum_{l=0}^t \alpha_l E^{(l)}$.\\
\hline
\end{tabular}
\\
Details of a parameter update scheme for $\beta_t$ is given in the Appendix.

We take
$$
\ba{c}
\hat{\beta}_0 \; = \; \hat{\beta}_1 \; = \; 1, \quad  \hat{\beta}_{t+1} \; = \; \hat{\beta}_t + \frac{1}{\hat{\beta}_t}, \; t \; = \; 1, \dots \\
\\
\beta_t \; = \; \sigma \hat{\beta}_t , \; t \; =  \; 0, \dots .
\ea
$$

Based on different choices of $\alpha$, there are two variants of the algorithm:
\begin{enumerate}
\item{Method of Simple Dual Averages}

We let
$$
\ba{rcl}
\alpha_t & = & 1 , \;  t = 0, \dots .
\ea
$$

\item{Method of Weighted Dual Averages}

We let
$$
\ba{rcl}
\alpha_t & = & 1/ \left( \frac{\|g_E^{(t)}\|_{F,*}^2}{\tau} + \frac{\|g_x^{(t)}\|_{2,*}^2}{1-\tau}  \right)^{1/2} , \; t = 0, \dots .
\ea
$$

\end{enumerate}

\section{Solution to the Subproblem} \label{S:subsolution}
In this part, we give closed-form solutions to the subproblems at each iteration of our algorithm.

\paragraph{Solution of $x$.}
The closed-form solution for $x^{(t+1)}$ in \textit{Step 3} of the algorithm is derived as below.

By Cauchy-Schwartz-Boniakovsky inequality, for $j=1, \dots, L$:

$$
\ba{rcl}
&& \la s_{t+1}^{x_j}, x_j \ra + \beta_{t+1}(1-\tau) d_{x_j}(x_j) \\
\\
& \geq & - \|s_{t+1}^{x_j}\|_{2,*} \cdot \|x_j\|_2 + \frac{\beta_{t+1}(1-\tau)}{2}\|x_j\|_2^2 \\
\\
& = & \frac{\beta_{t+1}(1-\tau)}{2}\left( \|x_j\|_2 -
\frac{1}{\beta_{t+1}(1-\tau)}\|s_{t+1}^{x_j}\|_{2,*} \right)^2 -
\frac{1}{2\beta_{t+1}(1-\tau)}\|s_{t+1}^{x_j}\|_{2,*}^2 , \ea
$$
with equality if{f} $x_j = - \nu s_{t+1}^{x_j}$ for some $\nu \geq 0$.  Therefore,
\begin{equation} \label{eq:solutionx}
x^{(t+1)}_j  = 
- \min \left( \frac{ \eta }{\|s_{t+1}^{x_j}\|_{2,*}}, \frac{1}{\beta_{t+1}(1-\tau)} \right) s_{t+1}^{x_j} .
\end{equation}

\paragraph{Solution of $E$.}
For a set $M$, let $|M|$ denote the cardinality of $M$; i.e., the number of elements in $M$.
In \textit{Step 3} of the algorithm, $E^{(t+1)}_i$ can be seen as the projection
$$
\min\limits_{V \in Q_k^{(i)}} \quad \| V + \frac{1}{\beta_{t+1}\tau} s_{t+1}^{E_i} - rI \|_F^2 .
$$

By Theorem~\ref{thrm:projH} in \textit{Appendix: Matrix Projection}, we can represent $E^{t+1}$ as follows. 

For each $1 \leq i \leq m$, let $U \Lambda U^T$ be the eigenvalue
decomposition of $s_{t+1}^{E_i}$,  and $\lambda_1, \dots, \lambda_k$
be its eigenvalues.  Define the sets
$$
M_0 \,  \defeq \, \{ 1 \leq l \leq k \colon \lambda_l \geq 0 \}, \quad
\bar{M}_0 \, \defeq \, \{1, \dots, k \} \setminus M_0 .
$$
Then
\begin{equation} \label{eq:solutionE}
\ba{rcl}
E^{(t+1)}_i & = & U \diag(\omega)U^T,
\ea
\end{equation}
where $\omega$ is determined according to the following three cases.
\begin{enumerate}
\item
$\beta_{t+1}\tau(kr-\rho_u^{(i)}) \leq \sum_{q \in \bar{M}_0} \lambda_q
\leq \beta_{t+1} \tau(kr-\rho_l^{(i)})$.

Let
$$
\ba{rcl}
\omega_l & = & \begin{cases}
r & l \in M_0 \\
r - \frac{\lambda_l}{\beta_{t+1}\tau} & l \notin M_0 .
\end{cases}
\ea
$$

\item \label{itm:lambdal}
$\sum_{q \in \bar{M}_0} \lambda_q < \beta_{t+1}\tau(kr-\rho_u^{(i)})$.

Then there is a partition $\bar{M}_0 = P \cup \bar{P}$:
$$
\ba{rcl}
P & = & \left\{ l \in \bar{M}_0 \colon \lambda_l < \frac{\beta_{t+1}\tau(\rho_u^{(i)} - kr) + \sum_{q\in P} \lambda_q}{|P|} \right\} , \\
\\
\bar{P} & = & \left\{ l \in \bar{M}_0 \colon \lambda_l \geq
\frac{\beta_{t+1} \tau ( \rho_u^{(i)} - kr) + \sum_{q\in P}
\lambda_q}{|P|} \right\} . \ea
$$
Let
$$
\omega_l = \begin{cases}
r & l \in \bar{P}\cup M_0 \\
r - \frac{\lambda_l}{\beta_{t+1}\tau} + \frac{\beta_{t+1}
\tau(\rho_u^{(i)} - kr)+\sum_{q\in P} \lambda_q}{\beta_{t+1}\tau|P|} & l \in
P .
\end{cases}
$$

\item $ \sum_{q \in \bar{M}_0} \lambda_q > \beta_{t+1}\tau(kr-\rho_l^{(i)})$. \label{itm:lambdag}

Then there is a partition $M_0 = P_m \cup \bar{P}_m$:
$$
\ba{rcl} P_m & = & \left\{ l \in M_0 \colon \frac{\rho_l^{(i)} +
\frac{1}{\beta_{t+1}\tau}  \sum_{j\in P_m \cup \bar{M}_0} \lambda_j
- kr}{|P_m| + |\bar{M}_0|} > \frac{\lambda_l}{\beta_{t+1}\tau}
\right\} . \ea
$$

Let
$$
\ba{rcl}
\omega_l & = & \begin{cases}
 - \frac{\lambda_l}{\beta_{t+1}\tau} + \frac{\beta_{t+1}\tau(\rho_l^{(i)} - |\bar{P}_m|r) + \sum_{q\in \bar{M}_0 \cup P_m} \lambda_q}{\beta_{t+1}\tau(|\bar{M}_0|+|P_m|)},
& l \in \bar{M}_0 \cup P_m. \\
r & l \in \bar{P}_m
\end{cases}
\ea
$$
\end{enumerate}
The eigenvalues $\omega$ in case~\ref{itm:lambdal} can be obtained
by the following algorithm:

\paragraph{Algorithm projSyml}
\begin{description}
\item[Step 1] (Initialization)
Let $\lambda_{\sigma(1)} \leq \dots \leq \lambda_{\sigma(p)} < 0$ be the $p$ negative eigenvalues of $s_{t+1}^{E_i}$.\\
Let
$$
P = \{ \sigma(1) \}, \quad T = \beta_{t+1}\tau(\rho_u^{(i)} - kr) + \lambda_{\sigma(1)}, \quad q = 1 .
$$
\item[Step 2] While  $q \lambda_{\sigma(q+1)} < T$, do
$$
P \cup \{\sigma(q+1)\} \rightarrow P, \quad T +
\lambda_{\sigma(q+1)}\rightarrow T, \quad q+1 \rightarrow q .
$$
\item[Step 3] Let
$$
\ba{rcl}
\omega_l & = & \begin{cases} r & l \notin P \\
r - \frac{\lambda_l}{\beta_{t+1}\tau} + \frac{T}{\beta_{t+1}\tau q} & l \in P .
\end{cases}
\ea
$$
\end{description}

\medskip

Similarly, the eigenvalues in case~\ref{itm:lambdag} can be obtained by the following algorithm:
\paragraph{Algorithm projSymg}
\begin{description}
\item[Step 1] (Initialization)
Let $0 < \lambda_{\sigma(1)} \leq \dots \leq \lambda_{\sigma(u)}$ be the $u$ positive eigenvalues of $s_{t+1}^{E_i}$.\\
\begin{itemize}
\item
If $u=p$, let 
$$
U = \{ \sigma(1) \}, \quad T = \beta_{t+1}\tau(\rho_l^{(i)} - kr) + \lambda_{\sigma(1)}, \quad q = 1 .
$$
\item
If $u < p$, let 
$$
U = \bar{M}_0 \cup \{i \colon \lambda_i =0 \} , \quad T = \beta_{t+1}\tau(\rho_l^{(i)} - kr) + \sum_{j \in \bar{M}_0} \lambda_j , \quad q = |U| .
$$
\end{itemize}
\item[Step 2] While  $ q \lambda_{\sigma(q+1)} < T$, do
$$
U \cup \{\sigma(q+1)\} \rightarrow U, \quad T +
\lambda_{\sigma(q+1)}\rightarrow T, \quad q+1 \rightarrow q .
$$
\item[Step 3] Let
$$
\ba{rcl}
\omega_l & = & \begin{cases} r & l \in  M_0 \setminus U \\
r - \frac{\lambda_l}{\beta_{t+1}\tau} + \frac{T}{\beta_{t+1}\tau q} & l \in  \bar{M}_0 \cup U .
\end{cases}
\ea
$$
\end{description}
\medskip

\section{Complexity of the Algorithm} \label{S:boundd}
To understand the complexity of the algorithm for model~\eqref{MLSDP}, in this part we study duality gap and computational cost of each iteration.
By \cite{Nes07}, it takes $\mathcal{O}(\frac{1}{\epsilon^2})$ iterations to solve a general convex-concave saddle-point problem to the absolute accuracy $\epsilon$, which is the exact lower complexity bound for such class of algorithm schemes.
To give an insight of how the data of FMO model, such as $f$, $B$, and $\eta$, affect convergence time,  
we give upper bounds of the duality gap of the iterates generated by our algorithm in terms of the number of iterations and input data in \S\S~\ref{SS:IterationBound}.
In~\S\S~\ref{SS:ComputationCost}, we derive computational cost per iteration.
From the duality gap and computational cost per iteration given in this section, we can estimate from given data how much computational effort is needed at most to approximate a solution of a problem instance of model~\eqref{MLSDP} based on the method proposed in the paper.

\subsection{Iteration Bounds} \label{SS:IterationBound}
By \cite[Chapter 6, Proposition 2.1]{ET99},
for a function $L \colon \mathcal{A} \times \mathcal{B}\mapsto \mathbb{R}$, assume 
\begin{itemize}
\setlength\itemsep{-0.3em}
\item
the sets $\mathcal{A}$ and $\mathcal{B}$ are convex, closed, non-empty, and bounded;
\item
for any fixed $u \in \mathcal{A}$, $p \mapsto L(u,p)$ is concave and upper semicontinuous;
\item
for any fixed $p \in \mathcal{A}$, $u \mapsto L(u,p)$ is convex and upper semicontinuous;
\end{itemize}
then the function $L$ has at least one saddle-point.

Since $E$ and $x$ are bounded, and $F$ is continuous and finite, by the above results, we conclude that $F$ has a saddle-point and a finite saddle-value.
An upper bound on duality gap is given in~\cite[Theorem 6]{Nes07}.
We next represent the duality gap in terms of input data.

Define
$$
\ba{rcl}
\|(g_E, g_x)\|_* & \defeq & \left[ \frac{1}{\tau}\|g_E\|_{F,*}^2 + \frac{1}{1-\tau}\|g_x\|_{2,*}^2 \right]^{1/2} ,
\ea
$$

$$
\ba{rcl}
\|(E, x)\| & \defeq & \left[ \tau \|E\|_F^2 + (1-\tau)\|x\|_2^2 \right]^{1/2} .
\ea
$$

For a matrix $V$, denote $\|V\|^2_2 \defeq \lambda_{\max}(V^TV)$.

Since
$$
\ba{rcl}
\begin{pmatrix} \rho_u - (k-1)r & & & \\ & r & & \\ & & \ddots & \\ & & & r \end{pmatrix} & \in &
\arg \max\limits_{E_i \in Q_k^{(i)}} d_E(E_i) ,
\ea
$$
we have
\begin{equation} \label{bd:DE}
\ba{rcl}
D_E & \defeq & \frac{1}{2} \max\limits_{E_i \in Q_k^{(i)}}\| E - rI \|_F^2 \; \leq \; \frac{1}{2} m(\rho_u - kr)^2 .
\ea
\end{equation}

Furthermore, by our algorithm scheme,
\begin{equation} \label{bd:SDADx}
\ba{rcl}
D_x & \defeq & \max\limits_{x \in Q_x} \frac{1}{2}\|x\|^2_2 \; \leq \; \frac{L}{2} \eta^2  .
\ea
\end{equation}

Define
$$
\ba{rcl}
\kappa_t & \defeq & \frac{1}{\sum_{l=0}^t\alpha_l} \max\limits_{E_i \in Q_k^{(i)}} \left\{\sum_{l=0}^t \alpha_l \la g_E\left(E^{(l)}, x^{(l)} \right), E^{(l)} - E \ra \right\} , \\
\\
\upsilon_t & \defeq & \frac{1}{\sum_{l=0}^t\alpha_l} \max\limits_{x_j \in \mathbb{R}^N} \left\{\sum_{l=0}^t \alpha_l \la g_x\left(E^{(l)}, x^{(l)} \right), x- x^{(l)}  \ra \colon \|x_j\|_2 \leq \eta \right\} .
\ea
$$

By Cauchy-Schwartz-Boniakovsky inequality, it is easy to verify that
$$
\ba{rcl} 
\upsilon_t & = & \frac{1}{\sum_{l=0}^t\alpha_l} \left( \sum_{j=1}^L
\eta \|s_{t+1}^{x_j}\|_{2,*}  - \sum_{l=0}^t \alpha_l \la
g_x\left(E^{(l)}, x^{(l)} \right), x^{(l)}  \ra \right) , 
\ea 
$$
which is attained at
$$
\ba{rcl}
x_j & = & \begin{cases}
 \frac{\eta}{\| s_{t+1}^{x_j}\|_{2,*}} s_{t+1}^{x_j} & s_{t+1}^{x_j} \neq 0 , \\
0 & \text{otherwise} .  
\end{cases}
\ea
$$

Now let us give a bound for $\kappa_t$.   Let $\kappa_t =
\frac{1}{\sum_{l=1}^t \alpha_l} \sum_{i=1}^m \kappa^t_i$, where
$$
\ba{rcl} \kappa^t_i & \defeq & \max\limits_{E_i \in Q_k^{(i)}}
\left\{\sum_{l=0}^t \alpha_l \la g_{E_i}\left(E^{(l)}, x^{(l)}
\right), E^{(l)}_i - E_i \ra \right\}. \ea
$$

By Hoffman-Wielandt theorem,

$$
\ba{rcl}
\kappa^t_i & = &
\sum_{l=0}^t \alpha_l \la g_{E_i}(E^{(l)}, x^{(l)}), E^{l}_i \ra
- \min\limits_{E_i \in Q_k^{(i)}} \la s_{t+1}^{E_i}, E_i \ra
\\
\\
 & = & \sum_{l=0}^t \alpha_l \la g_{E_i}(E^{(l)}, x^{(l)}), E^{l}_i \ra \\
 \\ & & -
\begin{cases}
 [\rho_l - kr]_+ \lambda_{\min}(s_{t+1}^{E_i}) + r \tr( s_{t+1}^{E_i}) & \lambda_{\min}(s_{t+1}^{E_i}) > 0 , \\
 (\rho_u - kr) \lambda_{\min}(s_{t+1}^{E_i})+ r \tr(s_{t+1}^{E_i}) & \lambda_{\min}(s_{t+1}^{E_i}) \leq 0 . \end{cases}
\ea
$$

Define
$$
\ba{rcl}
\delta_t & \defeq & \max_{E_i \in Q_k^{(i)}, x \in Q_x} \left\{\sum_{l=0}^t \alpha_l \la \left(g_E^{(l)}, g_x^{(l)}\right), (E^{(l)}, x^{(l)}) - (E, x) \ra \colon d(x) \; \leq \; \tau D_E + (1-\tau) D_x \right\}
\ea
$$

By \cite[Theorem 6]{Nes07}, $\kappa_t + \upsilon_t$ is a bound of the duality gap; i.e.
\begin{equation} \label{bd:gapt}
\ba{rcl} 0 \; \leq \; \max\limits_{ x_j \in Q_x } F(\hat{E}^{(t+1)}, x) - \min\limits_{E_i^{(i)}
\in Q_k^{(i)}} F(E, \hat{x}^{(t+1)}) & \leq & \kappa_t + \upsilon_t \; \leq \; \frac{1}{\sum_{l=0}^t\alpha_l} \delta_t . 
\ea
\end{equation}

Next, we bound the above duality gap by input data.  To this end, we first bound the partial derivatives $g_E$ and $g_x$.

Denote
\begin{equation} \label{def:B}
\ba{rcl}
B_i \; \defeq \; \begin{bmatrix} B_{i,1} \\ \vdots \\ B_{i, nig} \end{bmatrix},
& \qquad & B \; \defeq \; \begin{bmatrix} B_1 \\ \vdots \\ B_m \end{bmatrix} .
\ea
\end{equation}
\begin{lemma}
The partial derivative of $F(E, x)$ in $E$ can be bounded by $\|x\|_2$ as follows:
$$
\ba{rcl} \left\| g_E(E, x) \right\|_{F,*}^2 & \leq & mk + L^2 \frac{\gamma}{r} \|B\|_2^2 \eta^2  \; \defeq \; L_E^2 .  \ea
$$
\end{lemma}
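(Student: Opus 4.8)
The plan is to bound each block $\|g_{E_i}(E,x)\|_F$ separately and sum, exploiting the positive semidefinite structure of the compliance-sensitivity term. Write $g_{E_i}(E,x) = I_k - T_i$, where
$$
T_i \defeq \sqrt{\gamma}\sum_{j\in R}\la A(E)x_j, x_j\ra^{-1/2}\left(\sum_{l=1}^{nig}B_{i,l}x_jx_j^TB_{i,l}^T\right).
$$
Each summand $B_{i,l}x_jx_j^TB_{i,l}^T = (B_{i,l}x_j)(B_{i,l}x_j)^T$ is positive semidefinite and the scalar weights are positive, so $T_i \succeq 0$. Expanding the Frobenius norm and using $\|I_k\|_F^2 = k$ and $\la I_k, T_i\ra = \tr(T_i) \geq 0$,
$$
\|g_{E_i}\|_F^2 = k - 2\tr(T_i) + \|T_i\|_F^2 \leq k + \|T_i\|_F^2 .
$$
Summing over $i$ produces the leading term $mk$ for free and reduces the claim to showing $\sum_{i=1}^m\|T_i\|_F^2 \leq L^2\frac{\gamma}{r}\|B\|_2^2\eta^2$.

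The next step controls the normalization factor $\la A(E)x_j, x_j\ra^{-1/2}$ from below. Using $A(E) = \sum_{i,l} B_{i,l}^TE_iB_{i,l}$ together with the constraint $\lambda_{\min}(E_i)\geq r$ built into $Q_k^{(i)}$, i.e. $E_i \succeq rI_k$, I get
$$
\la A(E)x_j, x_j\ra = \sum_{i,l}(B_{i,l}x_j)^TE_i(B_{i,l}x_j) \geq r\sum_{i,l}\|B_{i,l}x_j\|^2 = r\|Bx_j\|^2 ,
$$
with $B$ the stacked matrix from \eqref{def:B}. In particular $j\in R$ forces $\|Bx_j\|>0$, hence $\la A(E)x_j, x_j\ra^{-1/2} \leq (r^{1/2}\|Bx_j\|)^{-1}$.

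For the numerator, since $\sum_l (B_{i,l}x_j)(B_{i,l}x_j)^T$ is positive semidefinite its Frobenius norm is bounded by its trace, giving $\|\sum_l B_{i,l}x_jx_j^TB_{i,l}^T\|_F \leq \sum_l\|B_{i,l}x_j\|^2 = \|B_ix_j\|^2$. Combining this with the triangle inequality, the denominator bound, and $\|B_ix_j\|\leq\|Bx_j\|$ (a sub-block of $Bx_j$) yields
$$
\|T_i\|_F \leq \sqrt{\frac{\gamma}{r}}\sum_{j\in R}\frac{\|B_ix_j\|^2}{\|Bx_j\|} \leq \sqrt{\frac{\gamma}{r}}\sum_{j=1}^L\|B_ix_j\| .
$$
Finally I apply Cauchy--Schwarz in $j$, namely $\big(\sum_j\|B_ix_j\|\big)^2 \leq L\sum_j\|B_ix_j\|^2$, interchange the order of summation in $\sum_i\sum_j$, and use $\sum_i\|B_ix_j\|^2 = \|Bx_j\|^2 \leq \|B\|_2^2\eta^2$ with $\|x_j\|\leq\eta$ to collect a second factor $L$ from the $j$-sum, giving $\sum_i\|T_i\|_F^2 \leq \frac{\gamma}{r}\,L\cdot L\|B\|_2^2\eta^2$.

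The Frobenius/trace inequalities are routine; the step that must be handled with care is the denominator estimate, since it injects the factor $r^{-1}$ and relies essentially on the uniform lower eigenvalue bound $E_i\succeq rI_k$. Without that bound the normalization $\la A(E)x_j, x_j\ra^{-1/2}$ could blow up and no uniform bound on $g_E$ would hold, so this is where I expect the main effort to lie.
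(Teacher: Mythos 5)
Your proof is correct and follows essentially the same route as the paper's: drop the cross term using positive semidefiniteness of the sensitivity term, bound the Frobenius norm of $\sum_l B_{i,l}x_jx_j^TB_{i,l}^T$ by its trace, control the normalization via $\la A(E)x_j,x_j\ra \geq r\|Bx_j\|^2$ from $E_i \succeq rI_k$, and extract the factor $L^2$ by Cauchy--Schwarz over the load index together with $\|Bx_j\|\leq\|B\|_2\eta$. The only difference is bookkeeping (you apply the triangle inequality to $\|T_i\|_F$ before Cauchy--Schwarz, the paper applies Cauchy--Schwarz inside the squared norm first), which yields the identical constant.
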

\begin{proof}
We have
\begin{equation} \label{eq:xAEx}
\ba{rcl}
\la A(E)x_j, x_j \ra & = & \sum_{i=1}^m  \sum_{l=1}^{nig} \la E_i B_{i, l}x_j, B_{i, l}x_j \ra \\
\\
& \geq & \sum_{i=1}^m \lambda_{min}(E_i) \sum_{l=1}^{nig}\la  B_{i, l}x_j, B_{i, l}x_j \ra \\
\\
& \geq & r \sum_{i=1}^m \sum_{l=1}^{nig}\la  B_{i, l}x_j, B_{i, l}x_j \ra ,
\ea
\end{equation}
where the last inequality is from the definition of the set $Q_k^{(i)}$.

Since for two matrices $A$ and $B$ of proper dimensions, $\tr(AB) = \tr(BA)$, we have
\begin{equation} \label{eq:BxxB}
\ba{rcl}
\left\|\sum_{l=1}^{nig} B_{i,l} x_j x_j^T B_{i, l}^T \right\|_F & \leq & \sum_{l=1}^{nig} \left\|B_{i,l} x_j x_j^T B_{i, l}^T \right\|_F \\
\\
& = & \sum_{l=1}^{nig} \left[ \tr \left( B_{i,l} x_j x_j^T B_{i, l}^T B_{i,l} x_j x_j^T B_{i, l}^T \right) \right]^{1/2} \\
\\
& = & \sum_{l=1}^{nig} \left[ \left( x_j^T B_{i, l}^T B_{i,l} x_j \right)^2 \right]^{1/2} \\
\\
& = & \sum_{l=1}^{nig} \la  B_{i, l}x_j, B_{i, l}x_j \ra .
\ea
\end{equation}

Therefore,
$$
\ba{rcl}
\left\| \la A(E)x_j, x_j \ra^{-1/2}\left(\sum_{l=1}^{nig} B_{i,l}x_jx_j^TB_{i,l}^T\right) \right\|_F & \leq &
\frac{1}{\sqrt{r}} \left[ \sum_{l=1}^{nig} \la  B_{i, l}x_j, B_{i, l}x_j \ra \right]^{1/2} \\
\\ & \leq & \frac{1}{\sqrt{r}}  \lambda_{\max}\left( \sum_{l=1}^{nig} B_{i, l}^T B_{i, l} \right)^{1/2}\|x_j\|_2 .
\ea
$$
Note that
$$
\ba{rcl}
\lim\limits_{x_j \rightarrow 0} \left\| \la A(E)x_j, x_j \ra^{-1/2}\left(\sum_{l=1}^{nig} B_{i,l}x_jx_j^TB_{i,l}^T\right) \right\|_F  & = & 0 .
\ea
$$

We also have
$$
\ba{rcl}
\sum_{i=1}^m \sum_{l=1}^{nig} B_{i, l}^T B_{i, l} \; = \; B^T B , & \qquad &
\lambda_{\max}\left( \sum_{i=1}^m \sum_{l=1}^{nig} B_{i, l}^T B_{i, l} \right)^{1/2} \; =  \; \|B\|_2 .
\ea
$$

Hence, $g_{E}(E, x)$ is bounded as below:
\begin{equation} \label{bd:gE}
\ba{rcl}
\left\| g_{E}(E, x) \right\|_{F,*}^2 & \leq & \|I\|_F^2  + \gamma \sum\limits_{i=1}^m \left\| \sum\limits_{j \in R} \la A(E)x_j, x_j \ra^{-1/2} \left( \sum\limits_{i=1}^{nig} B_{i, l} x_j x_j^T B_{i, l}^T \right) \right\|_F^2 \\
\\
& \leq & \|I\|_F^2 + L \gamma \sum\limits_{i=1}^m  \sum\limits_{j \in R} \left\| \la A(E)x_j, x_j \ra^{-1/2} \left( \sum\limits_{i=1}^{nig} B_{i, l} x_j x_j^T B_{i, l}^T \right) \right\|_F^2 \\
\\
& = & mk + L \gamma \sum\limits_{j \in R}  \la A(E)x_j, x_j \ra^{-1}  \sum\limits_{i=1}^m  \left\|\sum\limits_{i=1}^{nig} B_{i, l} x_j x_j^T B_{i, l}^T \right\|^2_F \\
\\
& \stackrel{\rm{\eqref{eq:BxxB}}}{\leq}& mk + L \gamma \sum\limits_{j \in R}  \la A(E)x_j, x_j \ra^{-1} \sum\limits_{i=1}^m \left( \sum\limits_{l=1}^{nig} \la  B_{i, l}x_j, B_{i, l}x_j \ra \right)^2 \\
\\
& \leq &  mk + L \gamma \sum\limits_{j \in R}  \la A(E)x_j, x_j \ra^{-1} \left( \sum\limits_{i=1}^m \sum\limits_{l=1}^{nig} \la  B_{i, l}x_j, B_{i, l}x_j \ra \right)^2 \\
\\
& \stackrel{\rm{\eqref{eq:xAEx}}}{\leq} & mk + L \frac{\gamma}{r} \sum\limits_{j \in R} \sum\limits_{i=1}^m \sum\limits_{l=1}^{nig} \la  B_{i, l}x_j, B_{i, l}x_j \ra \\
\\
& \stackrel{\rm{\eqref{def:B}}}{\leq} & mk + L^2 \frac{\gamma}{r} \|B\|_2^2 \eta^2  .
\ea
\end{equation}
\end{proof}

Next, we give a bound on the norm of $g_{x}(E, x)$.

Let $\tilde{E}_i$ be the block diagonal matrix of $nig$ same diagonal blocks $E_i$.  Let $\tilde{E}$ be the block diagonal matrix with diagonal blocks $\tilde{E}_i, \, (i=1, \dots, m)$:
$$
\ba{rcl}
\tilde{E}_i \; \defeq \; \begin{bmatrix} E_i & & \\ & \ddots & \\ & & E_i \end{bmatrix} , \qquad
\tilde{E} \defeq \begin{bmatrix} \tilde{E}_1 & & \\ & \ddots & \\ & & \tilde{E}_m \end{bmatrix}.
\ea
$$
Then
$$
\ba{rcl}
A(E) & = & B^T \tilde{E} B .
\ea
$$
\begin{lemma}
The partial derivative of $F(E, x)$ in $x$ can be bounded as follows:
$$
\ba{rcl}
\left\| g_{x}(E, x) \right\|_{2,*} & \leq & 2\|f \|_2 + 2\sqrt{\gamma L (\rho_u - kr + r)} \|B\|_2 \; \defeq \; L_x.
\ea
$$
\end{lemma}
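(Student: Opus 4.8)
The plan is to bound each component $g_{x_j}$ separately and then assemble them in the product Euclidean norm. For a fixed $j$ with $\la A(E)x_j, x_j\ra > 0$, applying the triangle inequality to the formula $g_{x_j}(E,x) = 2f_j - 2\sqrt{\gamma}\,\la A(E)x_j, x_j\ra^{-1/2} A(E)x_j$ gives
$$
\|g_{x_j}(E,x)\|_2 \;\leq\; 2\|f_j\|_2 + 2\sqrt{\gamma}\,\frac{\|A(E)x_j\|_2}{\la A(E)x_j, x_j\ra^{1/2}} .
$$
The degenerate case $\la A(E)x_j, x_j\ra = 0$, where the subgradient is $2f_j - 2\sqrt{\gamma}A(E)y$ with $\la A(E)y, y\ra = 1$, will be treated in the same way after writing $w = A(E)^{1/2}y$, so that $\|w\|_2=1$ and $\|A(E)y\|_2 = \|A(E)^{1/2}w\|_2 \le \lambda_{\max}(A(E))^{1/2}$; this guarantees the final bound holds for \emph{every} element of the subdifferential.

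The key step is to control the Rayleigh-type ratio $\|A(E)x_j\|_2 / \la A(E)x_j, x_j\ra^{1/2}$. Since $A(E)$ is symmetric positive semidefinite, I would substitute $v = A(E)^{1/2}x_j$, so that $\la A(E)x_j, x_j\ra^{1/2} = \|v\|_2$ while $\|A(E)x_j\|_2 = \|A(E)^{1/2}v\|_2$; the ratio is therefore at most $\|A(E)^{1/2}\|_2 = \lambda_{\max}(A(E))^{1/2}$.

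It then remains to bound $\lambda_{\max}(A(E))$ uniformly over the feasible set. Using the factorization $A(E) = B^T\tilde{E}B$ established just above, submultiplicativity of the spectral norm gives $\lambda_{\max}(A(E)) = \|A(E)\|_2 \leq \|B\|_2^2\,\|\tilde{E}\|_2$, and $\|\tilde{E}\|_2 = \max_{1\leq i\leq m}\lambda_{\max}(E_i)$ because $\tilde{E}$ is block diagonal with the blocks $E_i$ repeated. For $E_i \in Q_k^{(i)}$, the constraint $\lambda_{\min}(E_i)\geq r$ together with $\la I_k, E_i\ra \leq \rho_u$ forces the largest eigenvalue to satisfy $\lambda_{\max}(E_i) \leq \tr(E_i) - (k-1)r \leq \rho_u - (k-1)r = \rho_u - kr + r$. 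Combining these, each component obeys $\|g_{x_j}(E,x)\|_2 \leq 2\|f_j\|_2 + 2\sqrt{\gamma}\,\|B\|_2\sqrt{\rho_u-kr+r}$.

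Finally, I would assemble the components. Writing $g_x = a + b$ with $a_j = 2f_j$ and $b_j$ the remaining term, the triangle inequality in the product Euclidean norm yields $\|g_x\|_{2,*} \leq \|a\|_2 + \|b\|_2$, where $\|a\|_2 = 2\big(\sum_{j=1}^L\|f_j\|_2^2\big)^{1/2} = 2\|f\|_2$ and, since each $\|b_j\|_2$ obeys the same bound, $\|b\|_2 \leq \sqrt{L}\cdot 2\sqrt{\gamma}\,\|B\|_2\sqrt{\rho_u - kr + r} = 2\sqrt{\gamma L(\rho_u-kr+r)}\,\|B\|_2$, which is exactly $L_x$. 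I expect no serious obstacle here; the only points that need care are the degenerate case $\la A(E)x_j,x_j\ra=0$ and correctly extracting the $\sqrt{L}$ factor in the aggregation of the second term rather than inadvertently attaching it to the $\|f\|_2$ term.
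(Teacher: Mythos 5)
Your proposal is correct and follows essentially the same route as the paper: split off the $2f$ term, bound the Rayleigh-type ratio $\|A(E)x_j\|_2/\la A(E)x_j,x_j\ra^{1/2}$ by $\|A(E)\|_2^{1/2}$, control $\|A(E)\|_2 \leq \|\tilde{E}\|_2\|B\|_2^2$ with $\lambda_{\max}(E_i)\leq \rho_u-(k-1)r$ from the trace and eigenvalue constraints of $Q_k^{(i)}$, and aggregate the $L$ components to pick up the $\sqrt{L}$ factor. Your explicit treatment of the degenerate case $\la A(E)x_j,x_j\ra=0$ matches the paper's inclusion of the $\sum_{j\notin R}\la A(E)y,A(E)y\ra$ term, so nothing is missing.
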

\begin{proof}
For a vector $z$ of proper dimension, we have
$$
\ba{rcl}
\|A(E)z\|_2 & \leq & \|A(E)\|^{1/2}_2 \la A(E)z, z \ra^{1/2} .
\ea
$$
And for two matrices $A$ and $B$ of proper dimension, it holds that $\lambda_{\max}(AB) \leq \lambda_{\max}(A) \lambda_{\max}(B)$, and $\lambda_{\max}(AB)= \lambda_{\max}(BA)$.

In addition, by the definition of $Q_k^{(i)}$, we have
$$
\ba{rcl}
\lambda_{\max}(\tilde{E}) \; = \; \lambda_{\max}(E) & \leq & \rho_u - (k-1)r .
\ea
$$

Therefore, $\|A(E)\|_2$ can be bounded as below:
$$
\ba{rcl}
\|A(E)\|_2^{1/2} & \leq & \|\tilde{E}\|_2^{1/2}\|B^TB\|_2^{1/2} \; \leq \;  \sqrt{\rho_u - (k-1)r} \|B\|_2 .
\ea
$$

Hence
\begin{equation} \label{bd:gx}
\ba{rcl}
\left\| g_{x}(E, x) \right\|_{2,*} & \leq & 2\|f \|_2 + 2\sqrt{\gamma} \Big( \sum_{j \in R} \la A(E) x_j, x_j \ra^{-1} \la A(E)x_j,  A(E) x_j \ra \\
\\
& + & \sum_{j \notin R} \la A(E)y, A(E)y \ra \Big)^{1/2} \\
\\
& \leq & 2\|f \|_2 + 2\sqrt{\gamma L (\rho_u - kr + r)} \|B\|_2 .
\ea
\end{equation}
\end{proof}

Next, we give bounds on the duality gaps.

By \cite[Lemma 3]{Nes07}, we have
\begin{equation} \label{bd:hatbeta}
\ba{rcl}
\sqrt{2t-1} & \leq & \hat{\beta}_t \; \leq \; \frac{1}{1+\sqrt{3}} + \sqrt{2t-1} , \quad t \geq 1 .
\ea
\end{equation}

\begin{theorem}
If the iterates are generated by the method of {\em Simple Dual Average}, the duality gap is bounded as
\begin{equation} \label{gap:mmxs}
\ba{rcl}
0 & \leq & \max\limits_{x_j \in Q_x} F(\hat{E}^{(t+1)}, x) - \min\limits_{E_i \in Q_k^{(i)}} F(E, \hat{x}^{(t+1)}) \\
\\
& \leq & \frac{0.37 + \sqrt{2t+1}}{t+1} \Big[
\sqrt{(mk + \frac{\gamma}{r} L^2 \|B\|_2^2\eta^2) m}(\rho_u - kr) \\
\\
& + & 2\left(\|f \|_2 + \sqrt{\gamma L (\rho_u - kr + r)} \|B\|_2\right)\sqrt{L} \eta
\Big] .
\ea
\end{equation}
\end{theorem}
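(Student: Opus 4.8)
The plan is to specialize the generic duality-gap estimate \eqref{bd:gapt} to the \emph{Simple Dual Averages} choice $\alpha_l\equiv 1$, so that $\sum_{l=0}^t\alpha_l=t+1$ and the left-hand gap is at most $\frac{1}{t+1}\delta_t$. It then remains to bound $\delta_t$ by the gradient magnitudes $L_E,L_x$ and the prox diameters $D_E,D_x$, and to insert the explicit data-dependent expressions already derived in this section.

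First I would invoke the master estimate of the dual-averaging scheme \cite[Theorem 6]{Nes07}. Because the primal prox $d_E$ and the dual prox $d_x$ each have convexity parameter $1$ in their respective norms, the combined prox $\tau d_E+(1-\tau)d_x$ has convexity parameter $1$ in the combined norm $\|(\cdot,\cdot)\|$, and its maximum over the feasible set is $D\defeq \tau D_E+(1-\tau)D_x$. With $\beta_l=\sigma\hat\beta_l$ and $\alpha_l\equiv 1$, Theorem 6 yields
$$\delta_t \;\leq\; \beta_{t+1}D + \frac{1}{2}\sum_{l=0}^t \frac{\|(g_E^{(l)},g_x^{(l)})\|_*^2}{\beta_{l+1}} \;\leq\; \hat\beta_{t+1}\left[\sigma D + \frac{1}{2\sigma}\left(\frac{L_E^2}{\tau}+\frac{L_x^2}{1-\tau}\right)\right],$$
where the last step uses the gradient bounds \eqref{bd:gE} and \eqref{bd:gx} together with the telescoping inequality $\sum_{l=0}^t 1/\hat\beta_{l+1}\leq \hat\beta_{t+1}$, itself a consequence of $\hat\beta_{l+1}-\hat\beta_l=1/\hat\beta_l$ and $\hat\beta_{t+1}\geq 1$.

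The decisive step is the choice of the splitting parameter $\tau\in(0,1)$ and the scale $\sigma>0$; these are exactly what the parameter-update scheme of the Appendix supplies. Writing the bracket as $\big(\sigma\tau D_E+\frac{L_E^2}{2\sigma\tau}\big)+\big(\sigma(1-\tau)D_x+\frac{L_x^2}{2\sigma(1-\tau)}\big)$, I would pick $\sigma,\tau$ so that $\sigma\tau=L_E/\sqrt{2D_E}$ and $\sigma(1-\tau)=L_x/\sqrt{2D_x}$; these are compatible, since their sum fixes $\sigma$ and their ratio fixes $\tau$, and one checks the resulting $\tau$ lies in $(0,1)$. Each block then attains its minimum by AM--GM, and the bracket collapses to $L_E\sqrt{2D_E}+L_x\sqrt{2D_x}$; equivalently, this is the tight case of the Cauchy--Schwarz inequality $\big(\frac{L_E^2}{\tau}+\frac{L_x^2}{1-\tau}\big)\big(\tau D_E+(1-\tau)D_x\big)\geq (L_E\sqrt{D_E}+L_x\sqrt{D_x})^2$. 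Hence $\delta_t\leq \hat\beta_{t+1}\big(L_E\sqrt{2D_E}+L_x\sqrt{2D_x}\big)$.

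Finally I would substitute the explicit estimates $\sqrt{2D_E}\leq\sqrt{m}\,(\rho_u-kr)$ from \eqref{bd:DE}, $\sqrt{2D_x}\leq\sqrt{L}\,\eta$ from \eqref{bd:SDADx}, and the upper bound \eqref{bd:hatbeta} at index $t+1$, namely $\hat\beta_{t+1}\leq \frac{1}{1+\sqrt3}+\sqrt{2t+1}\leq 0.37+\sqrt{2t+1}$. Dividing by $t+1$ and replacing $L_E,L_x$ by the values from the two preceding lemmas produces exactly \eqref{gap:mmxs}. The one genuinely delicate point is the joint optimization over $(\sigma,\tau)$: the clean separated sum of the theorem emerges only once the coupling between the primal and dual blocks is balanced, whereas any other admissible choice gives a strictly larger bound; everything else is substitution of already-established inequalities.
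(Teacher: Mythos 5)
Your proposal is correct and follows essentially the same route as the paper: both apply the dual-averaging duality-gap estimate from \cite{Nes07}, balance $\sigma$ and $\tau$ so that the bracket collapses to $\sqrt{2}\bigl(L_E\sqrt{D_E}+L_x\sqrt{D_x}\bigr)$ (your $\tau$ agrees with the paper's $1/\tau=1+\tfrac{L_x}{L_E}\sqrt{D_E/D_x}$, and your separated AM--GM optimization in $(\sigma\tau,\sigma(1-\tau))$ is the cleaner way to see it), and then substitute the bounds \eqref{bd:gE}, \eqref{bd:gx}, \eqref{bd:DE}, \eqref{bd:SDADx}, \eqref{bd:hatbeta}. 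The only difference is that you re-derive the intermediate inequality from the $\delta_t$ expression and the telescoping property of $\hat\beta$ rather than citing \cite[(4.6)]{Nes07} directly.
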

\begin{proof}
Since partial subdifferentials of $f$ are uniformly bounded:
$$
\ba{c}
\|g_E\|_{F, *} \; \leq \; L_E, \quad \|g_x\|_{2, *} \; \leq \; L_x, \quad \forall \, (E, x) \in Q_k \times Q_x ,
\ea
$$
when we choose
$$
\ba{rcl}
\frac{1}{\tau} & = & 1 + \frac{L_x}{L_E}\sqrt{\frac{D_E}{D_x}}, \\
\\
\sigma & = & \sqrt{ \frac{\tau L^2_E + (1-\tau) L^2_x}{2 \tau D_E  + 2(1-\tau) D_x} } ,
\ea
$$
by \cite[(4.6)]{Nes07}, we have
\begin{equation} \label{gap:delta}
\ba{rcl}
\frac{1}{\sum_{l=0}^t \alpha_l }\delta_t & \leq & \frac{\hat{\beta}_{t+1}}{t+1} \sqrt{2}\left(L_E\sqrt{D_E} + L_x \sqrt{D_x} \right).
\ea
\end{equation}

Therefore, by \eqref{bd:gE}, \eqref{bd:gx}, \eqref{bd:gapt}, \eqref{bd:hatbeta}, \eqref{bd:DE}, \eqref{bd:SDADx}, we get
$$
\ba{rcl}
0 & \leq & \max\limits_{x_j \in Q_x} F(\hat{E}^{(t+1)}, x) - \min\limits_{E_i \in Q_k^{(i)}} F(E, \hat{x}^{(t+1)}) \\
\\
& \leq & \frac{0.37 + \sqrt{2t+1}}{t+1} \Big[
\sqrt{(mk + \frac{\gamma}{r} L^2 \|B\|_2^2\eta^2) m}(\rho_u - kr) \\
\\
& + & 2\left(\|f \|_2 + \sqrt{\gamma L (\rho_u - kr + r)} \|B\|_2\right)\sqrt{L} \eta
\Big] .
\ea
$$
\end{proof}

\begin{theorem}
If the iterates are generated by the method of {\em Weighted Dual Average}, the duality gap is bounded by
\begin{equation} \label{gap:mmxw}
\ba{rcl}
0 & \leq & \max\limits_{x_j \in Q_x} F(\hat{E}^{(t+1)}, x) - \min\limits_{E_i \in Q_k^{(i)}} F(E, \hat{x}^{(t+1)}) \\
\\
& \leq & \min \Bigg\{
\frac{0.37 + \sqrt{2t+1}}{t+1} \Big[ \sqrt{m^2k + L^2 m \frac{\gamma}{r}\|B\|_2^2 \eta^2}(\rho_u - kr) + 2 \sqrt{L} \eta \|f\|_2  \\
\\
& + & 2 \sqrt{\gamma(\rho_u - kr + r)}\|B\|_2 L \eta \Big], \; 
 \; 
\frac{(4\sqrt{2}+2)\hat{\beta}_{t+1} \sqrt{d(E^*, x^*)}}{t+1} \Big[mk + 8(3+\sqrt{2})\frac{\gamma}{r} L \|B\|_2^2 d(E^*, x^*) \\
\\
& + & 4\left(\|f\|_2 + \sqrt{\gamma L (\rho_u - kr + r)} \|B\|_2 \right)^2
\Big]^{1/2} \Bigg\} .
\ea
\end{equation}
\end{theorem}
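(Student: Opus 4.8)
The plan is to establish the two quantities under the minimum in \eqref{gap:mmxw} as separate upper bounds on the duality gap, and then take their minimum. The first of the two is exactly the \textit{Simple Dual Average} estimate \eqref{gap:mmxs}: indeed one checks that $\sqrt{m^2k + L^2 m \frac{\gamma}{r}\|B\|_2^2 \eta^2}(\rho_u - kr) + 2\sqrt{L}\eta\|f\|_2 + 2\sqrt{\gamma(\rho_u - kr + r)}\|B\|_2 L\eta$ is just the bracket of \eqref{gap:mmxs} rewritten using $\sqrt{L}\cdot\sqrt{L}=L$. So I would first argue that the weighted scheme retains the worst-case guarantee of the simple scheme whenever the subgradients are uniformly bounded. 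Since \eqref{bd:gE} and \eqref{bd:gx} already give $\|g_E\|_{F,*}\le L_E$ and $\|g_x\|_{2,*}\le L_x$ over all of $Q_k\times Q_x$, the uniform-bound estimate of \cite{Nes07} collapses to the same inequality \eqref{gap:delta} derived for simple averaging, and substituting $D_E$, $D_x$ from \eqref{bd:DE}, \eqref{bd:SDADx} together with \eqref{bd:hatbeta} reproduces the first bracket verbatim.

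The second, genuinely new, bound is the one that exploits the weights, and here I would invoke the refined duality-gap estimate of \cite{Nes07} for weighted dual averaging, in which the diameters $D_E$, $D_x$ of the feasible set are replaced by the prox-distance $d(E^*,x^*)$ from the initial point to a saddle point, and in which the subgradient norms enter \emph{along the trajectory} rather than uniformly over $Q_k\times Q_x$. The crucial consequence of that analysis is that the generated $x$-iterates stay inside a prox-ball whose radius is controlled by $\sqrt{d(E^*,x^*)}$, so that $\sum_{j\in R}\|x_j^{(l)}\|_2^2$ is bounded by a constant multiple of $d(E^*,x^*)$ instead of by the crude $\sum_{j\in R}\|x_j\|_2^2\le L\eta^2$ that was used at the last step of \eqref{bd:gE}.

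Concretely, I would re-run the chain \eqref{eq:xAEx}--\eqref{bd:gE} only up to the penultimate line, namely $\|g_E\|_{F,*}^2 \le mk + L\frac{\gamma}{r}\|B\|_2^2 \sum_{j\in R}\|x_j\|_2^2$, and then insert $\sum_{j\in R}\|x_j\|_2^2 \le 8(3+\sqrt{2})\, d(E^*,x^*)$; this produces exactly the factor $mk + 8(3+\sqrt{2})\frac{\gamma}{r}L\|B\|_2^2 d(E^*,x^*)$, while the companion term $4\left(\|f\|_2 + \sqrt{\gamma L (\rho_u - kr + r)}\|B\|_2\right)^2 = L_x^2$ is read off directly from \eqref{bd:gx}. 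Multiplying the square root of the sum of these two terms by the prefactor $(4\sqrt{2}+2)\hat{\beta}_{t+1}\sqrt{d(E^*,x^*)}/(t+1)$ furnished by \cite{Nes07}, and simplifying $\hat{\beta}_{t+1}$ via \eqref{bd:hatbeta} where convenient, yields the second expression. Taking the minimum of the two bounds finishes the proof, since the gap is dominated by each of them.

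The hard part will be the trajectory-confinement estimate $\sum_{j\in R}\|x_j^{(l)}\|_2^2 \le 8(3+\sqrt{2})\, d(E^*,x^*)$ with precisely that constant. This is the heart of the weighted-averaging analysis: one must show, using the recursion for $s_{t+1}$ and the strong convexity (parameter $\sigma$) of the combined prox-function $\tau d_E + (1-\tau)d_x$, that the iterates remain in a controlled neighbourhood of $(E^*,x^*)$, and then carefully track the accumulation of the numerical constants from \cite{Nes07} through that self-bounding argument. Everything else is substitution of the already-established bounds \eqref{bd:gE}, \eqref{bd:gx}, \eqref{bd:hatbeta} and the final comparison of the two estimates.
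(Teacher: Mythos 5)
Your proposal is correct and follows essentially the same route as the paper: the two expressions under the minimum are established separately, the first by reducing to the uniformly-bounded-subgradient estimate $\frac{\sqrt{2}\hat{\beta}_{t+1}}{t+1}(L_E\sqrt{D_E}+L_x\sqrt{D_x})$ with optimized $\tau$ and $\sigma$, and the second by replacing $\sum_j\|x_j\|_2^2\leq L\eta^2$ in the penultimate line of \eqref{bd:gE} with the trajectory bound $8(3+\sqrt{2})\,d(E^*,x^*)$. The only difference is that the ``hard part'' you flag is not re-derived in the paper: it cites \cite[Theorem 3]{Nes07} for $\|(E,x)-(E^*,x^*)\|^2\leq 2d(E^*,x^*)+\sigma^{-2}$ and obtains the constants $2(3+\sqrt{2})$ and $4+\sqrt{2}$ by the explicit choice $\sigma=1/(2\sqrt{d(E^*,x^*)})$ with $\tau=\tfrac{1}{2}$, exactly as you anticipated.
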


\begin{proof}
\begin{enumerate}
\item{Bound 1}

Let $(E^*, x^*)$ be an optimal solution.
Since
\begin{equation} \label{def:dEx}
d(E, x) = \frac{\tau}{2} \|E-rI\|_F^2 + \frac{1-\tau}{2} \|x\|_2^2,
\end{equation}
we get
$$
\ba{rcl}
\sqrt{d(E, x)} & \leq & \sqrt{d(E^*, x^*)} + \frac{1}{\sqrt{2}} \left\| (E, x) - (E^*, x^*) \right\| .
\ea
$$
In addition, \cite[Theorem 3]{Nes07} states that
$$
\ba{rcl}
 \left\| (E, x) - (E^*, x^*) \right\|^2 & \leq & 2d(E^*, x^*) + \frac{1}{\sigma^2} .
\ea
$$

Therefore,
$$
\ba{rcl}
D_{E, x} & \defeq & \max\limits_{E_i \in Q_k^{(i)}, x \in Q_x} d(E, x) \; \leq \; \left( \sqrt{d(E^*, x^*)} + \sqrt{d(E^*, x^*) + \frac{1}{2\sigma^2}} \right)^2 \\
\\
& = & 2d(E^*, x^*) + \frac{1}{2\sigma^2} + 2 \sqrt{d(E^*, x^*)^2 + d(E^*, x^*)\frac{1}{2\sigma^2}} \\
\\
& \leq & 4d(E^*, x^*) + \frac{1}{2\sigma^2} + \sqrt{2d(E^*, x^*)}\frac{1}{\sigma} .
\ea
$$
By \cite[Theorem 3]{Nes07}, we further have
$$
\ba{rcl}
\delta_t & \leq & \hat{\beta}_{t+1}(D_{E, x} \sigma + \frac{1}{2\sigma} ) \; \leq \; \hat{\beta}_{t+1} \left[ 4d(E^*, x^*) \sigma + \frac{1}{\sigma} + \sqrt{2d(E^*, x^*)} \right].
\ea
$$

Minimizing the above last term in $\sigma$, we obtain that at $\sigma = 1/(2\sqrt{d(E^*, x^*})$,
\begin{eqnarray}
\delta_t & \leq & \hat{\beta}_{t+1}(4+\sqrt{2}) \sqrt{d(E^*, x^*)} , \label{bd:WDAdelta} \\
D_{E, x} & \leq & 2(3+\sqrt{2}) d(E^*, x^*) . \label{bd:WDAs}
\end{eqnarray}

Let $\tau = \frac{1}{2}$.  By \eqref{def:dEx}, \eqref{bd:gE} and \eqref{bd:WDAs}, we have
\begin{equation} \label{bd:SDAgE}
\ba{rcl}
L_E^2 \; = \; \max\limits_{E_i \in Q_k^{(i)}, x \in Q_x} \left\| g_{E}(E, x) \right\|_{F,*}^2 & \leq & mk + 8(3+\sqrt{2})\frac{\gamma}{r} L \|B\|_2^2 d(E^*, x^*) .
\ea
\end{equation}
Therefore,
$$
\ba{rcl}
\frac{1}{\sum_{l=1}^t \alpha_l} & \leq & \frac{1}{t+1} \sqrt{2L_E^2 + 2 L_x^2} \\
\\
& \stackrel{{\rm \eqref{bd:gx}, \eqref{bd:SDAgE}}}{\leq} &
\frac{1}{t+1} \Big[2mk + 16(3+\sqrt{2}) \frac{\gamma}{r} L \|B\|_2^2 d(E^*, x^*) \\
\\
& + & 2\left(2\|f \|_2 + 2\sqrt{\gamma L (\rho_u - kr + r)} \|B\|_2 \right)^2
\Big]^{1/2} .
\ea
$$

Along with \eqref{bd:gapt} and \eqref{bd:WDAdelta},  we obtain the duality gap
$$
\ba{rcl}
0 & \leq & \max\limits_{x_j \in Q_x} F(\hat{E}^{(t+1)}, x) - \min\limits_{E_i \in Q_k^{(i)}} F(E, \hat{x}^{(t+1)}) \\
\\
& \leq &
\frac{(4\sqrt{2}+2)\hat{\beta}_{t+1} \sqrt{d(E^*, x^*)}}{t+1} \Big[mk + 8(3+\sqrt{2})\frac{\gamma}{r} L \|B\|_2^2 d(E^*, x^*) \\
\\
& + & 4\left(\|f\|_2 + \sqrt{\gamma L (\rho_u - kr + r)} \|B\|_2 \right)^2
\Big]^{1/2}
\ea
$$

\item{Bound 2}

Since
$$
\ba{rcl}
\alpha_l & \geq & \frac{1}{ \sqrt{L_E^2/\tau + L_x^2/(1-\tau)} } ,
\ea
$$

by \cite[Theorem 3]{Nes07}, we have
$$
\ba{rcl}
0 & \leq & \max\limits_{x_j \in Q_x} F(\hat{E}^{(t+1)}, x) - \min\limits_{E_i \in Q_k^{(i)}} F(E, \hat{x}^{(t+1)}) \; \leq \; \frac{\delta_t}{\sum_{l=0}^t \alpha_l}\\
\\
& \leq &
\frac{\hat{\beta}_{t+1}}{t+1}\left[ \sigma(\tau D_E + (1-\tau)D_x) + \frac{1}{2 \sigma} \right] \sqrt{L_E^2/\tau + L_x^2/(1-\tau)}  .
\ea
$$

We choose
$$
\ba{rcl}
\sigma & = & \frac{1}{\sqrt{2\tau D_E + 2(1-\tau)D_x}}, \quad
\tau \; = \; \frac{\sqrt{D_x} L_E}{\sqrt{D_E} L_x + \sqrt{D_x} L_E} .
\ea
$$
Then
$$
\ba{rcl}
0 & \leq & \max\limits_{x_j \in Q_x} F(\hat{E}^{(t+1)}, x) - \min\limits_{E_i \in Q_k^{(i)}} F(E, \hat{x}^{(t+1)}) \\
\\
& \leq &  \frac{\sqrt{2}\hat{\beta}_{t+1}}{t+1}\left( L_E \sqrt{D_E} + L_x \sqrt{D_x} \right) .
\ea
$$

From \eqref{bd:hatbeta}, \eqref{bd:gE}, \eqref{bd:gx}, \eqref{bd:DE}, and \eqref{bd:SDADx}, we have
$$
\ba{rcl}
0 & \leq & \max\limits_{x_j \in Q_x} F(\hat{E}^{(t+1)}, x) - \min\limits_{E_i \in Q_k^{(i)}} F(E, \hat{x}^{(t+1)}) \\
\\
& \leq & \frac{0.37 + \sqrt{2t+1}}{t+1} \Big[ \sqrt{m^2k + L^2 m \frac{\gamma}{r}\|B\|_2^2 \eta^2}(\rho_u - kr) + 2 \sqrt{L} \eta \|f\|_2  \\
\\
& + & 2 \sqrt{\gamma(\rho_u - kr + r)}\|B\|_2 L \eta \Big] .
\ea
$$

\end{enumerate}
\end{proof}

\subsection{Computational Cost of Each Iteration} \label{SS:ComputationCost}

The costs of each iteration of our algorithm have two components: that from calculating the subgradients and that from solving the subproblems.

\begin{enumerate}
\item \textit{Cost of updating $s^E$ and $s^x$}.
$$
\ba{rcl}
0 & \leq & \max\limits_{x_j \in Q_x} F(\hat{E}^{(t+1)}, x) - \min\limits_{E_i \in Q_k^{(i)}} F(E, \hat{x}^{(t+1)}) \\
\\
& \leq &
\frac{(4\sqrt{2}+2)\hat{\beta}_{t+1} \sqrt{d(E^*, x^*)}}{t+1} \Big[mk + 8(3+\sqrt{2})\frac{\gamma}{r} L \|B\|_2^2 d(E^*, x^*) \\
\\
& + & 4\left(\|f\|_2 + \sqrt{\gamma L (\rho_u - kr + r)} \|B\|_2 \right)^2
\Big]^{1/2}
\ea
$$

We don't keep $g_E$ and $g_x$ in memory, but update $s^E_{t+1}$ and $s^x_{t+1}$ directly.
Since $g_E$ and $g_x$ share some same components, we compute $\sum_{l=1}^{nig} B_{i,l} x_j x_j^T B_{i,l}$ and $A(E)x_j$ in the same loop.
To balance the demands between memory and speed, we compute $s^E_{t+1}$ and $s^x_{t+1}$ as follows:

\makebox[3.9in]{\hrulefill}
\begin{alltt}
do j = 1 \dots L
  if \( j \in R \)
   \( 0 \rightarrow w \)
   \( \alpha\sb{t}\sqrt{\gamma}\la A(E)x\sb{j}, x\sb{j} \ra\sp{-1/2}\rightarrow u\sb{j} \)
    do i = 1 \dots m
     \( \sum\limits\sb{l=1}\sp{nig} B\sb{i,l}x\sb{j}x\sb{j}\sp{T}B\sb{i,l}\sp{T} \rightarrow q \)
     \( w + A\sb{i}(E)x\sb{j} \rightarrow w \)
     \( s\sp{E\sb{i}} - u\sb{j}q \rightarrow s\sp{E\sb{i}} \)        (\(k(k+1)\) flops)
    end i
  else
   \( A(E)y \rightarrow w \)             (\(m{\cdot}nig(4kN+2k\sp{2})\) flops)
  end if
 \( s\sp{x\sb{j}} + 2(\alpha\sb{t}f\sb{j} - u\sb{j}w) \rightarrow s\sp{x\sb{j}} \) (\(5N\) flops)                           
end do j
\(s\sp{E} + \alpha\sb{t}I\sb{k} \rightarrow s\sp{E}\)               (\(mk\) flops)
\end{alltt}
\makebox[3.9in]{\hrulefill}\\

\vspace{12pt}

The inner products $\la A(E) x_j, x_j \ra$ are computed as follows:

\makebox[3.5in]{\hrulefill}
\begin{alltt}
\(0 \rightarrow u \)
do i = 1 \dots m 
  do l = 1 \dots nig
    \( B\sb{i,l}x\sb{j} \rightarrow v \)     (\(2kN\) flops)
    \( E\sb{i}v \rightarrow p \)       (\(2k\sp{2}\) flops)
    \( u+v\sp{T}p \rightarrow u \)   (\(2k\) flops)
  end do l
end do i
output s 
\end{alltt}
\makebox[3.5in]{\hrulefill}

In the algorithm, we keep the value $\sqrt{\gamma}$ in memory.
Therefore, the arithmetic costs of calculating $\alpha_t \sqrt{\gamma}\la A(E)x_j, x_j \ra^{-1/2}$ for $j=1, \dots, L$ are $\big( 2L \cdot m \cdot nig \cdot [kN + k^2 + k] + 4 L \big)$ flops.
The total length of auxiliary vectors $v, p$ and $u_j$ is $(N+k+L)$.
After computing the $u_j$'s, memory for $v$ and $p$ can be released.

We compute $(\sum_{l=1}^{nig} B_{i,l} x_j x_j^T B_{i,l}^T)$ for $g_E$ and $A(E)x_j$ for $g_x$ in the same loop; i.e., the update of $q$ and $w$ in loop $i$ of the above algorithm is done as follows:

\makebox[3.5in]{\hrulefill}
\begin{alltt}
\(q \rightarrow 0\)
do l = 1 \dots nig
 \( B\sb{i,l}x\sb{j} \rightarrow v \)               (\(2kN\) flops)
 \( q + vv\sp{T} \rightarrow q \)            (\(k(k+1)\) flops)
 \( E\sb{i}v \rightarrow v \)                 (\(2k\sp{2}\) flops)
 \( w + B\sb{i,l}\sp{T}v \rightarrow w \)          (\(2kN\) flops)
end do l
\end{alltt}
\makebox[3.5in]{\hrulefill}

The above \texttt{l} loop takes a total of $nig(4kN + 3k^2 + k)$ flops.  And it is executed at most $Lm$ times.  The total length of the auxiliary vectors $v, q$ and $w$ is $(k+\frac{k(k+1)}{2}+N)$. 

Adding all together, we get that the total number of flops used in updating $s^E$ and $s^x$ is at most $(6kL\cdot nig) mN + [(5k^2+3k)L \cdot nig + (k^2+k)L+ k]m + (5L)N + 4L$.  And at most $\big(\frac{1}{2} k^2 +\frac{3}{2}k + N +L\big)$ auxiliary storage space units are used.

\item \textit{Cost of solving the subproblems.}

For $t=0, \dots$ , from the closed-from solution \eqref{eq:solutionx} given in \S~\ref{S:subsolution}, we obtain that it takes $L(3N+7)$ flops to compute $x^{(t+1)}$.  The value of $\beta_{t+1} \tau$ is stored for calculating $E^{(t+1)}$ later.

Now we consider the worst-case complexity of computing $E^{(t+1)}$.
By the representation of $E^{(t+1)}$, it is obvious that the most computation is needed when
$$
\lambda < 0 ,  \qquad  \sum_{i=1}^k \lambda_i < \beta_{t+1} \tau(kr - \rho_u) .
$$ 
Comparing $\sum_{q \in \bar{M}_0} \lambda_q$ with $\beta_{t+1} \tau (kr - \rho_u)$ and $\beta_{t+1} \tau (kr - \rho_l)$ takes $(2k+7)$ flops and $3$ auxiliary storage space units, since we can keep $kr$ as an intermediate result.  Similarly to the analysis in \S\S\ref{SS:MProj} of \textit{Appendix: Matrix Projection}, we can obtain the complexity of \textit{Algorithm projSyml} as follows:  \textit{Step 1} takes at most $k(k-1)$ comparisons and exchanges. Because we have already calculated $\beta_{t+1} \tau(kr-\rho_u)$, $2$ additions and subtractions are needed to obtain $T$.  \textit{Step 2} takes at most $3(k-1)$ flops.  \textit{Step 3} takes at most $(2 + 3k)$ steps.
Therefore, a total of at most $(k^2 + 7k+8)$ flops are needed to obtain $\omega$.  And $(k + 4)$ auxiliary space units are needed to store the sorted index set, $T$, $\beta_{t+1} \tau$, $\beta_{t+1} \tau(kr - \rho_u)$, $q$, since we overwrite the memory storing $\beta_{t+1}\tau(kr - \rho_l)$ by $T$.

Eigenvalue decomposition of $s_{t+1}^{E_i}$ takes about $9k^3$ flops and $k^2+2k+1$ auxiliary storage space units.  Computing $U \diag(\omega) U^T$ takes about $(k^2(k+1)+k^2)$ flops.
Therefore, at most $m(10 k^3 + 3k^2 + 7k +8)$ flops and $(k^2 + 2k + 1)$ auxiliary storage space units are needed to obtain $E^{(t+1)}$.

\end{enumerate}

For problem \eqref{MLSDP}, $k$ equals to $3$ or $6$; $L$ and $nig$ are much smaller than $m$ or $N$.  After omitting small-order terms, we then conclude that about $(6kL\cdot nig)mN$ flops are needed for each iteration of our algorithm.  And the auxiliary storage space units are about $N$.

On the other hand, to evaluate $\la A(E)^{-1} f_j, f_j \ra$ presented in the original formula \eqref{MLSDP}, we need to first form the matrix $A(E)$, which requires $m \cdot nig \cdot[2k^2N + (k + \frac{1}{2})N(N+1)]$ flops: computing $E_iB_{i,l}$ takes $2k^2N$ flops; calculating $B_{i,l}^T E_i B_{i,l}$ for $kN(N+1)$ flops; adding the $m \cdot nig$ matrices $B_{i,l}^T E_i B_{i,l}$ together requires $\frac{1}{2} m \cdot nig N(N+1)$ flops. 
An auxiliary vector of size $\frac{N(N+1)}{2}$ is needed to store $A(E)$.
We then compute the Cholesky factorization of $A(E) = CC^T$, which takes $\frac{N^3}{3}$ flops.  Next we compute $z_j = C^{-T}(C^{-1} x_j)$ (for $j=1, \dots, L$), which needs $2LN^2$ flops.  Finally, the inner products $\la z_j, x_j \ra$ takes $2LN$ flops to compute.
Therefore, a total of $m \cdot nig \cdot[2k^2N + (k + \frac{1}{2})N(N+1)] + \frac{N^3}{3} + 2L(N^2+N)$ flops and an auxiliary vector of size $\frac{N(N+1)}{2}$ are required to compute $\la A(E)^{-1} f_j, f_j \ra$ (for $j=1, \dots, L$). 
After omitting small-order terms, we conclude that about $\frac{1}{3}N^3$ flops and $\frac{1}{2} N^2$ auxiliary storage space units are needed to obtain $\la A(E)^{-1} f_j, f_j \ra$.

In summary, the number of flops and auxiliary storage space units per iteration of our algorithm are both one order smaller than that for evaluating $\la A(E)^{-1} f_j, f_j \ra$.
Furthermore, if the matrices $B_{i,l}$ are sparse, computational work per iteration and auxiliary storage space requirement of our algorithm will be even smaller.

\section{Penalized Lagrangian} \label{S:penalty}
Because $\la A(E)^{-1} f_j, f_j \ra$ is convex in $E$, and the function $( [\sqrt{a}-\gamma^{1/2}]_+ )^2$ is convex and increasing in $a$, we conclude that
$\big([\la A(E)^{-1} f_j, f_j \ra^{1/2} - \gamma^{1/2}]_+ \big)^2$ is convex in $E$; see, for instance~\cite[Proposition 2.1.8]{MR1261420}. 
To have a faster rate of convergence to feasibility, we add to the objective of \eqref{MLSDP} a convex penalty function for the compliance constraint:
$$
\ba{c}
\sum\limits_{j=1}^L \nu \left( \left[ \la A(E)^{-1}f_j, f_j \ra^{1/2} - \gamma^{1/2} \right]_+ \right)^2 ,
\ea
$$
where $\nu > 0$ is the penalty parameter.

Then the Lagrangian becomes
$$ \ba{rcl}
p(E, x) & \defeq & F(E, x) + 
\sum\limits_{j=1}^L \nu \left( \left[ \la A(E)^{-1}f_j, f_j \ra^{1/2} - \gamma^{1/2} \right]_+  \right)^2,
\ea
$$
which is convex in $E$ and concave in $x$.
And a solution to 
$$
\ba{c}
\min\limits_{\stackrel{E_i \in Q_k^{(i)}}{k=1, \dots, m}} \max\limits_{\stackrel{x_j \in \mathbb{R}^N,}{j=1, \dots, L}} p(E, x)
\ea
$$
approximate that of model~\eqref{MLSDP}.

The gradient of $p(E, x)$ at $(E, x)$ is
$$
\ba{rcl}
\nabla_{E_i} p(E,x) & = & g_{E_i}(E,x)  -  \sum\limits_{j \in W_E} \nu \left[ 1 - \gamma^{1/2} / \la A(E)^{-1}f_j, f_j \ra^{1/2} \right]_+  
\\
\\
& & \cdot \left( \sum\limits_{l=1}^{nig} B_{i,l} A(E)^{-1} f_j f_j^T A(E)^{-1} B_{i,l}^T \right)  \quad i = 1, \dots, m ,
\\
\\
\nabla_{x_j} p(E,x) & = & g_{x_j} (E, x)  \quad j = 1, \dots, L .
\ea
$$

Similar to Lemma~\ref{lm:bdL}, we have the following results about the bounded version of penalized Lagrangian method.
\begin{lemma}
Let $(\tilde{E}, \tilde{x})$ be a solution to 
\begin{equation} \label{PBSPF}
\min\limits_{\stackrel{E_i \in Q_k^{(i)}}{i=1, \dots, m}} \max\limits_{\stackrel{\|x_j\| \leq \eta,}{j=1, \dots, L} } p(E, x) .
\end{equation}

Let $f^*$ be the optimal value of \eqref{MLSDP}.
\begin{enumerate}
\item \label{item:bEInt}
If $\|\tilde{x}_j\| < \eta$ for $j=1, \dots, L$; then $\tilde{E}$ is a solution to the original problem.
\item \label{item:bEBd}
Otherwise, $\tilde{E}$ has the following properties:
\begin{enumerate}
\item
$ F(\tilde{E}) \leq  F^*$.
\item
$\sum\limits_{j \in W_{\tilde{E}}} \left( \la f_j, A(\tilde{E})^{-1} f_j \ra^{1/2} - \gamma^{1/2} \right) 
\leq$ \\
$1/\left[
\sqrt{\frac{\nu}{(f^*-m\rho_l)|W_{\hat{E}}|} + \frac{ r^2\lambda^2_{\min}(BB^T)\eta^2 }{ (f^* - m\rho_l)^2 } }
+ \frac{ r\lambda_{\min}(BB^T)\eta }{ f^* - m\rho_l } 
\right] $ .
\end{enumerate}
\end{enumerate}
\end{lemma}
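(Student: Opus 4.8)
The plan is to mirror the proof of Lemma~\ref{lm:bdL}, the only new feature being the penalty. Since the penalty term in $p$ does not depend on $x$, the inner maximum of the bounded problem \eqref{PBSPF} splits as $\max_{\|x_j\|\le\eta} p(E,x) = \max_{\|x_j\|\le\eta} F(E,x) + \sum_{j=1}^L \nu\big([\la A(E)^{-1}f_j,f_j\ra^{1/2}-\gamma^{1/2}]_+\big)^2$, where the second summand is exactly $\nu\sum_{j\in W_E}(\la f_j,A(E)^{-1}f_j\ra^{1/2}-\gamma^{1/2})^2$. Item~\ref{item:bEInt} is handled as in Lemma~\ref{lm:bdL}: if no bound $\|\tilde x_j\|\le\eta$ is active then $\tilde x$ maximizes $p(\tilde E,\cdot)$ over all of $\mathbb{R}^{LN}$, so $\max_x F(\tilde E,x)<\infty$, forcing $W_{\tilde E}=\emptyset$; then $\tilde E$ is feasible with vanishing penalty and the min--max equivalence shows $\tilde E$ solves \eqref{MLSDP}.

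For item~\ref{item:bEBd}(a) I would first establish the upper estimate $p(\tilde E,\tilde x)\le f^*$. Let $E^*$ be optimal for \eqref{MLSDP}; it is feasible, so $W_{E^*}=\emptyset$ and its penalty vanishes, whence $\max_{\|x_j\|\le\eta}p(E^*,x)=\max_{\|x_j\|\le\eta}F(E^*,x)\le F^*=f^*$. By optimality of $(\tilde E,\tilde x)$ for \eqref{PBSPF}, $p(\tilde E,\tilde x)=\min_E\max_{\|x_j\|\le\eta}p(E,x)\le\max_{\|x_j\|\le\eta}p(E^*,x)\le f^*$. Since the penalty is nonnegative, the bounded inner value $F(\tilde E):=\max_{\|x_j\|\le\eta}F(\tilde E,x)$ satisfies $F(\tilde E)\le p(\tilde E,\tilde x)\le f^*$, which is (a).

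The core is (b). Write $S:=\sum_{j\in W_{\tilde E}}(\la f_j,A(\tilde E)^{-1}f_j\ra^{1/2}-\gamma^{1/2})\ge0$. Evaluating $p(\tilde E,\cdot)$ at the same admissible point $x$ used in Lemma~\ref{lm:bdL} and reusing the two lower bounds established there, namely $\la I,E\ra\ge m\rho_l$ and $\la f_j,A(E)^{-1}f_j\ra^{1/2}/\|A(E)^{-1}f_j\|\ge r\lambda_{\min}(BB^T)$, the $F$-part contributes at least $m\rho_l+2r\lambda_{\min}(BB^T)\eta\,S$, while the penalty part equals $\nu\sum_{j\in W_{\tilde E}}(\la f_j,A(\tilde E)^{-1}f_j\ra^{1/2}-\gamma^{1/2})^2$. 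A Cauchy--Schwarz inequality over the $|W_{\tilde E}|$ nonnegative summands bounds this last sum below by $S^2/|W_{\tilde E}|$. Combining these with the upper bound $p(\tilde E,\tilde x)\le f^*$ from (a) yields the quadratic inequality $\frac{\nu}{|W_{\tilde E}|}S^2+2r\lambda_{\min}(BB^T)\eta\,S-(f^*-m\rho_l)\le0$.

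Finally I would solve this quadratic for its nonnegative root and rationalize. The main obstacle is purely algebraic bookkeeping: the naive bound $S\le\big(-b+\sqrt{b^2+4a(f^*-m\rho_l)}\big)/(2a)$, with $a=\nu/|W_{\tilde E}|$ and $b=2r\lambda_{\min}(BB^T)\eta$, must be multiplied by its conjugate and then divided through by $f^*-m\rho_l$ to reach the reciprocal form stated (here $\hat E$ denotes $\tilde E$). As a consistency check one verifies that at $\nu=0$ this degenerates to the linear bound $\frac{f^*-m\rho_l}{2r\lambda_{\min}(BB^T)\eta}$ of Lemma~\ref{lm:bdL}, confirming that the only genuinely new ingredient is promoting the linear estimate on $S$ to a quadratic one via the penalty and Cauchy--Schwarz.
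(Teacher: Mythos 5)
Your proposal is correct and follows essentially the same route as the paper: the same test point $x_j=\frac{\eta}{\|A(E)^{-1}f_j\|}A(E)^{-1}f_j$ on $W_{\tilde E}$, the same two lower bounds $\la I,E\ra\ge m\rho_l$ and $\la f_j,A(E)^{-1}f_j\ra^{1/2}/\|A(E)^{-1}f_j\|\ge r\lambda_{\min}(BB^T)$, and the same Cauchy--Schwarz step reducing $\nu\sum_j w_j^2$ to $\nu S^2/|W_{\tilde E}|$, leading to the identical quadratic inequality in $S$ (the paper completes the square where you invoke the quadratic formula, which is only a cosmetic difference). Your explicit treatment of item~1 and the $\nu=0$ consistency check are sensible additions but do not change the argument.
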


\begin{proof}
Proof for Item~\ref{item:bEInt} is the same as that for Lemma~\ref{lm:bdL}.
Item~\ref{item:bEBd} can be proved similarly as Lemma~\ref{lm:bdL}.
Below, we briefly give the proof.

For any fixed $E \in Q$, the point
$$
\ba{rcl}
x_j & = & \begin{cases} \frac{\eta}{\|A(E)^{-1}f_j\|}A(E)^{-1}f_j & j \in W_E \\
0 & j \notin W_E \end{cases}
\ea
$$
is feasible to
$$
\max\limits_{\stackrel{\|x_j\| \leq \eta,}{ j=1, \dots, L} } p(E, x) 
$$
with objective value
$$
\ba{rcl}
p_x(E)  \defeq  \la I, E \ra + 
\sum\limits_{j \in W_E} \nu w_j^2(E) +2 \frac{\la f_j, A(E)^{-1} f_j \ra^{1/2} }{\|A(E)^{-1}f_j\|} \eta w_j(E) ,
\ea
$$
where
$$
\ba{rcl}
w_j(E) & \defeq & 
\la f_j, A(\tilde{E})^{-1} f_j \ra^{1/2} - \gamma^{1/2} , \quad
j \in W_E .
\ea
$$
Therefore,
$$
\ba{rcl}
F^* -  m \rho_l & \geq & \sum\limits_{j \in W_{\tilde{E}}} \nu w_j^2(\hat{E}) + 2r\lambda_{\min}(BB^T) \eta w_j(\hat{E}) ,
\ea
$$
from which we obtain
$$
\ba{c}
F^* -  m \rho_l + |W_{\hat{E}}|  r^2\lambda^2_{\min}(BB^T) \eta^2/\nu  
\; \geq \; \sum\limits_{j \in W_{\tilde{E}}} \nu \left[ w_j(\hat{E}) + r\lambda_{\min}(BB^T)\eta / \nu \right]^2 \\
\\
\; \geq \; \frac{\nu}{|W_{\hat{E}}|} \left[ \sum\limits_{j \in W_{\tilde{E}}}  w_j(\hat{E}) + |W_{\hat{E}}| r\lambda_{\min}(BB^T)\eta / \nu \right]^2 .
\ea
$$
Hence,
$$
\ba{rcl}
\sum\limits_{j \in W_{\tilde{E}}}  w_j(\hat{E}) & \leq &
1/\left[
\sqrt{\frac{\nu}{(f^*-m\rho_l)|W_{\hat{E}}|} + \frac{ r^2\lambda^2_{\min}(BB^T)\eta^2 }{ (f^* - m\rho_l)^2 } }
+ \frac{ r\lambda_{\min}(BB^T)\eta }{ F^* - m\rho_l } 
\right] .
\ea
$$

\end{proof}
Observe that as $\eta \rightarrow + \infty$ and $\nu \rightarrow + \infty$, the set of saddle-points of \eqref{PBSPF} approaches that of \eqref{MLSDP}.

We can apply the preceding algorithm to obtain a saddle-point of $p(E, x)$ as well.  And its subproblems have closed-form solutions.

\paragraph{Bounds on duality gaps}
To estimate the duality gap of each iteration, We first bound $\nabla_E p(E,x)$ as follows
$$
\ba{rcl}
\left\| \nabla_E p(E,x) \right\|_{F, *} & \leq & L_E + \nu \left[
\sum\limits_{i=1}^m  \tr \left( \sum\limits_{j=1}^L
\sum\limits_{l=1}^{nig} B_{i,l} A(E)^{-1} f_j f_j^T A(E)^{-1} B_{i,l}^T \right)^2
\right]^{1/2} \\
\\
& \leq & L_E +
\nu \tr \left( \sum\limits_{i=1}^m \sum\limits_{j=1}^L 
\sum\limits_{l=1}^{nig} B_{i,l} A(E)^{-1} f_j f_j^T A(E)^{-1} B_{i,l}^T \right)
\\
\\
& = & L_E +
\nu \sum\limits_{j=1}^L \sum\limits_{i=1}^m 
\sum\limits_{l=1}^{nig} 
f_j^T A(E)^{-1} B_{i,l}^T  B_{i,l} A(E)^{-1} f_j  \\
\\
& \stackrel{\lambda_{\min}(E) = r}{\leq} & L_E + \nu /r \sum\limits_{j=1}^L f_j^T A(E)^{-1} f_j  \\
\\
&\leq&
L_E + \frac{\nu}{r^2\lambda_{\min}(B^TB)} 
\sum\limits_{j=1}^L \|f_j\|_2^2 ,
\ea
$$
where the last inequality is from
$$
\lambda_{\min} \left( A(E) \right) \geq \lambda_{\min}(E) \lambda_{\min}(B^TB)\; = \; r\lambda_{\min}(B^TB) .
$$

By \eqref{bd:DE}, \eqref{gap:delta}, \eqref{gap:mmxs}, and \eqref{gap:mmxw} in \S\ref{S:boundd}, we obtain that the duality gaps of the iterates are bounded as follows: for $t = 0, \dots$,
$$
\ba{rcl}
0 \; \leq \; \max\limits_{x_j \in Q_x} F(\hat{E}^{(t+1)}, x) - \min\limits_{E_i \in Q_k^{(i)}} F(E, \hat{x}^{(t+1)})
\; \leq \; gap +
\frac{0.37 + \sqrt{2t+1}}{t+1} \frac{\sqrt{m}(\rho_u - kr)\nu}{r^2 \lambda_{\min}(B^TB)} \sum\limits_{j=1}^L \|f_j\|_2^2 . 
\ea
$$

\paragraph{Cost of each iteration.}
Compared with \eqref{BSPF}, extra computation is needed to calculate $\la A(E)^{-1} f_j, f_j \ra$ for $j=1, \dots, L$ in order to solve \eqref{PBSPF}, which is $\mathcal{O}(N^3)$ flops; see the analysis at the end of \S\ref{S:boundd}.  Therefore, the total cost of each iteration for solving \eqref{PBSPF} is $\mathcal{O}(N^3)$ flops and $\mathcal{O}(N^2)$ memory space units.

\section{Numerical Examples} \label{S:computing}
Below, we present some computational examples which are done in the MATLAB environment on a windows PC.
For each run, the starting point is as follows:  We choose $E_0$ to be the identity matrix with trace equals to the upper bound of trace.  For $j=1, \dots, L$, we let $x_j$ be a vector with the same element and  $ \|x_j\|_2 = \eta$. 

Figure~\ref{fig:tc18s1} shows how the objective value and the violation of constraints vary with the number of iterations.
The problem instance is \texttt{tc18\_s1} from the academic test library of the Plato project (\url{www.plato-n.org}) with $m=128$, $N=298$, $L=1$, $nig=4$. 

\begin{figure}[!ht] 
\begin{center}
\caption{An Example From the Academic Test Library} \label{fig:tc18s1}
\includegraphics[scale=.6]{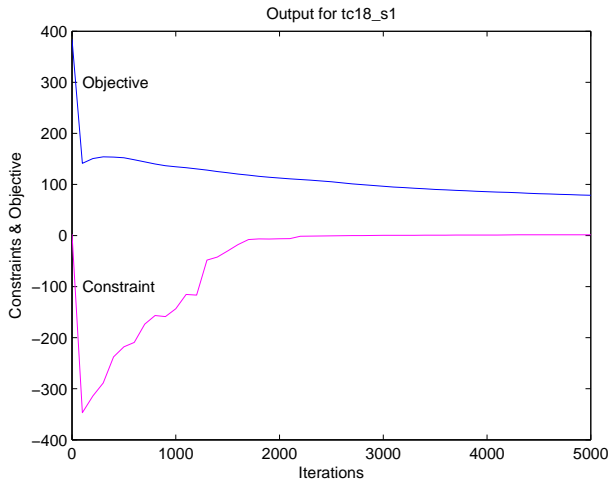}
\end{center}
\end{figure}
The figure shows that during the first few iterations the objective value decreases but the constraint violation increases rapidly,
where constraint violation is measured by $\sum_{j=1}^L \min \big[ (\la A(E)^{-1} f_j, f_j \ra - \gamma), 0 \big]$.
With iterations moving on, the constraint violation decreases with the objective value.
\FloatBarrier
In Table~\ref{Tb:atl} and Table~\ref{Tb:pl}, we present further numerical results on problems in the academic test library of the Plato project (\url{www.plato-n.org}).
In the tables, columns `cpu' give the total CPU times in seconds; columns `obj' give the final objective values; column `obj0' gives the initial objective values; column `const' indicates whether the constraints are satisfied or not for the final solutions: `f' means feasible.
We compare formulas \eqref{BSPF} and \eqref{PBSPF} on some infeasible problems, because constraints of these problems are difficult.
The results are presented in Table~\ref{Tb:pl}.
For each instance, we run $5000$ gradient iterations.
In Table~\ref{Tb:pl}, columns `const' give the sum of the values of the violation of constraints; i.e. $\sum_{j=1}^L \min \big[ (\la A(E)^{-1} f_j, f_j \ra - \gamma), 0 \big]$.  Columns `obj-0' and `const-0' give objective values and the sum of values of the violation of constraints for the initial solutions.  Columns `obj-p' and `const-p'  give objective values and the sum of the values of the violation of constraints of the final solutions obtained by model~\eqref{PBSPF}.  Columns `obj' and `const' give objective values and the sum of the values of the violation of constraints of the final solutions obtained by model~\eqref{BSPF}.

\begin{table}[htbp]
\begin{center}
\caption{Examples on Problems in Academic Test Library} \label{Tb:atl}
\begin{tabular} {|c|c|c|c|c|c||c|c|c|} \hline
\multicolumn{6}{|c||}{problem} & \multicolumn{3}{c|}{gradient method} \\
\hline
prob& m & N & L & nig & obj0  & cpu & obj & const \\ \hline
tc01\_s1 & 96 & 216 & 1 & 4 & 288  & 2.77e+2 & 61.21 & f \\
tc01\_s2 & 384 & 816 & 1 & 4 & 1152 & 1.73e+3 & 8.82e+2 & f \\
tc02\_s1 & 96 & 216 & 1 & 4 & 288 & 2.96e+2 & 4.29 & f \\
tc02\_s2 & 384 & 816 & 1 & 4 & 1152 & 1.85e+3 & 6.43 & f \\
tc03\_s1 & 96 & 216 & 1 & 4 & 288  & 2.77e+2 & 60.12 & f \\
tc03\_s2 & 384 & 816 & 1 & 4 & 1152 & 1.68e+3 & 421.08 & f \\
tc04\_s1 & 300 & 670 & 1 & 4 & 900 & 1.26e+3 & 546.79 & f \\
tc05\_s1 & 800 & 1719 & 1 & 4 & 2.4e+3 & 5.19e+3 & 1.54e+3 & f \\
tc07\_s1 & 800 & 1680 & 1 & 4 & 2.4e+3  & 5.11e+3 & 6.75e+2 & f \\
tc08\_s1 & 128 & 272 & 1 & 4 & 384 & 3.79e+2 & 17.42 & f \\
tc08\_s2 & 512 & 1056 & 1 & 4 & 1536 & 2.48e+3 & 8.78e+2 & f \\
tc14\_s1 & 100 & 248 & 1 & 4 & 300 &  3.2e+2 & 51.98 & f \\
tc14\_s2 & 400 & 898 & 1 & 4 & 1200  &  1.8e+3 & 161.46 & f \\
tc16\_s1 & 128 & 300 & 1 & 4 & 384  &  3.9e+2 & 50.73 & f \\
tc16\_s2 & 512 & 1116 & 1 & 4 & 1536 &  2.74e+3 & 973.14 & f \\
tc17\_s1 & 128 & 300 & 1 & 4  & 384 & 4.14e+2 & 1.66e+2 & f \\
tc17\_s2 & 512 & 1116 & 1 & 4 & 1536  & 2.688e+3 & 5.54e+2 & f \\
tc18\_s1 & 128 & 298 & 1 & 4 & 384  & 4.0e+2 & 74.81 & f \\
tc18\_s2 & 512 & 1114 & 1 & 4 & 1536  & 2.57e+3 & 418.78 & f \\
tc18sl\_s1 & 128 & 298 & 1 & 4 & 384 &  3.85e+2 & 0.79 & f \\
tc18sl\_s2 & 512 & 1114 & 1 & 4 & 1536 & 2.57e+3 & 62.76 & f \\
tc03\_s1 & 96 & 216 & 2 & 4 & 288  & 4.24e+2 & 65.01 & f \\
tc03\_s2 & 384 & 816 & 2 & 4 & 1152  & 3.19e+3 & 739.23 & f \\
tc06\_s1 & 800 & 1719 & 3 & 4 & 2.4e+3  & 1.37e+4 & 1.5e+3 & f \\
tc16\_s1 & 128 & 300 & 2 & 4 & 384  &  5.91e+2 & 183.30 & f \\
tc16\_s2 & 512 & 1116 & 2 & 4 & 1536  &  4.32e+3 & 298.33 & f \\
tc17\_s1 & 128 & 300 & 2 & 4 & 384  & 5.99e+2 & 211.52 & f \\
tc17\_s2 & 512 & 1116 & 2 & 4 & 1536 & 4.04e+3  & 566.61  & f \\
tc09\_s1 (3d) & 100 & 567 & 4 &  8 & 300 & 2.56e+3 & 73.95 & f \\
tc09\_s2 (3d) & 512 & 2250 & 4 & 8 & 1536 & 3.66e+4 & 417.6 & f \\
tc10\_s1 (3d) & 100 & 567  & 2  & 8 & 300  & 1.62e+3 & 51.71 & f \\
\hline
\end{tabular}
\end{center}
\end{table}

\begin{sidewaystable}[htbp]
\begin{center}
\caption{Results With and Without Penalty Function} \label{Tb:pl}
\begin{tabular} {|c|c|c|c|c|c|c||c|c|c||c|c|c|} \hline
\multicolumn{7}{|c||}{problem} & \multicolumn{3}{c||}{with penalty} & \multicolumn{3}{c|}{without penalty} \\
\hline
prob & m & N & L & nig & obj-0 & const-0 & cpu-p & obj-p & const-p & cpu & obj & const \\ \hline
bmat2x2 (1) & 4 & 114 & 2 & 4 & 20 & -10.99e+3 & 22.38 & 20 & -6.87e+3 & 15.89 & 10.09 & -1.7e+6 \\
bmat2x2 (2) & 4 & 114 & 2 & 4 & 12 & -6.77e+6 & 22.55 & 12 & -4.4e+6 & 15.25 & 5.53 & -2.52e+9 \\
bmat2x2 (3) & 4 & 114 & 2 & 4 & 12 & -8.75e+4 & 24.17 & 12  & -1.02e+5 & 17.64 & 11.78 & -8.03e+5 \\
bmat1 (1) & 16 & 40 & 1 & 4 & 80 & -1.21e+4 & 63 & 79.93 & -4.5e+3 & 41.89 & 20.09 & -1.14e+7 \\
bmat1 (2) & 16 & 40 & 2 & 4 & 80 & -3.17e+3 & 88.89 & 79.99 & -3.39e+3 & 59.73 & 25.16 & -1.25e+5\\
bmat1 (3) & 16 & 40 & 2 & 4 & 48 & -3.3e+6 & 86.58 & 47.99 & -3.04e+6 & 59.66 & 36.29 & -4.48e+6 \\
bmat2 (1) & 200 & 440 & 1 & 4 & 600 & -1.98e+1 & 2.32e+4 & 573.07 & -6.49 & 661.6 & 185.74 & -6.44+3
\\
bmat2 (2) & 200 & 440 & 2 & 4 & 600 & -3.02e+2 & 2.33e+4 & 303.04 & -2.18e+2 & 1.03e+3 & 103.26 & -2.82e+4
\\
bmat2 (3) & 200 & 440 & 2 & 4 & 1.0e+3 & -7.3e+4 & 2.45e+4 & 785.73 & -5.52e+4 & 1.03e+3 & 188.49 &-1.49e+7
\\
bmat (1) & 400 & 850 & 1 & 4 & 2.0e+3 & -9.11 & 1.64e+5 & 1991.12 & -7.17 & 1.77e+3 & 794.46 & -6.07e+3 \\
bmat (2) & 400 & 850 & 1 & 4 & 1.2e+3 & -2.94e+2 & 1.66e+5 & 1175.92 & -1.84e+2 & 2.79e+3 & 1121.55 & -3.8e+2 \\
bmat (3) & 400 & 850 & 1 & 4 & 1.2e+3 & -2.72e+2 & 1.66e+5 & 1187.93 & -1.72e+2 & 3.14e+3 & 218.68 & -4.18e+2 \\
bmat1g & 400 & 850 & 1 & 4 & 600 & -1.22e+5 & 2.35e+4 & 488.4 & -9.28e+4 & 1.07e+3 & 118.96 & -5.78e+6 \\
\hline
\end{tabular}
\end{center}
\end{sidewaystable}

From the results in Table~\ref{Tb:pl}, we see that the penalized Lagrangian can produce a better solution for infeasible problems, although it may not be the case for feasible problems.  The penalty term forces iterates to move to the feasible region.  On the other hand, because each iteration is much cheaper without calculating $\la A(E)^{-1} f_j, f_j \ra$, the penalized Lagrangian takes longer to solve a problem instance of FMO.
The larger the dimension of the problem, the less time model~\eqref{BSPF} used compared with model~\eqref{PBSPF}.

\setcounter{section}{1}
\alph{subsection}
\section*{Acknowledgement}
We thank the associate editor and anonymous referees for their helpful comments and suggestions.

\section*{Appendix: Matrix Projection} \label{S:mproj}
Let $\Herm^n$ denote the space of $n \times n$ Hermitian matrices.
We take the standard inner product on the space of complex square matrices of order $n$ (or linear operators between Hilbert spaces of same dimension): $\forall \, U, V \in \mathbb{C}^{n\times n}$,
$$
\ba{rcl}
\la U, V \ra & = & \tr (UV^*) ,
\ea
$$
where $V^*$ is the conjugate transpose of $V$. Let $\|\cdot \|_F$
denote the corresponding Frobenius norm.  In this part, we give a closed-form solution to the following projection problem:
\begin{equation} \label{eq:projH}
\ba{rc}
\min\limits_{Z \in \Herm^n} & \|Z - U \|_F \\
\ST & c_l \leq \tr(Z) \leq c_u \\
& \lambda_{\min}(Z) \geq r ,
\ea
\end{equation}
where $U$ is a square complex matrix of order $n$.

To this end, we first consider a least squares problem with nonnegativity constraint and a two sided inequality.

\subsection{Least squares with a two-sided inequality and non-negative variables}
Least squares problems have been studied intensively; however, we cannot find any reference for the problem discussed in this section elsewhere.
In this part, we first give an analytical solution of the problem; then we present an algorithm with total number of operations being a quadratic term in the dimension of problem variable.

Given $A \in \mathbb{R}^{n \times n}$ be diagonal, $b \in \mathbb{R}^n$, $w \in \mathbb{R}^n$, $r \in \mathbb{R}^n$, $c_l \in \mathbb{R} \cup \{-\infty\}$, $c_u \in \mathbb{R} \cup \{+\infty\} $ with $c_l \leq c_u$.
Let$\|\cdot \|_2$ denote the norm induced by the inner product $\la \cdot, \cdot \ra$.
In this part, we give an analytical solution for the following least squares problem:
\begin{equation} \label{eq:GLS}
\ba{rcl}
\min\limits_{z \in \mathbb{R}^n}& \left\| Az - b \right\|_2^2 \\
\\
\ST & c_l \; \leq \; \la w, z \ra \; \leq \; c_u \\
\\
& z \; \geq \; r .
\ea
\end{equation}
Note that our problem includes the one-side inequality case when $c_l=-\infty$ or $c_u = + \infty$, the lower bounded variable case when $c_l=-\infty$ and $c_u = + \infty$, one equality case when $c_l = c_u$.
Our problem also includes the case when not all variables are bounded, since we can replace an unconstrained variable $z_i \in \mathbb{R}$ by $z_i = z_i^+ - z_i^-$ with $z_i^+ \geq 0$,   $z_i^- \geq 0$.

\subsubsection{Problem Reduction}
To solve problem~\eqref{eq:GLS}, we first show that we only need to consider the case with $A$ being identity and $w_i \neq 0$ for $i=1, \dots, n$.

If there exists ($\exists$) $ a_{ii} =0, w_i =0$, we let
$$
z_i^* \; = \; r_i .
$$

If $\exists \, a_{ii} =0, w_i >0$, we let
$$
z_i^* \; = \; \max\left\{
\left[ c_l - \sum\limits_{(1 \leq j \leq n \colon a_{jj} \neq 0} w_j \max\left\{ b_j/a_{jj}, r_j \right\}
\right]_+/w_i, r_i
\right\}.
$$

If $\exists \, a_{ii} =0, w_i < 0$, we let
$$
z_i^* \; = \; \max\left\{
-\left[ \sum\limits_{(1 \leq j \leq n \colon a_{jj} \neq 0)} w_j \max\left\{ b_j/a_{jj}, r_j \right\}
-c_u \right]_+/w_i, r_i
\right\}.
$$

We also replace $c_u$ and $c_l$ by 
$$
c_u - w_i z_i^*, \quad c_l - w_i z_i^* .
$$

If $\exists \, a_{ii} < 0$, we replace $a_{ii}$ with $-a_{ii}$ and $b_i$ with $-b_i$.

Hence after simplification, we can assume that $A$ is a positive diagonal matrix in the text below.  And our least squares problem is equivalent to
$$
\ba{rcl}
\min\limits_{z \in \mathbb{R}^n}& \left\| z - b + Ar \right\|_2^2 \\
\\
\ST & c_l - \la w, r \ra \; \leq \; \la A^{-1} w, z \ra \; \leq \; c_u - \la w, r \ra \\
\\
& z \; \geq \; 0 .
\ea
$$ 
Therefore, for notation simplicity, we only need to consider problem~\eqref{eq:GLS} in the following form: 
\begin{equation} \label{eq:SLS}
\ba{rcl}
\min\limits_{z \in \mathbb{R}^n}& \left\| z - b \right\|_2^2 \\
\\
\ST & c_l \; \leq \; \la w, z \ra \; \leq \; c_u \\
\\
& z \; \geq \; 0 .
\ea
\end{equation}

If $w_i = 0$ for some $i \in \{1,\dots, n \}$ in problem \eqref{eq:SLS}; then the corresponding solution of $z_i$ must be $[b_i]_+$.  After determining the solutions for these elements, we thereafter assume $w_i \neq 0$ for $i=1, \dots, n$. 
\subsubsection{Analytical Solution}
In this part, we deduce the analytical solution for our least squares problem.
\begin{theorem} \label{thm:lsqg}
The solution to \eqref{eq:SLS} is
$$
z^* \; = \; 
\left[ b 
 - \frac{ \left[ \la \tilde{w}, \tilde{b} \ra - c_u \right]_+ }{\|\tilde{w}\|^2_2} w  
+\frac{\left[ c_l - \la \tilde{w}, \tilde{b} \ra \right]_+}{\|\tilde{w}\|_2^2 }w\right]_+  ,
$$
where $\tilde{w}$ and $\tilde{b}$ denote the subvectors of $w$ and $b$ with indices in the set
$$
S \; = \; \left\{ 1 \leq i \leq n \colon b_i >
 \frac{ \left[ \la \tilde{w}, \tilde{b} \ra - c_u \right]_+ }{\|\tilde{w}\|^2_2} w_i 
-\frac{\left[ c_l - \la \tilde{w}, \tilde{b} \ra \right]_+}{\|\tilde{w}\|_2^2 }w_i 
\right\} .
$$
\end{theorem}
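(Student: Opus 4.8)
The plan is to treat \eqref{eq:SLS} as a strictly convex quadratic program over its nonempty closed convex feasible set and to verify that the displayed $z^*$ is its unique KKT point. Since $\|z-b\|_2^2$ is strictly convex and all constraints are affine, a minimizer exists and is unique, and the KKT conditions are both necessary and sufficient (no constraint qualification beyond affineness is needed). First I would attach a single scalar to the two-sided linear constraint: writing $\mu\ge 0$ for $\langle w,z\rangle\le c_u$, $\nu\ge 0$ for $c_l\le\langle w,z\rangle$, and $s\ge 0$ componentwise for $z\ge 0$, stationarity of the Lagrangian reads $2(z-b)+(\mu-\nu)w-s=0$. Setting $\theta\defeq\tfrac{1}{2}(\mu-\nu)$ and invoking complementary slackness $s_iz_i=0$ with $s_i\ge 0$, this collapses componentwise to the separable formula $z_i^*=[\,b_i-\theta w_i\,]_+$, so the support is exactly $S=\{i:b_i>\theta w_i\}$, on which $z_i^*=b_i-\theta w_i$ and $s_i=0$.

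Next I would pin down $\theta$ from the linear constraint. Because $z^*$ vanishes off $S$, one has $\langle w,z^*\rangle=\langle\tilde w,\tilde b\rangle-\theta\|\tilde w\|_2^2$, where $\tilde w,\tilde b$ are the restrictions of $w,b$ to $S$. Complementary slackness forbids $\mu$ and $\nu$ from being simultaneously positive, so exactly three mutually exclusive branches arise. If both vanish, then $\theta=0$ and feasibility requires $c_l\le\langle\tilde w,\tilde b\rangle\le c_u$; if $\mu>0=\nu$, then $\langle w,z^*\rangle=c_u$ forces $\theta=(\langle\tilde w,\tilde b\rangle-c_u)/\|\tilde w\|_2^2>0$; and if $\nu>0=\mu$, then $\langle w,z^*\rangle=c_l$ forces $\theta=-(c_l-\langle\tilde w,\tilde b\rangle)/\|\tilde w\|_2^2<0$. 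These combine into the single expression $\theta=\big([\langle\tilde w,\tilde b\rangle-c_u]_+-[c_l-\langle\tilde w,\tilde b\rangle]_+\big)/\|\tilde w\|_2^2$, and substituting it into $z_i^*=[b_i-\theta w_i]_+$ and into $S=\{i:b_i>\theta w_i\}$ reproduces verbatim the claimed solution and index set.

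The one genuinely delicate point is that the set $S$ in the statement is defined self-referentially, so I must argue that the fixed-point characterization is consistent and unambiguous rather than merely formal. I would handle this by the dual/monotonicity route: for a fixed trial multiplier $\theta$, the $z\ge 0$-constrained Lagrangian is minimized at $z_i(\theta)=[b_i-\theta w_i]_+$, and the scalar residual $\phi(\theta)\defeq\langle w,z(\theta)\rangle=\sum_i w_i[b_i-\theta w_i]_+$ is continuous and nonincreasing in $\theta$, since wherever it is differentiable its derivative equals $-\sum_{i\in S(\theta)}w_i^2\le 0$. Monotonicity guarantees that the value $\theta^*$ making $(\theta^*,z(\theta^*))$ satisfy the linear-constraint complementarity is unique, whence the active set $S=S(\theta^*)$ is well defined, and the three-case computation above identifies $\theta^*$ in closed form on each region where $S(\theta)$ is locally constant. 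Finally, I would substitute the constructed tuple $(\theta,z^*,\mu,\nu,s)$ back to confirm primal feasibility, dual feasibility $\mu,\nu,s\ge 0$, and complementary slackness; sufficiency then yields that $z^*$ is the unique optimum. The main obstacle is thus not the algebra but organizing the self-referential $S$ so that the monotone residual $\phi$ renders it unambiguous and the closed-form $\theta$ is provably the correct branch.
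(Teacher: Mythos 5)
Your proposal is correct and follows essentially the same route as the paper's proof: both use Lagrange multipliers for the two-sided constraint to reduce the minimizer to the soft-thresholded form $z^*=[\,b-\theta w\,]_+$ with $\theta=\tfrac12(\lambda_u-\lambda_l)$, and both split into the same three cases according to the signs of the multipliers (interior, upper bound active, lower bound active) to identify $\theta$ in closed form. The one place you go beyond the paper is the explicit monotonicity argument for the residual $\phi(\theta)=\sum_i w_i[\,b_i-\theta w_i\,]_+$, which cleanly justifies that the self-referentially defined active set $S$ is consistent --- a point the paper's own proof leaves implicit and only resolves later via the sorting algorithm.
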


\begin{proof}
Because the constraints of problem~{eq:SLS} are linear, Lagrange multipliers exist.  Let's write down the  Lagrangian function:
$$
L(z, \lambda) \; = \; \left\|  z - b \right\|_2^2 + \lambda_l \left(c_l - \la w, z \ra \right)  + \lambda_u \left( \la w, z \ra - c_u \right) , \quad ( z \geq 0, \lambda_l \geq 0, \lambda_u \geq 0).
$$
The solutions to problem \eqref{eq:SLS} can be obtained by solving the following problem:
$$
\max\limits_{\lambda_l \geq 0, \lambda_u \geq 0} 
\min\limits_{z \geq 0} 
L(z, \lambda) .
$$
Note that
$$
L(z, \lambda) = \left\| z - b + \frac{\lambda_u - \lambda_l}{2} w \right\|_2^2 - \left(\frac{\lambda_u - \lambda_l}{2} \right)^2 \left\|w \right\|_2^2 + \left(\lambda_u - \lambda_l \right) \la w, b \ra + \lambda_l c_l - \lambda_u c_u, 
$$
from which we conclude that the solution to the Lagrangian dual
$ \min\limits_{z \geq 0} L(z, \lambda) $
is
$$
z^* \; = \; \left[ b - \frac{\lambda_u - \lambda_l}{2}w \right]_+.
$$
We next determine the optimal values for $\lambda_u$ and $\lambda_l$.

We first consider $\lambda_l$.

Let $S$ denote the index set
$$
S \; \defeq \; \left\{ 1 \leq i \leq n \colon b_i > \frac{\lambda_u - \lambda_l}{2} w_i \right\} .
$$

Let $\tilde{w}$ and $\tilde{b}$ denote the subvectors of $w$ and $b$ with indices in $S$.
Let $\bar{b}$ denote the subvector of $b$ with indices not in $S$.  We then have
$$
\ba{rcl}
L(z^*, \lambda) & = & - \left( \frac{\lambda_u - \lambda_l}{2} \right)^2 \left\| \tilde{w} \right\|_2^2 + \left(\lambda_u - \lambda_l \right) \la \tilde{w}, \tilde{b} \ra + \lambda_l c_l - \lambda_u c_u + \left\| \bar{b} \right\|_2^2\\
\\
& = & - \left( \frac{\|\tilde{w}\|_2}{2}\lambda_l - \frac{ \frac{\|\tilde{w}\|_2^2}{2}\lambda_u - \la \tilde{w}, \tilde{b} \ra + c_l}{\|\tilde{w}\|_2} \right)^2
+ \left( \frac{ \frac{\|\tilde{w}\|_2^2}{2}\lambda_u - \la \tilde{w}, \tilde{b} \ra + c_l}{\|\tilde{w}\|_2} \right)^2\\
\\
& & + \left\|\bar{b}\right\|_2^2 - \frac{\|\tilde{w}\|_2^2}{4}\lambda_u^2 + \lambda_u \la \tilde{w}, \tilde{b} \ra - \lambda_u c_u .
\ea
$$
Hence a solution of $\lambda_l$ for $\max_{\lambda \geq 0} L(z^*, \lambda)$ must be in the form
$$
\lambda_l^* \ = \; \left[ \lambda_u - \frac{2}{\|\tilde{w}\|_2^2}\left( \la \tilde{w}, \tilde{b} \ra - c_l \right) \right]_+ .
$$
To determine the solution of $\lambda_u$, we consider different cases.

\paragraph{Case 1.} For $\lambda_u < \frac{2}{\|\tilde{w}\|_2^2} \left( \la \tilde{w}, \tilde{b} \ra - c_l \right)$, the representation of $\lambda_l^*$ is reduced to
$$
\lambda_l^* \; = \; 0 .
$$
Since $\lambda_u \geq 0$, we have 
$$
\la \tilde{w}, \tilde{b} \ra \; > \; c_l .
$$
And
$$
\ba{rcl}
L(z^*, \lambda_l^*, \lambda_u^*) & = & -\frac{\|\tilde{w}\|_2^2}{4}\lambda_u^2 + \la \tilde{w}, \tilde{b} \ra \lambda_u - c_u \lambda_u + \|\bar{b}\|_2^2 \\
\\
& = & - \left(\frac{\|\tilde{w}\|_2}{2}\lambda_u - \frac{\la \tilde{w}, \tilde{b}\ra - c_u}{\|\tilde{w}\|_2} \right)^2 + \left\|\bar{b}\right\|_2^2 + 
\left( \frac{\la \tilde{w}, \tilde{b}\ra - c_u}{\|\tilde{w}\|_2} \right)^2 .
\ea
$$
Therefore, for this case, the solution to $\max_{\lambda \geq 0} L(z^*, \lambda)$ is
$$
\lambda_u^* \; = \; \frac{2}{\|\tilde{w}\|_2^2} \left[ \la \tilde{w}, \tilde{b}\ra - c_u \right]_+ .
$$

\paragraph{Case 1.a.} When $c_l < \la \tilde{w}, \tilde{b} \ra < c_u$, we have
$$
\lambda_u^* \; = \;0, \quad z^* \; =\; [b]_+ .
$$
 
\paragraph{Case 1.b.} When $ \la \tilde{w}, \tilde{b} \ra \geq c_u$, we have
$$
\ba{rcl}
\lambda_u^* & = & \frac{2}{\|\tilde{w}\|_2^2} \left( \la \tilde{w}, \tilde{b} \ra - c_u \right) \\
\\
z^* & = & \left[ b - \frac{\la \tilde{w}, \tilde{b} \ra - c_u }{\|\tilde{w}\|^2_2} w \right]_+ \\
\\
\la w, z^* \ra & = & c_u .
\ea
$$

\paragraph{Case 2.} For $\lambda_u \geq \frac{2}{\|\tilde{w}\|_2^2} \left( \la \tilde{w}, \tilde{b} \ra - c_l \right)$, we have
$$
\lambda_l^* \; = \; \lambda_u -  \frac{2}{\|\tilde{w}\|_2^2} \left( \la \tilde{w}, \tilde{b} \ra - c_l \right) .
$$
And
$$
L(z^*, \lambda_l^*, \lambda_u) \; = \; \left( c_l - c_u \right) \lambda_u + \left( \frac{\la \tilde{w}, \tilde{b} \ra - c_l}{\|\tilde{w}\|_2} \right)^2 
+ \left\|\bar{b} \right\|_2^2 .
$$
Therefore, in this case, the solution to $\max_{\lambda \geq 0} L(z^*, \lambda)$ is
$$
\ba{rcl}
\lambda_u^* & = &  0 \\
\\
\lambda_l^* & = & 
2 \frac{c_l - \la \tilde{w}, \tilde{b} \ra }{\|\tilde{w}\|_2^2}\\
\\
z^* & = & \left[ b + 
\frac{c_l - \la \tilde{w}, \tilde{b} \ra }{\|\tilde{w}\|_2^2 }w\right]_+ \\
\\
\la w, z^* \ra & = & c_l .
\ea
$$
Because $\lambda_l^* \geq 0$, this case implies 
$$
\la \tilde{w}, \tilde{b} \ra \; \leq \;  c_l .
$$
Combining \textbf{Case 1} and \textbf{Case 2}, we obtain
$$
z^* \; = \; 
\left[ b 
 - \frac{ \left[ \la \tilde{w}, \tilde{b} \ra - c_u \right]_+ }{\|\tilde{w}\|^2_2} w  
+\frac{\left[ c_l - \la \tilde{w}, \tilde{b} \ra \right]_+}{\|\tilde{w}\|_2^2 }w\right]_+  ,
$$
where $\tilde{w}$ denote the subvector of $w$ with indices in the set
$$
S \; = \; \left\{ 1 \leq i \leq n \colon b_i >
 \frac{ \left[ \la \tilde{w}, \tilde{b} \ra - c_u \right]_+ }{\|\tilde{w}\|^2_2} w_i 
-\frac{\left[ c_l - \la \tilde{w}, \tilde{b} \ra \right]_+}{\|\tilde{w}\|_2^2 }w_i 
\right\} .
$$
\end{proof}

\begin{remark}
In our deduction, it is obvious that for $c_l = - \infty$, we have $\lambda_l^* = 0$;
for $c_u = + \infty$, we have $\lambda_u^* = 0$.
\end{remark}

\subsubsection{Algorithm} \label{SS:lsqgalgo}
From the discussion in the previous section, we know that to find the optimal solution $z^*$ of our least squares problem, we only need to determine the set $S$.
In this part, we describe how to find the set $S$ for our solution.

\paragraph{Properties of $S$ based on Lagrange multipliers}
We first give some simple observations which will be used later on.
\begin{proposition} \label{prop:sumadd}
Let $r_1 \in  \mathbb{R}$, $r_3 \in \mathbb{R}$, $r_2 > 0$, $r_4 > 0$.  Then
$$
\ba{rcl}
\frac{r_1}{r_2} \; > \; \frac{r_3}{r_4} & \Leftrightarrow 
\frac{r_1}{r_2} \; > \; \frac{r_3+r_1}{r_4+r_2} , \\
\\
\frac{r_1}{r_2} \; < \; \frac{r_3}{r_4} & \Leftrightarrow 
\frac{r_1}{r_2} \; < \; \frac{r_3+r_1}{r_4+r_2} . \\
\ea
$$
\end{proposition}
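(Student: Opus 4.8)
The plan is to reduce both equivalences to a single cross-multiplied inequality, exploiting that every denominator in sight is strictly positive. Since $r_2 > 0$, $r_4 > 0$, and hence $r_2 + r_4 > 0$, multiplying either side of any of these inequalities by a product of these quantities never reverses the inequality direction. This is the only sign-bookkeeping point to watch, and it is what makes the argument routine rather than delicate.

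First I would handle the left-hand inequality. Multiplying $\frac{r_1}{r_2} > \frac{r_3}{r_4}$ through by $r_2 r_4 > 0$ gives the equivalent statement $r_1 r_4 > r_2 r_3$. Next I would rewrite the right-hand inequality: multiplying $\frac{r_1}{r_2} > \frac{r_3 + r_1}{r_4 + r_2}$ through by $r_2(r_2 + r_4) > 0$ yields
$$
r_1(r_2 + r_4) \; > \; r_2(r_3 + r_1),
$$
and expanding both sides and cancelling the common term $r_1 r_2$ reduces this to $r_1 r_4 > r_2 r_3$ as well. Thus the two inequalities in the first line are equivalent, because each is equivalent to the same condition $r_1 r_4 > r_2 r_3$.

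Finally, the second line follows by the identical computation with every $>$ replaced by $<$; both inequalities reduce to $r_1 r_4 < r_2 r_3$, so no separate work is needed. There is no real obstacle here: the statement is the classical mediant inequality, and the whole content is that inserting the mediant $\frac{r_1 + r_3}{r_2 + r_4}$ between $\frac{r_1}{r_2}$ and $\frac{r_3}{r_4}$ preserves strict order on the side determined by $\frac{r_1}{r_2}$. The only care required is to confirm the positivity of the denominators before cross-multiplying, which the hypotheses $r_2, r_4 > 0$ supply directly.
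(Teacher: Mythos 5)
Your proof is correct: both inequalities in each line cross-multiply (using $r_2>0$, $r_4>0$, hence $r_2+r_4>0$) to the same condition $r_1 r_4 > r_2 r_3$ (resp.\ $<$), which establishes the equivalence. The paper states this proposition without proof, as a ``simple observation,'' so your argument supplies exactly the routine verification the authors omitted; there is nothing to compare beyond that.
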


We next give some properties of the set $S$ based on Lagrange multipliers.
Observe that $\lambda_l^*$ and $\lambda_u^*$ cannot be both positive at the same time.  We organize our analysis based on scenarios depending on the signs of the Lagrange multipliers.

\paragraph{Case 1.} $\lambda_u^* > 0$.

By the deduction above and Lagrange multiplier properties, we have the corresponding relations:
$$
\ba{rcl}
\la w, z \ra & = & c_u \\
\\
\la \tilde{w}, \tilde{b} \ra & > & c_u \\
\\
\lambda_u^* & = & 2 \frac{ \la \tilde{w}, \tilde{b} \ra - c_u}{\|\tilde{w}\|_2^2} .
\ea
$$
We next consider which indices are in the set $S$.
\begin{enumerate}
\item
$ S_1 \defeq \{i \colon w_i > 0, b_i \geq 0\}$: 

\begin{lemma}
Suppose $\frac{b_j}{w_j} \geq \frac{b_i}{w_i}$.  If $i \in S$; then $j \in S$ as well.  
\end{lemma}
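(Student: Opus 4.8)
The plan is to exploit the fact that, at the particular optimal solution under consideration, membership in $S$ is governed by a single fixed scalar threshold. We are in Case~1, where $\lambda_u^* > 0$ and $\lambda_l^* = 0$, so the defining inequality for $S$, namely $b_l > \frac{\lambda_u^* - \lambda_l^*}{2} w_l$, collapses to $b_l > t\, w_l$ with the constant
$$
t \; \defeq \; \frac{\lambda_u^*}{2} \; = \; \frac{\la \tilde{w}, \tilde{b} \ra - c_u}{\|\tilde{w}\|_2^2} \; > \; 0 .
$$
The key point is that, although $S$ was introduced self-referentially through $\tilde{w}$ and $\tilde{b}$, once we fix the optimal solution the value $t$ is simply a positive number, and the condition $l \in S$ is equivalent to the scalar inequality $b_l > t\, w_l$ for every index $l$.

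First I would restrict attention to $S_1 = \{l \colon w_l > 0,\ b_l \geq 0\}$ and note that for any $l \in S_1$ one may divide the inequality $b_l > t\, w_l$ by $w_l > 0$ without reversing it, obtaining the clean equivalence $l \in S \Leftrightarrow \frac{b_l}{w_l} > t$. This recasts membership in $S$, restricted to $S_1$, as a monotone threshold test on the ratio $b/w$.

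Then the conclusion is immediate. Assuming $i \in S$ gives $\frac{b_i}{w_i} > t$; combined with the hypothesis $\frac{b_j}{w_j} \geq \frac{b_i}{w_i}$ this yields $\frac{b_j}{w_j} \geq \frac{b_i}{w_i} > t$, whence $b_j > t\, w_j$ and therefore $j \in S$. I expect no real difficulty in the algebra; the only step deserving care is the first one—recognizing that at the optimum the recursively defined $S$ reduces to a fixed-threshold set, so that $t$ is treated as a constant, independent of whether $i$ or $j$ individually belongs to $S$. Once $t$ is pinned down as a single positive number, the monotonicity of the threshold test on $b/w$ does the remaining work.
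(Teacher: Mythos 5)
Your proof is correct and rests on the same underlying mechanism as the paper's --- membership in $S$, restricted to $S_1$, is a monotone threshold test on the ratio $b_l/w_l$ --- but you reach the conclusion more directly. The paper argues by contradiction: assuming $j \notin S$, it chains $\frac{b_j w_j}{w_j^2} \geq \frac{b_i w_i}{w_i^2} > \frac{\la \tilde{w}, \tilde{b} \ra - c_u}{\|\tilde{w}\|_2^2}$ and then invokes Proposition~\ref{prop:sumadd} (the mediant inequality) to conclude $\frac{b_j w_j}{w_j^2} > \frac{\la \tilde{w}, \tilde{b}\ra - c_u + b_j w_j}{\|\tilde{w}\|_2^2 + w_j^2}$, i.e.\ that $j$ would still pass the membership test with the threshold recomputed over the enlarged set $S \cup \{j\}$, contradicting $j \notin S$. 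You instead anchor the threshold to the fixed optimal multiplier $t = \lambda_u^*/2$ and treat the self-referential expression $\frac{\la \tilde w, \tilde b\ra - c_u}{\|\tilde w\|_2^2}$ merely as its value at the optimum, so the comparison $\frac{b_j}{w_j} \geq \frac{b_i}{w_i} > t$ finishes at once; this is legitimate because $S$ is literally defined in the theorem's proof by the scalar test $b_l > \frac{\lambda_u^* - \lambda_l^*}{2} w_l$, and for $l \in S_1$ one has $w_l > 0$ so the division is safe. What the paper's extra mediant step buys is the form of the statement actually exploited by the greedy algorithm in \S\S\ref{SS:lsqgalgo}, which must decide whether to admit $j$ into a candidate set \emph{without} knowing $\lambda_u^*$ in advance, and therefore needs the self-consistency of the recomputed threshold over $S \cup \{j\}$. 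Your argument proves the lemma as stated; to justify the incremental construction of $S$ one would still need the mediant observation or an equivalent.
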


\begin{proof}
Assume $j \notin S$.
Since $i \in S$, we have
$$
\frac{b_j w_j}{w_j^2} \; \geq  \; 
\frac{b_i w_i}{w_i^2} \; >  \; 
\frac{\la \tilde{w}, \tilde{b} \ra - c_u}{\|\tilde{w}\|_2^2} .
$$
By Proposition~\ref{prop:sumadd}, we have
$$
\frac{b_j w_j}{w_j^2} \; >  \; 
\frac{\la \tilde{w}, \tilde{b} \ra - c_u + b_j w_j}{\|\tilde{w}\|_2^2 + w_j^2} .
$$
Therefore, $j \in S$.
\end{proof}

\item
$ S_2 \defeq \{i \colon w_i > 0, b_i < 0\}$: 

By the definition of $S$, we have $S_2 \nsubseteq S$.

\item
$ S_3 \defeq \{i \colon w_i < 0, b_i \geq 0\}$: 

It is obvious $S_3 \subseteq S$.

\item
$ S_4 \defeq \{i \colon w_i < 0, b_i < 0\}$: 

\begin{lemma}
Suppose $\frac{b_j}{w_j} \leq \frac{b_i}{w_i}$.  If $i \in S$; then $j \in S$ as well.  
\end{lemma}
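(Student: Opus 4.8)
The plan is to mirror, \emph{mutatis mutandis}, the contradiction argument just used for $S_1$, but to invoke the opposite branch of Proposition~\ref{prop:sumadd} in order to absorb the sign change coming from $w_i < 0$. First I would record the membership criterion specialized to the present case: since we are in \textbf{Case 1} we have $\lambda_l^* = 0$ and $\frac{\lambda_u^*}{2} = \frac{\la \tilde{w}, \tilde{b}\ra - c_u}{\|\tilde{w}\|_2^2}$, so $i \in S$ exactly when $b_i > \frac{\lambda_u^*}{2} w_i$. For an index in $S_4$ both $w_i < 0$ and $b_i < 0$, so dividing this inequality by the negative number $w_i$ reverses its sense and gives the equivalent condition
$$\frac{b_i w_i}{w_i^2} \; = \; \frac{b_i}{w_i} \; < \; \frac{\la \tilde{w}, \tilde{b}\ra - c_u}{\|\tilde{w}\|_2^2} .$$
This is the $S_4$-analogue of the strict inequality that drove the $S_1$ proof, now with the ``$<$'' running the other way.

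Next I would argue by contradiction: suppose $i \in S$ but $j \notin S$, so that $j$ contributes to neither $\tilde{w}$ nor $\tilde{b}$. Combining the hypothesis $\frac{b_j}{w_j} \leq \frac{b_i}{w_i}$ with the membership inequality for $i$ yields
$$\frac{b_j w_j}{w_j^2} \; = \; \frac{b_j}{w_j} \; \leq \; \frac{b_i}{w_i} \; < \; \frac{\la \tilde{w}, \tilde{b}\ra - c_u}{\|\tilde{w}\|_2^2} .$$
Applying the second equivalence of Proposition~\ref{prop:sumadd} with $r_1 = b_j w_j$, $r_2 = w_j^2 > 0$, $r_3 = \la \tilde{w}, \tilde{b}\ra - c_u$, and $r_4 = \|\tilde{w}\|_2^2 > 0$ then gives
$$\frac{b_j w_j}{w_j^2} \; < \; \frac{\la \tilde{w}, \tilde{b}\ra - c_u + b_j w_j}{\|\tilde{w}\|_2^2 + w_j^2} .$$
The right-hand side is precisely the threshold $\frac{\lambda_u^*}{2}$ recomputed with $j$ adjoined to the index set; multiplying the displayed inequality back by $w_j < 0$ (which flips its sense) recovers exactly the selection condition $b_j > \frac{\lambda_u^*}{2} w_j$ for $j$. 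Hence $j$ must belong to $S$, contradicting $j \notin S$, and the lemma follows.

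The step I expect to be delicate is the same one that makes the $S_1$ argument nontrivial: the selection threshold $\frac{\la \tilde{w}, \tilde{b}\ra - c_u}{\|\tilde{w}\|_2^2}$ is \emph{self-referential}, since $\tilde{w}$ and $\tilde{b}$ are themselves determined by $S$. Proposition~\ref{prop:sumadd} is exactly the lever that resolves this, as it shows that adjoining the single index $j$ perturbs the ratio only in the direction consistent with $j$'s own membership, so the fixed-point set $S$ cannot exclude $j$ once it contains an index $i$ with a larger (less negative) ratio $b_i/w_i$. The only genuine care required is the sign bookkeeping from $w_i < 0$: it dictates using the ``$<$'' branch of Proposition~\ref{prop:sumadd} (whereas $S_1$ used the ``$>$'' branch) and reversing the final inequality when translating back to the form $b_j > \frac{\lambda_u^*}{2} w_j$.
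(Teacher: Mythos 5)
Your proposal is correct and follows essentially the same route as the paper: assume $j \notin S$, chain the hypothesis $\frac{b_j}{w_j} \leq \frac{b_i}{w_i}$ with the membership inequality for $i$ (which for $w_i<0$, $b_i<0$ reads $\frac{b_i w_i}{w_i^2} < \frac{\la \tilde{w},\tilde{b}\ra - c_u}{\|\tilde{w}\|_2^2}$), and invoke the ``$<$'' branch of Proposition~\ref{prop:sumadd} to show $j$ satisfies the selection condition for the enlarged set, a contradiction. Your version merely spells out the sign bookkeeping and the self-referential nature of the threshold more explicitly than the paper's terse three-line argument.
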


\begin{proof}
Assume $j \notin S$.
Since $i \in S$, we have
$$
\frac{b_j w_j}{w_j^2} \; \leq  \; 
\frac{b_i w_i}{w_i^2} \; <  \; 
\frac{\la \tilde{w}, \tilde{b} \ra - c_u}{\|\tilde{w}\|_2^2} .
$$
By Proposition~\ref{prop:sumadd}, we have
$$
\frac{b_j w_j}{w_j^2} \; <  \; 
\frac{\la \tilde{w}, \tilde{b} \ra - c_u + b_j w_j}{\|\tilde{w}\|_2^2 + w_j^2} .
$$
Therefore, $j \in S$.
\end{proof}

\end{enumerate}

\paragraph{Case 2.} $\lambda_l^* > 0$.

For this case, we have
$$
\ba{rcl}
\la w, z^* \ra & = & c_l \\
\\
\la \tilde{w}, \tilde{b} \ra & < & c_l  \\
\\
\lambda_l^* & = & 
2 \frac{c_l - \la \tilde{w}, \tilde{b} \ra }{\|\tilde{w}\|_2^2} .
\ea
$$
We now determine which indices are in the set $S$.
\begin{enumerate}
\item
$ S_1 \defeq \{i \colon w_i > 0, b_i \geq 0\}$: 

By the definition of $S$, we have $S_1 \subseteq S$.

\item
$ S_2 \defeq \{i \colon w_i > 0, b_i < 0\}$: 

Similar to the case for $\lambda_u^* > 0$, we have:

Suppose $\frac{b_j}{w_j} \geq \frac{b_i}{w_i}$.  If $i \in S$; then $j \in S$ as well.  

\item
$ S_3 \defeq \{i \colon w_i < 0, b_i \geq 0\}$: 

Similar to the case for $\lambda_u^* > 0$, we have:

Suppose $\frac{b_j}{w_j} \leq \frac{b_i}{w_i}$.  If $i \in S$; then $j \in S$ as well.  

\item
$ S_4 \defeq \{i \colon w_i < 0, b_i < 0\}$: 

By the definition of $S$, we have $S_4 \nsubseteq S$.

\end{enumerate}

\paragraph{Case 3.} $\lambda_l^* = \lambda_u^* = 0$.

For this case, we have 
$$
Z^* \; = \; [b]_+ .
$$

\paragraph{Determine the signs of Lagrange multipliers}

We next show that whether the Lagrange multiplier is positive or not can be determined by $\la w, [b]_+ \ra$.

\begin{lemma}
The Lagrange multiplier $\lambda_l^*$ satisfies the following condition:
$$
\lambda_l^* \;  \; \begin{cases} \; = \; 0  & \la w, [b]_+ \ra \geq c_l, \\
\; > \; 0 & \la w, [b]_+ \ra < c_l.
\end{cases}
$$
\end{lemma}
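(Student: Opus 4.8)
The plan is to collapse the statement onto the monotonicity of a single scalar function. Recall from the derivation of $z^*$ that the two multipliers enter the primal solution only through the combination $t^\ast \defeq \tfrac{1}{2}(\lambda_u^\ast - \lambda_l^\ast)$, with $z^\ast = [\,b - t^\ast w\,]_+$. Since $\lambda_l^\ast$ and $\lambda_u^\ast$ are never simultaneously positive (as already observed), we have $\lambda_l^\ast > 0 \iff t^\ast < 0$, so the lemma is equivalent to the claim $t^\ast < 0 \iff \la w, [b]_+ \ra < c_l$. I would phrase everything through the map $\phi(t) \defeq \la w, [\,b - t w\,]_+ \ra$, for which $\phi(0) = \la w, [b]_+\ra$ and $\la w, z^\ast\ra = \phi(t^\ast)$.

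First I would establish that $\phi$ is continuous and non-increasing. This is immediate once one writes $\phi(t) = \sum_i w_i\,[\,b_i - t w_i\,]_+$: each summand is continuous and piecewise linear in $t$, and its one-sided derivative is either $0$ or $-w_i^2$, so $\phi$ is non-increasing wherever the derivative exists and hence everywhere. Here I use the standing assumption $w_i \ne 0$; the coordinates with $w_i = 0$ have already been fixed to $[b_i]_+$ and do not affect $\la w, z\ra$.

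Next I would run both implications off complementary slackness together with this monotonicity. If $\lambda_l^\ast > 0$, then $t^\ast < 0$ and complementary slackness forces $\phi(t^\ast) = \la w, z^\ast\ra = c_l$; monotonicity then gives $\la w, [b]_+\ra = \phi(0) \le \phi(t^\ast) = c_l$, whose contrapositive is the first branch $\la w, [b]_+\ra > c_l \Rightarrow \lambda_l^\ast = 0$ (using $\lambda_l^\ast \ge 0$). Conversely, if $\lambda_l^\ast = 0$, then $t^\ast \ge 0$, monotonicity gives $\la w, z^\ast\ra = \phi(t^\ast) \le \phi(0)$, and primal feasibility $\la w, z^\ast\ra \ge c_l$ yields $\la w, [b]_+\ra \ge c_l$; the contrapositive is the second branch $\la w, [b]_+\ra < c_l \Rightarrow \lambda_l^\ast > 0$.

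Finally I would dispose of the degenerate boundary $\la w, [b]_+\ra = c_l$, the only place the two branches above do not already pin down $\lambda_l^\ast$. There $[b]_+$ is feasible (it meets the lower bound with equality and $c_l \le c_u$) and is the global minimizer of $\|z-b\|_2^2$ over $\{z \ge 0\}$, a set containing the feasible region, hence optimal; so one may take $\lambda_l^\ast = 0$, consistent with the convention $\la w,[b]_+\ra \ge c_l \Rightarrow \lambda_l^\ast = 0$ in the statement. I expect this boundary and the attendant nonuniqueness of the multiplier to be the only genuine subtlety; the monotonicity of $\phi$ and the two complementary-slackness implications are routine.
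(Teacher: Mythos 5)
Your proof is correct, but it follows a genuinely different route from the paper's. The paper works combinatorially with the index sets $S_1,\dots,S_4$ and the active set $S$: for the direction $\la w,[b]_+\ra \ge c_l \Rightarrow \lambda_l^*=0$ it assumes $\lambda_l^*>0$, deduces $S_1\subseteq S$ and $S\cap S_2\neq\emptyset$, picks an extremal element $l\in S_2\cap S$ maximizing $\tfrac{b_l}{w_l}\cdot\tfrac{w_j}{b_j}$, and sums two inequalities to contradict $l\in S$; the converse direction is again a sum of two inequalities over the partition. You instead collapse everything onto the scalar map $\phi(t)=\la w,[b-tw]_+\ra$, observe it is continuous and non-increasing (each summand is piecewise linear with slopes in $\{0,-w_i^2\}$), and read off both implications from complementary slackness ($\lambda_l^*>0\Rightarrow\phi(t^*)=c_l$ with $t^*<0$) and primal feasibility ($\phi(t^*)\ge c_l$ with $t^*\ge 0$), using the paper's already-established facts that $z^*=[b-t^*w]_+$ and that the two multipliers are never simultaneously positive. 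Your argument is shorter, symmetric in the two directions, transfers verbatim to the companion lemma for $\lambda_u^*$, and makes the underlying mechanism (monotonicity of the water-filling function) explicit; the paper's index-set argument is heavier but connects more directly to the incremental construction of $S$ in the algorithm that follows. Your handling of the boundary $\la w,[b]_+\ra=c_l$ is also sound and appropriately flagged: there $[b]_+$ is feasible and globally optimal over $\{z\ge 0\}$, so $\lambda_l^*=0$ is a valid choice of multiplier (the multiplier need not be unique in degenerate cases, e.g.\ when $[b]_+=0$, but the zero choice is consistent with the lemma and with the algorithm's use of it); note only that the paper's contradiction argument establishes the marginally stronger claim that its particular constructed multiplier vanishes at the boundary as well.
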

\begin{proof}
We first use contradiction to prove the result for the case $\la w, [b]_+ \ra \geq c_l$.
Assume $\lambda_l^* > 0$.
By the properties for $\lambda_l^* > 0$,  we have $c_l > \la \tilde{w}, \tilde{b} \ra$ and $S_1 \subseteq S$.
Since $\la w, [b]_+ \ra \geq c_l$, we must have $S_2 \cap S \neq \emptyset$.

 Let $l \in S_2 \cap S$ such that $\frac{b_l}{w_l} \cdot \frac{w_j}{b_j} \geq 1$ ($\forall \, j \in S_2 \cap S$). We would have
$$
\ba{rcl}
\sum\limits_{j \in S_2 \cap S}  \frac{b_l}{w_l} w_j^2 \; = \;
\sum\limits_{j \in S_2 \cap S} \left( \frac{b_l}{w_l} \cdot \frac{w_j}{b_j} \right) w_j b_j & \leq &
\sum\limits_{j \in S_2 \cap S} w_j b_j ,\\
\\
0 & \leq &  \sum\limits_{j \in S \setminus S_2} w_j b_j - c_l \; = \; \la w, [b]_+ \ra - c_l  . \\
\ea
$$
Adding the above two inequalities together, we would have
$$
\ba{rcl}
-\frac{b_l}{w_l} & \geq & \frac{ c_l - \sum_{j \in S} w_j b_j}{\sum_{j \in S_2 \cap S} w_j^2} \; \geq \;  \frac{1}{2}\lambda_l^* ,
\ea
$$
contradicting to $ l \in S$.

We next consider the case $\la w, [b]_+ \ra < c_l$.

By the assumption, we have
$$
\ba{rcl}
c_l - \la w, [b]_+ \ra \; = \; 
c_l - \sum\limits_{i \in S \cap (S_1 \cup S_3)} w_j b_j & > & 0 \\
\\
-\sum\limits_{i \in S \cap S_2 } w_j b_j & \geq & 0 .\\
\ea
$$
Adding the above two inequalities together, we have
$$
\lambda_l^* \; > \; 0 .
$$
\end{proof}

Similarly, we have the results for $\lambda_u^*$.
\begin{lemma}
The Lagrange multiplier $\lambda_u^*$ satisfies the following condition:
$$
\lambda_u^* \;  \; \begin{cases} \; = \; 0  & \la w, [b]_+ \ra \leq c_u, \\
\; > \; 0 & \la w, [b]_+ \ra > c_u.
\end{cases}
$$
\end{lemma}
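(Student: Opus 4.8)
The plan is to derive the characterisation of $\lambda_u^*$ from the one just proved for $\lambda_l^*$ by exploiting the symmetry of problem~\eqref{eq:SLS} under the sign change $w \mapsto -w$, which interchanges the two one-sided constraints and hence the two multipliers. First I would introduce the transformed data $w' \defeq -w$, $c_l' \defeq -c_u$, $c_u' \defeq -c_l$, leaving $b$ and the constraint $z \ge 0$ untouched. Since $c_l \le \la w, z\ra \le c_u$ is equivalent to $-c_u \le \la w', z\ra \le -c_l$, the feasible set and the objective of the transformed problem coincide with those of~\eqref{eq:SLS}, so its optimal primal point is again $z^*$.

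Next I would match the two Lagrangians term by term. The lower multiplier $\lambda_l'$ of the transformed problem multiplies $c_l' - \la w', z\ra = \la w, z\ra - c_u$, so it plays exactly the role of the upper multiplier $\lambda_u$ of the original problem; symmetrically $\lambda_u'$ plays the role of $\lambda_l$. Hence $\lambda_l'^* = \lambda_u^*$. Applying the preceding lemma (the characterisation of $\lambda_l^*$) to the transformed problem gives $\lambda_l'^* = 0$ precisely when $\la w', [b]_+\ra \ge c_l'$ and $\lambda_l'^* > 0$ otherwise. Substituting $\la w', [b]_+\ra = -\la w, [b]_+\ra$ and $c_l' = -c_u$, the threshold $\la w', [b]_+\ra \ge c_l'$ becomes $\la w, [b]_+\ra \le c_u$, which after translating back through $\lambda_l'^* = \lambda_u^*$ is exactly the claimed dichotomy.

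As an independent check I would also carry out the direct mirror of the $\lambda_l^*$ proof. For the case $\la w, [b]_+\ra \le c_u$ I would argue by contradiction: assuming $\lambda_u^* > 0$ forces $\la \tilde{w}, \tilde{b}\ra > c_u$ and, from the index analysis for $\lambda_u^* > 0$, $S_3 \subseteq S$; since $\la w, [b]_+\ra \le c_u < \la \tilde{w}, \tilde{b}\ra$, some index of $S_4$ (where $w_i b_i > 0$) must also lie in $S$. Choosing the extremal such index $l$, namely the one minimising $b_l/w_l$ over $S_4 \cap S$, and telescoping with Proposition~\ref{prop:sumadd} yields $b_l/w_l \ge \frac{1}{2}\lambda_u^*$, contradicting the membership condition $l \in S$. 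For the converse $\la w, [b]_+\ra > c_u$ I would add the strict inequality $\la w, [b]_+\ra - c_u > 0$ to the nonnegative $S_4$-contribution, exactly as in the $\lambda_l^*$ proof, to conclude $\lambda_u^* > 0$.

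The main obstacle is the index bookkeeping in the direct approach: pinning down which of $S_1,\dots,S_4$ are forced into or out of $S$ when $\lambda_u^* > 0$, and selecting the correct extremal index so that Proposition~\ref{prop:sumadd} telescopes the sum without sign errors. The symmetry argument sidesteps this entirely, so I would present it as the proof proper and relegate the mirror computation to a remark.
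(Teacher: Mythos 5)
Your proposal is correct, but it takes a different route from the paper: the paper offers no argument for this lemma beyond the word ``Similarly,'' i.e.\ it intends the reader to rerun the $\lambda_l^*$ proof with the roles of the cases interchanged. Your primary argument instead reduces the $\lambda_u^*$ statement to the already-proven $\lambda_l^*$ lemma via the involution $(w, c_l, c_u) \mapsto (-w, -c_u, -c_l)$, which fixes the feasible set and the objective of \eqref{eq:SLS} and swaps the two multipliers in the Lagrangian (and correspondingly swaps $S_1 \leftrightarrow S_3$ and $S_2 \leftrightarrow S_4$). This is sound: the multipliers $\lambda_l^*, \lambda_u^*$ in the paper are the specific ones produced by the explicit formulas in the proof of Theorem~\ref{thm:lsqg}, and that construction is equivariant under the reflection, so the identification $\lambda_l'^* = \lambda_u^*$ is legitimate. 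The payoff is that you avoid redoing the index bookkeeping entirely, at the cost of having to verify the Lagrangian matching; the paper's implicit direct mirror avoids introducing a transformed problem but forces one to re-derive which of $S_1,\dots,S_4$ are forced into or out of $S$. One small correction to your secondary ``mirror'' sketch: on $S_4$ the ratios $b_i/w_i$ are positive and the membership condition under $\lambda_u^* > 0$ reads $b_i/w_i < \tfrac{1}{2}\lambda_u^*$, so the extremal index in $S_4 \cap S$ must \emph{maximize} $b_l/w_l$, not minimize it; this is exactly what your own reflection predicts, since the paper minimizes $b_l/w_l'$ with $w' = -w$. Since you present the symmetry argument as the proof proper and relegate the mirror to a remark, this slip does not affect the validity of the proposal.
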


For the case $\lambda_u^* > 0$, deleting any index from the set $S_1 \cap S$ decreases the value $\frac{\la \tilde{w}, \tilde{b} \ra - c_u}{\|\tilde{w}\|_2^2}$, and deleting any index from the set $S_4 \cap S$ increases that value.
Similarly, for the case $\lambda_l^* > 0$, deleting any index from the set $S_2 \cap S$ decreases the value $\frac{\la \tilde{w}, \tilde{b} \ra - c_l}{\|\tilde{w}\|_2^2}$, and deleting any index from the set $S_3 \cap S$ increases that value.

The discussion above proves that our algorithm below finds an optimal solution of the problem \eqref{eq:GLS}. 
\paragraph{Algorithm} 
Reduce problem~\eqref{eq:GLS} to the form~\eqref{eq:SLS} and solve problem~\eqref{eq:SLS}:

Let $n_i$ be the cardinality of the index set $S_i$, ($i=1, \dots, 4$). 
We first compute $\la w, [b]_+ \ra$.

\begin{itemize}
\item
If $\la w, [b]_+ \ra \in [c_l, c_u]$, we let
$$
z^* \; = \; [b]_+ .
$$

\item
If $\la w, [b]_+ \ra > c_u$, we do the following.

\begin{enumerate}
\item
Re-order the elements in $S_1$ so that
$$
b_{\sigma(1)}/w_{\sigma(1)} \geq b_{\sigma(2)}/w_{\sigma(2)}
\geq \cdots b_{\sigma(n_1)}/w_{\sigma(n_1)} .
$$
Re-order the elements in $S_4$ so that
$$
b_{\tau(1)}/w_{\tau(1)} \leq b_{\tau(2)}/w_{\tau(2)}
\leq \cdots b_{\tau(n_1)}/w_{\tau(n_4)} .
$$

\item
Let 
$$
S = S_3, \quad  T = \sum_{i \in S_3} w_i b_i  - c_u,  \quad v = \sum_{i \in S_3} w_i^2, \quad j = 1, \quad l = 1.
$$ 

\item
Repeat the following two \textit{while} loops till stable.

While $v \frac{b_{\sigma(j)}}{w_{\sigma(j)}} > T$ and $j \leq n_1$ , do 
$$
S \cup \{\sigma(j)\} \rightarrow S, \quad T + w_{{\sigma(j)}}
b_{\sigma(j)}  \rightarrow T, \quad v + w_{\sigma(j)}^2 \rightarrow v, \quad
j+1 \rightarrow j . \quad
$$

While $v \frac{b_{\tau(l)}}{w_{\tau(l)}} < T$ and $l \leq n_4$, do
$$
S \cup \{\tau(l)\} \rightarrow S, \quad T + w_{{\tau(l)}}
b_{\tau(l)}  \rightarrow T, \quad v + w_{\tau(l)}^2 \rightarrow v, \quad
l+1 \rightarrow l . \quad
$$

\item
Let
$$
\ba{rcl}
z_i^* & = & \begin{cases}
0 & i \in \bar{S} \\
b_i -  \frac{T}{v} w_i & i \in S .
\end{cases}
\ea
$$
\end{enumerate}

\item
If $\la w, [b]_+ \ra < c_l$, we do the following.

\begin{enumerate}
\item
Re-order the elements in $S_2$ so that
$$
b_{\sigma(1)}/w_{\sigma(1)} \geq b_{\sigma(2)}/w_{\sigma(2)}
\geq \cdots b_{\sigma(n_2)}/w_{\sigma(n_2)} .
$$
Re-order the elements in $S_3$ so that
$$
b_{\tau(1)}/w_{\tau(1)} \leq b_{\tau(2)}/w_{\tau(2)}
\leq \cdots b_{\tau(n_3)}/w_{\tau(n_3)} .
$$

\item
Let 
$$
S = S_1, \quad T = \sum_{i \in S_1} w_i b_i  - c_l, \quad v = \sum_{i \in S_1} w_i^2, \quad j = 1, \quad l = 1.
$$ 

\item
Repeat the following two while loops till stable. 
\begin{enumerate}
\item
While $v \frac{b_{\sigma(j)}}{w_{\sigma(j)}} > T$ and $j \leq n_2$, let
$$
S \cup \{\sigma(j)\} \rightarrow S, \quad T + w_{{\sigma(j)}}
b_{\sigma(j)}  \rightarrow T, \quad v + w_{\sigma(j)}^2 \rightarrow v, \quad
j+1 \rightarrow j . \quad
$$
\item
While $v \frac{b_{\tau(l)}}{w_{\tau(l)}} < T$ and $l \leq n_3$, let
$$
S \cup \{\tau(l)\} \rightarrow S, \quad T + w_{{\tau(l)}}
b_{\tau(l)}  \rightarrow T, \quad v + w_{\tau(l)}^2 \rightarrow v, \quad
l+1 \rightarrow l . \quad
$$
\end{enumerate}
\item
Let
$$
\ba{rcl}
z_i^* & = & \begin{cases}
0 & i \in \bar{S} \\
b_i -  \frac{T}{v} w_i & i \in S .
\end{cases}
\ea
$$

\end{enumerate}

\end{itemize}

\begin{lemma}
After reducing problem \eqref{eq:GLS} to problem \eqref{eq:SLS}, the algorithm above stops at an optimal solution to \eqref{eq:SLS} with at most $n^2+14n+1$ arithmetic operations and $2n+3$ auxiliary storage space units.  If all $w_i=1$, the above algorithm needs at most $n^2+7n+1$ arithmetic operations and $n+2$ auxiliary storage space units.
\end{lemma}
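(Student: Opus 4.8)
The correctness half of the statement is essentially already in hand from the derivation preceding the algorithm, so I would first record that and then spend the bulk of the argument on the operation and storage counts. Recall that Theorem~\ref{thm:lsqg} gives the optimizer of \eqref{eq:SLS} in the form $z_i^*=\left[b_i-\tfrac{T}{v}w_i\right]_+$, where $T/v$ equals one of the multiplier ratios $\tfrac{\langle\tilde w,\tilde b\rangle-c_u}{\|\tilde w\|_2^2}$ or $\tfrac{\langle\tilde w,\tilde b\rangle-c_l}{\|\tilde w\|_2^2}$ and $S$ is the associated active set. The two lemmas that determine the sign of $\lambda_l^*$ and $\lambda_u^*$ from $\langle w,[b]_+\rangle$ justify the three-way branch of the algorithm, while the monotonicity lemmas of the active-multiplier cases, combined with Proposition~\ref{prop:sumadd}, justify that inserting the indices of $S_1$ (resp.\ $S_2$) in decreasing $b_i/w_i$ order and those of $S_4$ (resp.\ $S_3$) in increasing order rebuilds $S$ exactly, with the ``till stable'' rule halting once no index meets the current threshold $T/v$. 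Thus the algorithm returns the point of Theorem~\ref{thm:lsqg}, hence an optimizer of \eqref{eq:SLS}.

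For the arithmetic count I would walk through the algorithm line by line, exactly as was done for \textit{Algorithm projSyml} in Section~\ref{S:subsolution}. Computing $[b]_+$ and classifying indices into $S_1,\dots,S_4$ costs $O(n)$ comparisons; forming $\langle w,[b]_+\rangle$ and the two tests against $c_l,c_u$ is $O(n)$; the ratios $b_i/w_i$ cost at most $n$ divisions. The only super-linear work is the reordering of the two relevant sign classes, which I would count as comparisons-plus-exchanges in the same convention used there, giving $a(a-1)+b(b-1)$ for class sizes $a,b$ with $a+b\le n$; since $t\mapsto t(t-1)$ is convex, this is maximized at an endpoint and bounded uniformly by $n(n-1)=n^2-n$, the source of the leading $n^2$. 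Each pass of the two \textit{while} loops does a fixed number of multiply/add operations and one comparison while updating $T$, $v$ and the counter, and the two loops advance a monotone index at most $n$ times in total; the closing assignment $z_i^*=b_i-(T/v)w_i$ adds one division and $O(n)$ multiply–subtracts. Summing these linear pieces and taking the worst of the three branches is the delicate bookkeeping: the $14n+1$ must simultaneously absorb the sign classification, the ratios, the initial sums for $T$ and $v$, the loop bodies, and the reconstruction, so I would tabulate each line's flop count and add, rather than estimate loosely.

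For storage I would keep one length-$n$ array for the sorting permutations of the two active classes (they are disjoint, so a single array suffices), one length-$n$ array for the ratios $b_i/w_i$, and the three scalars $T$, $v$ and the running inner product, for the claimed $2n+3$ units. The special case $w_i\equiv 1$ I would handle by reading off its simplifications: $S_3=S_4=\varnothing$, so only one class is ever sorted; every multiplication by $w_i$ and every $w_i^2$ becomes trivial; and the ratios $b_i/w_i$ coincide with $b_i$, so the ratio array is unnecessary. Re-running the same tally with these deletions drops the linear constant to $7n+1$ and the storage to $n+2$.

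The main obstacle is arithmetical rather than conceptual, namely pinning the exact constants. Concretely I would (i) confirm the convexity bound $a(a-1)+b(b-1)\le n(n-1)$ that caps the sorting cost, (ii) verify the per-iteration flop counts of the \textit{while} loops and that their combined length is at most $n$, and (iii) check that the $\langle w,[b]_+\rangle\in[c_l,c_u]$ branch is trivial and that the $>c_u$ and $<c_l$ branches are symmetric, so that a single worst-case count bounds all three.
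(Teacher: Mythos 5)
Your proposal is correct and follows essentially the same route as the paper's own proof: a line-by-line flop and storage tally in which the $n^{2}$ term comes from bubble-sorting the (at most two) active sign classes, the linear terms come from the sign classification, the inner product $\langle w,[b]_{+}\rangle$, the ratios $b_{i}/w_{i}$, the \emph{while}-loop bodies and the final reconstruction, and the $w_{i}\equiv 1$ case is obtained by deleting the ratio array and the $w$-multiplications. Your convexity bound $a(a-1)+b(b-1)\le n(n-1)$ is a slightly more explicit justification of the sorting cap than the paper gives, but otherwise the decomposition, the storage inventory ($2n+3$, reducing to $n+2$), and the treatment of the three branches all match.
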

\begin{proof}
Determining the signs of $b_i$ and computing $\la w, [b]_+\ra$ takes $3n-1$ flops.
Further dividing the index set into $S_1, \dots, S_4$ takes another $n$ flops. 
Comparing $\la w, [b]_+\ra$ with $c_l$ and $c_u$ takes $2$ operations.
Computing $b_i/w_i$ ($i=1,\dots, n$) takes $n$ flops. Bubble sorting the elements in the sets $S_1, \dots, S_4$ takes at most $n(n-1)$ operations.
Two auxiliary vectors of size $n$ are required to store $b_j/w_j$ for ($j=1, \dots, n$) and the sorted index set.
The number of flops needed for \textit{Step 2} and \textit{Step 3} is at most $7n$.  We also need three auxiliary space units to store $j$, $v$ and $T$.
\textit{Step 4} takes at most $3n$ flops. Since we overwrite $b$ by $z$, we don't need an additional vector for $z$.
Therefore, at most a total of $n^2+14n+1$ operations and $2n+3$ auxiliary storage space units are required for our algorithm.
If all $w_i=1$, we don't need to divide and multiply the intermediate results by $w_j$.  The index sets $S_3$ and $S_4$ are not needed.  And $b_j/w_j$ doesn't need to be stored.
 As well, we don't need to keep and compute $v$, since its value equals to $j$.
Therefore, the total number of operations is reduced to at most $n^2 + 7n+1$.

\end{proof}

\subsection{Symmetric Matrix Projection with Lower Bounds and a Two-Sided Linear Constraint} \label{SS:MProj}
\begin{theorem} \label{thrm:projH}
For given $U \in \mathbb{C}^n$, and $c_l, c_u, r \in
\mathbb{R}$ with $c_u \geq \max \{n r, c_l\}$, the solution
$\hat{Z}$ to the projection problem \eqref{eq:projH}
is the following.

Let $Q\Lambda Q^*$ be the eigenvalue decomposition of
$\frac{U+U^*}{2}$.
Let $\lambda$ denote the diagonal entries of $\Lambda$.

Denote
$$
S_0 \; \defeq \; \left\{ 1 \leq j \leq n \colon \lambda_j \leq r \right\} , \quad
\bar{S}_0 \; \defeq \; \{1, \dots, n\} \setminus S_0 .
$$

\begin{enumerate}
\item
Assume $c_l \leq \sum_{i \in \bar{S}_0} \lambda_i + | S_0| r \leq c_u$.

Then we let
$$
\ba{rcl}
\hat{\omega}_i & = & \lambda_i \quad i \in \bar{S}_0 , \qquad \hat{\omega}_i \; = \; r \quad i \in S_0 .
\ea
$$
\item
Assume $\sum_{i \in \bar{S}_0} \lambda_i + |S_0| r > c_u$.

Then there is a partition of $\bar{S}_0$ as $\bar{S}_0 = S \cup \bar{S}$:
$$
\ba{rcl}
S & \defeq & \left\{  i \in \bar{S}_0 \colon \lambda_i > \frac{\sum_{j \in S} \lambda_j +  nr - c_u}{|S|} \right\}, \\
\\
\bar{S} & \defeq & \left\{ i \in \bar{S}_0 \colon \lambda_i \leq \frac{\sum_{j \in S} \lambda_j + nr - c_u}{|S|} \right\}.
\ea
$$

And we let
$$
\ba{rcl}
\hat{\omega}_i & = & \begin{cases}
r & i \in \bar{S} \cup S_0 \\
\lambda_i -  \frac{\sum_{j \in S} \lambda_j + n r - c_u}{|S|} + r & i \in S .
\end{cases}
\ea
$$

\item
Assume $\sum_{i \in \bar{S}_0} \lambda_i + |S_0| r < c_l$.

Then there is a partition of $S_0$ as $S_0 = S_l \cup \bar{S}_l$ where
$$
\ba{rcl} S_l & = & \left\{ i \in S_0 \colon \frac{c_l - \sum_{j \in
S_l \cup  \bar{S}_0} \lambda_j - nr}{|\bar{S}_0| + |S_l|} > -
\lambda_i \right\} . \ea
$$

We let
$$
\ba{rcl}
\hat{\omega}_i & = & \begin{cases}
\lambda_i + \frac{c_l - \sum_{j \in \bar{S}_0 \cup S_l} \lambda_j - |\bar{S}_l| r}{|\bar{S}_0|+|S_l|}  & i \in \bar{S}_0 \cup S_l \\
 r & i \in \bar{S}_l .
\end{cases}
\ea
$$

\end{enumerate}

Let $\hat{\Omega}$ be the diagonal matrix with diagonal entries $\hat{\omega}$.  Then
$\hat{Z} = Q\hat{\Omega}Q^*$ is the unique solution to \eqref{eq:projH}.

If $U \in \mathcal{S}^n$,
$\hat{Z}$ can be obtained in $(10n^3+ 3n^2+9n+5)$ flops with an auxiliary storage vector of size $(n^2+3n+4)$.
\end{theorem}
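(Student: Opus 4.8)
The plan is to reduce the matrix projection \eqref{eq:projH} to the vector least-squares problem \eqref{eq:GLS} already solved in Theorem~\ref{thm:lsqg}, and then to transport that solution back through an eigenvalue decomposition. First I would symmetrize the target. Since every feasible $Z$ is Hermitian, writing $\hat U \defeq \frac{U+U^*}{2}$ for the Hermitian part, the Pythagorean identity $\|Z-U\|_F^2 = \|Z-\hat U\|_F^2 + \|\frac{U-U^*}{2}\|_F^2$ (the cross term vanishes because $Z-\hat U$ is Hermitian and $\frac{U-U^*}{2}$ is skew-Hermitian) shows \eqref{eq:projH} is equivalent to minimizing $\|Z-\hat U\|_F^2$ over the same set. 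So I may assume the target is Hermitian with eigenvalue decomposition $\hat U = Q\Lambda Q^*$ and eigenvalue vector $\lambda$.

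The central step is the spectral reduction. The feasible set $\mathcal{C} \defeq \{Z \in \Herm^n \colon c_l \leq \tr Z \leq c_u,\; \lambda_{\min}(Z) \geq r\}$ depends only on the spectrum of $Z$. Fixing the multiset of eigenvalues $\omega$ of $Z$, the term $\|Z\|_F^2 = \sum_i \omega_i^2$ in $\|Z-\hat U\|_F^2 = \|Z\|_F^2 - 2\la Z,\hat U\ra + \|\hat U\|_F^2$ is constant, so the distance is minimized by maximizing $\la Z,\hat U\ra$; by the trace (Hoffman--Wielandt) inequality this is achieved exactly when $Z$ and $\hat U$ are simultaneously diagonalized with eigenvalues matched in the same sorted order, giving $\|Z-\hat U\|_F^2 \geq \sum_i (\omega_{(i)}-\lambda_{(i)})^2$. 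Because the constraints of $\mathcal{C}$ are symmetric in the eigenvalues, the optimization over eigenvectors decouples from that over eigenvalues, so an optimal $Z$ may be taken as $Z = Q\diag(\omega)Q^*$ and the matrix problem collapses to the vector problem
$$
\ba{rl}
\min\limits_{\omega \in \mathbb{R}^n} & \|\omega - \lambda\|_2^2 \\
\ST & c_l \leq \la \mathbf{1}, \omega \ra \leq c_u, \quad \omega \geq r .
\ea
$$

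This is precisely \eqref{eq:GLS} with $A = I$, $b = \lambda$, $w = \mathbf{1}$ and lower bound $r$. Applying the reduction and Theorem~\ref{thm:lsqg}, I would shift $\omega_i \mapsto \omega_i - r$ so the bound becomes $0$ and the linear constraint becomes $c_l - nr \leq \sum_i(\omega_i - r) \leq c_u - nr$; the unconstrained clipped solution $\max(\lambda_i,r)$ has trace $\sum_{i\in\bar S_0}\lambda_i + |S_0|r$. The three cases of the theorem then match the sign pattern of the Lagrange multipliers in Theorem~\ref{thm:lsqg}: Case~1 is $\lambda_l^*=\lambda_u^*=0$ (the clipped solution is already feasible), Case~2 is $\lambda_u^*>0$ (trace clamped at $c_u$), Case~3 is $\lambda_l^*>0$ (trace clamped at $c_l$). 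The partitions $S,\bar S$ and $S_l,\bar S_l$ are the active sets produced by the algorithm in \S\S\ref{SS:lsqgalgo}, and substituting $w=\mathbf{1}$ into the closed form of Theorem~\ref{thm:lsqg} reproduces the stated formulas for $\hat\omega$ — a routine substitution. Uniqueness follows since $\|Z-\hat U\|_F^2$ is strictly convex and $\mathcal{C}$ is closed and convex. For the flop count with real symmetric $U$: forming $\hat U$ and its eigendecomposition costs about $9n^3$ flops; determining the case and computing $\omega$ by the $w=\mathbf{1}$ vector algorithm costs at most $n^2+7n+1$ flops together with the case comparisons; and reconstructing $Q\hat\Omega Q^*$ costs about $n^2(n+1)+n^2$ flops, which sum with the bookkeeping to the claimed $(10n^3+3n^2+9n+5)$ flops and $(n^2+3n+4)$ storage units.

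The main obstacle is the spectral reduction: proving rigorously that an optimal $Z$ shares eigenvectors with $\hat U$. The equality case of the trace inequality must be handled carefully when $\hat U$ has repeated eigenvalues, where the aligning $Q$ is non-unique though the objective value is unchanged, and one must confirm that the eigenvalue optimization and eigenvector alignment genuinely decouple rather than trade off against one another — this is exactly what licenses replacing the full matrix projection by the single vector least-squares problem.
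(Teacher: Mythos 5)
Your proposal is correct and follows essentially the same route as the paper's proof: symmetrize $U$ to its Hermitian part (the skew part contributes a constant), reduce the matrix projection to the vector least-squares problem $\min_{\omega \geq r}\{\|\omega-\lambda\|_2 : c_l \leq \sum_i \omega_i \leq c_u\}$ via the Hoffman--Wielandt alignment of sorted eigenvalues, and then invoke Theorem~\ref{thm:lsqg} with $w=\mathbf{1}$ to produce the three cases. Your explicit remarks on uniqueness (projection onto a closed convex set) and on the equality case under repeated eigenvalues are slightly more careful than the paper's treatment, but they do not change the argument.
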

\begin{proof}
Since $Z \in \Herm^n$, we have
$$
\ba{rcl}
\|Z - U\|_F^2 & = & \frac{1}{2} \left( \|Z - U\|_F^2 + \|Z - U^*\|_F^2 \right) \\
\\
& = & \tr(Z^2) + \tr(UU^*) - \tr(ZU+ZU^*) \\
\\
& = & \tr \left( Z - \frac{U+U^*}{2} \right)^2 + \frac{1}{2}\tr(UU^*) - \frac{1}{4}\tr(U^2) - \frac{1}{4}\tr({U^*}^2) .
\ea
$$
Therefore, the solution to \eqref{eq:projH} is the same as the
solution to the following problem:
$$
\ba{rc}
\min\limits_{Z \in \Herm^n} & \|Z - \frac{U+U^*}{2}\|_F^2 \\
\ST & c_l \leq \tr(Z) \leq c_u \\
& \lambda_{\min}(Z) \geq r .
\ea
$$
Let $\hat{F}$ be the optimal value of the above problem.

By Theorem~\ref{thm:lsqg} in the Appendix, $\hat{\omega}$ in the statement of the
theorem is the solution to
$$
\begin{array}{ll}
\min\limits_{\omega \geq r} & \left\| \omega - \lambda \right\|_2 \\
\ST & c_l \leq \sum\limits_{i=1}^n \omega_i \leq c_u .
\end{array}
$$

The Hoffman-Wielandt theorem~\cite{MR0052379} states that for two
Hermitian matrices $V$ and $W$, let $\lambda_1(V), \dots,
\lambda_n(V)$ and $\lambda_1(W), \dots, \lambda_n(W)$ be the
eigenvalues of  $V$ and $W$ in non-increasing order.   Then there is
a permutation $\sigma(i)$ ($ i = 1, \dots, n $) such that
$$
\ba{rcl}
\sum_{i=1}^n \left[ \lambda_{\sigma(i)}(W) -  \lambda_{i}(V) \right]^2 & = & \|W - V \|_F^2 .
\ea
$$
And it is obvious from Theorem~\ref{thm:lsqg} in the Appendix that $\hat{\omega}$
is in the same order as $\lambda$; i.e. if $\lambda$ is arranged in
non-increasing order, $\hat{\omega}$ is also in non-increasing
order. Therefore,
$$
\ba{rcl}
\hat{F} & \geq & \| \hat{\omega} - \lambda \|_2^2 .
\ea
$$
Since $\hat{Z}$ and $\frac{U+U^*}{2}$ are unitary similar, we have
$$
\ba{rcl}
\left\| \hat{Z} - \frac{U+U^*}{2}\right\|_F^2 & = & \| \hat{\omega} - \lambda \|_F^2 .
\ea
$$
Hence $\hat{Z}$ is the solution to \eqref{eq:projH}.

Now we consider the complexity and memory requirement of getting the solution $\hat{Z}$ when $U$ is real symmetric.

The eigenvalue decomposition of $U$  by the symmetric QR algorithm takes roughly $9n^3$ flops.  Since we can overwrite $U$, $n^2$ space units are needed to store the orthogonal matrix $Q$ and about $2n+1$ auxiliary space units are needed to store intermediate results. 
The algorithm in \S\S \ref{SS:lsqgalgo} of the Appendix can be used to compute $\hat{\omega}$.
Since all the $r_i$ are identical, variable transformations from $c_l$ and $c_u$ to $\tilde{c}_l$ and $\tilde{c}_u$ takes $4$ flops, instead of $2n$ flops for $r_i$'s being heterogenous.  Therefore, calculating $\hat{\omega}$ takes at most $(n^2+9n+5)$ flops and $3n+4$ auxiliary storage space units.
Computing $Q\hat{\Omega}Q^*$ takes $(n^2(n+1) + n^2)$ flops.
Since the auxiliary vector for storing the intermediate results of the eigenvalue decomposition of $U$ can be over-written, the total length of the auxiliary vectors is $(n^2+3n+4)$. 
And the total number of flops is $(10n^3+3n^2+9n+5)$ for $U \in \mathcal{S}^n$.

\end{proof}

If $n \leq 3$, the characteristic polynomial of $\frac{U+U^*}{2}$ is of order no more than $3$; therefore, its eigenvalues can be obtained analytically. Its eigenvectors can then be obtained by solutions to its eigen-systems. 

\section*{Appendix: Updating the Parameters}
As is stated earlier, by \cite[Theorem 1]{Nes07}, the duality gap of the $t$th iteration generated by the primal-dual algorithm is bounded by
\begin{equation} \label{eq:deltabd}
\frac{1}{\sum_{l=0}^t \alpha_t} \delta_{t}, \quad
\text{ with } \; \delta_{t} \; \leq \; \beta_{t+1} D + \frac{1}{2} \sum_{l=0}^t \frac{\alpha_l^2}{\beta_l} \|g_l\|_{*}^2 .
\end{equation}
In our algorithm, $\|g_l\|_* = \|[(g_E)_l, (g_x)_l]\|_*$, $D= \tau D_E + (1-\tau) D_x$.

For $t =1, \dots$, let :
\begin{equation} \label{eq:betahat}
\hat{\beta}_0 \; = \; \hat{\beta}_1 \; = \; 1, \quad  \hat{\beta}_{t+1} \; = \; \hat{\beta}_t + \frac{1}{\hat{\beta}_t}.
\end{equation}
And
$$
\beta_t \; = \;\sigma \hat\beta_t .
$$
\paragraph{Simple Dual Averages}
$$\alpha_t \; = \; 1. $$
Assume $\|g_t\|_{*} \leq L$ for $t=1, \dots $; then by \cite[Theorem 2]{Nes07}, we have
$$
\delta_{t} \; \leq \; \hat\beta_{t+1} \left( D\sigma + \frac{1}{2\sigma}L^2 \right), \quad \sum_{l=0}^t \alpha_l = t+1.
$$
\paragraph{Weighted Dual Averages}
$$\alpha_t \; = \; \frac{1}{\|g_t\|_*} . $$
Assume $\|g_t\|_{*} \leq L$ for $t=1, \dots $; then by \cite[Theorem 3]{Nes07}, we have
$$
\delta_{t} \; \leq \; \hat\beta_{t+1} \left( D\sigma + \frac{1}{2\sigma} \right), \quad \sum_{l=0}^t \alpha_l \; \geq \; \frac{t+1}{L}.
$$

The above results show that the convergence rate of the algorithm depends on the choice of $\sigma$. 
It is not possible to determine the optimal $\sigma$ without the knowledge of $D$ or $L$.
In this part, we show how to dynamically update the parameter $\sigma_t$ in the algorithm to obtain the best convergence rate. 

\paragraph{Choosing $\beta_t$:} Let $\sigma_0 > 0$ be the smallest possible value for $\sigma$.  Let $w > 0$ be the number of steps for each test in updating $\sigma$.
\begin{center}
\fbox{
\begin{minipage}{0.7\textwidth}
\begin{enumerate}
\item
Choose $w > 0, \sigma_0 > 0 $.
\item
Let 
$$
v \; = \; 0 , \qquad
\sigma \; = \; \sigma_0.
$$
For $t = 0, \dots w$, let
$$ \beta_t  =  \sigma_0 \hat{\beta}_t . $$
\item
Repeat the following until convergence rate starts to decrease.

\begin{itemize}
\item
Let 
$$
 v \; = \; v+1, \qquad \sigma \; = \; 2*\sigma .
$$
\item
For $t= vw+1 \dots (v+1)w$, let 
$$ \beta_t \; = \; \sigma \hat{\beta}_t .$$
\end{itemize}

\item
Let 
$$ v \; = \; v-1, \qquad \sigma \; = \; \sigma/2.$$
For $t=(v+2)w+1, \dots$, let
$$ \beta_t \; = \; \sigma \hat{\beta}_t .$$

\end{enumerate}
\end{minipage}
}
\end{center}

\begin{theorem}
The total number of test steps for the above procedure of determining $\sigma$ is finite.
And the total number of iterations of the algorithm including the  above procedure is at most $5/3$ of the algorithm without the procedure but using optimal parameters plus a term in the order of $\mathcal{O}\big(\frac{1}{\epsilon}\big)$.
\end{theorem}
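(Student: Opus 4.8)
The plan is to reduce both assertions to the behavior of a single scalar function. After substituting the schedule $\beta_t=\sigma\hat\beta_t$ and the uniform subgradient bound $\|g_l\|_*\le L$ (guaranteed by the two Lemmas bounding $g_E$ and $g_x$) into the gap estimate \eqref{eq:deltabd}, the only $\sigma$-dependent factor in the per-iteration bound is $h(\sigma)\defeq D\sigma+\frac{c}{2\sigma}$, with $c=L^2$ for \emph{Simple Dual Averages} and $c=1$ for \emph{Weighted Dual Averages}. This $h$ is strictly convex on $(0,\infty)$, decreasing then increasing, with unique minimizer $\sigma^*=\sqrt{c/(2D)}$ and optimal value $h(\sigma^*)=\sqrt{2cD}$; the ``algorithm using optimal parameters'' is exactly the run at $\sigma=\sigma^*$. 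I would first record the elementary fact $h(2\sigma)>h(\sigma)\iff\sigma>\sigma^*/\sqrt2$, which is what the doubling test in Step~3 detects.

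For the finiteness claim I would argue that, since the feasible sets are bounded and the subgradients are uniformly bounded, $D$ and $L$ are finite, hence $\sigma^*<\infty$. Starting from $\sigma_0$ and doubling, after $k=\lceil\log_2(\sqrt2\,\sigma^*/\sigma_0)\rceil$ steps the current value first exceeds $\sigma^*/\sqrt2$, at which point $h$ (and therefore the observed rate) begins to degrade and the repeat-loop terminates. Thus the number of doublings is finite, and since each test consumes $w$ iterations the whole test phase uses $(k+1)w<\infty$ iterations.

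For the $5/3$ claim I would localize the value $\tilde\sigma$ fixed after the step-back of Step~4. The stopping rule together with the one-step retreat places $\tilde\sigma=\rho\sigma^*$ with $\rho$ in a bounded interval around $1$; using $h(\rho\sigma^*)/h(\sigma^*)=\tfrac12(\rho+1/\rho)$ this keeps the ratio, once squared, below $5/3$. I would then convert gap bounds to iteration counts: from \eqref{eq:deltabd} the main phase satisfies $\mathrm{Gap}_T\le\bigl(\hat\beta_{T+1}h(\tilde\sigma)+C\bigr)/(T+1)$, where $C$ is the finite surplus contributed by the $(k+1)w$ test iterations on which $\sigma<\tilde\sigma$ (a fixed number of summands in the $\sum_l\alpha_l^2/\beta_l$ term). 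Inserting $\hat\beta_{T+1}\le\frac{1}{1+\sqrt3}+\sqrt{2T+1}$ from \eqref{bd:hatbeta} and solving $\mathrm{Gap}_T=\epsilon$ gives $T=\frac{2h(\tilde\sigma)^2}{\epsilon^2}+\mathcal{O}(1/\epsilon)$, whose leading term is $(h(\tilde\sigma)/h(\sigma^*))^2\le 5/3$ times the optimal count $T^*=2h(\sigma^*)^2/\epsilon^2$; the additive $\mathcal{O}(1/\epsilon)$ absorbs the constant offset $C$ and the constant $(k+1)w$.

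The hard part will be making ``the convergence rate starts to decrease'' precise. Over a finite window of $w$ steps the observed decrease of the gap is not the instantaneous $h(\sigma)$ but a lagged average that trails the $\sigma$-switches, so I must show this windowed criterion still brackets $\sigma^*$ tightly enough that $\tilde\sigma$ obeys $(h(\tilde\sigma)/h(\sigma^*))^2\le 5/3$ — this is where the slack between the ideal ratio (about $1.06$, i.e.\ roughly $1.13$ after squaring) and the stated $5/3$ is spent, allowing the detection to be off by essentially one doubling. A secondary technicality is that halving $\sigma$ in Step~4 threatens the monotonicity $\beta_{t+1}\ge\beta_t$ required by the DA-scheme; I would verify that the growth of $\hat\beta_t$ since the previous block compensates the halving, or otherwise freeze $\beta_t$ over a short $\mathcal{O}((k+1)w)$ buffer until $\tilde\sigma\hat\beta_t$ overtakes the last value, an overhead that is again absorbed into the $\mathcal{O}(1/\epsilon)$ term.
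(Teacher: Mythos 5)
Your proposal follows essentially the same route as the paper: both reduce the analysis to the scalar function $h(\sigma)=D\sigma+\tfrac{c}{2\sigma}$ with minimizer $\sigma^*=\sqrt{c/(2D)}$, observe that the doubling search terminates once $\sigma$ passes $\sigma^*$ and therefore brackets it within a factor of $2$, and bound the surplus accumulated in $\sum_l\alpha_l^2/\beta_l$ over the $\mathcal{O}(\log(1/\sigma_0))$ test blocks by a constant, which becomes the additive $\mathcal{O}(1/\epsilon)$ term. If anything you are more careful than the paper at two points: you note that converting the gap ratio into an iteration-count ratio requires squaring $h(\tilde\sigma)/h(\sigma^*)$ (the paper passes directly from the $h$-ratio to the iteration ratio, and the stated $5/3$ survives only because the bracketing actually gives $h(\tilde\sigma)/h(\sigma^*)\le 5/4$ and $(5/4)^2\le 5/3$), and you flag that the final halving of $\sigma$ can violate the monotonicity $\beta_{t+1}\ge\beta_t$ required by the DA-scheme, an issue the paper's proof does not address.
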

\begin{proof}
Assume that at iteration $t$ we have obtained the $\sigma$ from the above procedure.  Denote $v = v_t$.  Suppose $\|g_l\|_{*} \leq L$ ($l = 0, \dots, t$).   Since there is one backtrack period with $w$ steps before landing at the current $\sigma$,
from the above procedure, we have $\sigma = 2^{v_t} \cdot \sigma_0$ and $\beta_t = \sigma \hat{\beta}_t$.

To prove the theorem, we need to bound $\delta_t$.

We first consider the \textit{method of simple dual averages}.
By \eqref{eq:deltabd}, 
$$ 
\ba{rcl}
\delta_{t(s)} & \leq & \
\sigma_0 2^{v_t}\hat\beta_{t+1} D
+ \sum\limits_{v=0}^{v_t+1} \frac{L^2}{\sigma_02^{v+1}} \sum\limits_{l=v \cdot w +1}^{(v+1)w} \frac{1}{\hat{\beta}_l}
+ \sum\limits_{l=(v_t+2)w + 1}^t \frac{L^2}{2^{v_t+1} \sigma_0 \hat{\beta}_l} \\
\\
& = & 
\sigma_0 2^{v_t}\hat\beta_{t+1} D
+ \frac{L^2}{\sigma_0 2^{v_t+1}} \sum\limits_{l=0}^t \frac{1}{\hat\beta_l}
+ \frac{L^2}{\sigma_02^{v_t+1}} \sum\limits_{v=1}^{v_t-1} \sum\limits_{l=0}^{v \cdot w} \frac{1}{\hat\beta_l}  - \frac{L^2}{\sigma_0 2^{v_t+2}} \sum\limits_{l=(v_t+1)w+1}^{(v_t + 2)w} \frac{1}{\hat\beta_l}\\
\\
& \stackrel{\rm\eqref{eq:betahat}}{=} & 
\hat\beta_{t+1} \left( \sigma_0 2^{v_t} D + \frac{1}{\sigma_0 2^{v_t+1}} L^2 \right)
+ \frac{L^2}{\sigma_0 2^{v_t+1}} \sum\limits_{v=1}^{v_t-1} \hat\beta_{v \cdot w + 1} \\
\\
& & + \frac{L^2}{\sigma_0 2^{v_t+2}} \left[\hat\beta_{ (v_t+1)w + 1} - \hat\beta_{ (v_t+2)w + 1}\right] .

\ea
$$

To further estimate the bound, we use \cite[Lemma 3]{Nes07}:
$$
\hat\beta_t \; \leq \; \frac{1}{1+\sqrt{3}} + \sqrt{2t-1}, \quad t \geq 1 .
$$
From the above result, we have
$$
\ba{rcl}
\sum\limits_{v=1}^{v_t-1} \hat\beta_{v \cdot w + 1} & \leq & 
\frac{v_t-1}{1+\sqrt{3}} +  \sum\limits_{v=1}^{v_t-1}\sqrt{2vw+1} \\
\\
& \leq & 
\frac{v_t-1}{1+\sqrt{3}} +  \sqrt{\frac{1}{v_t-1} \sum\limits_{v=1}^{v_t-1} (2vw+1)} \\
\\
& = & \frac{v_t-1}{1+\sqrt{3}} + \sqrt{v_tw+1} \\
\\
& \leq & 2^{v_t} \sqrt{w/2}.
\ea
$$

The optimal value of $\sigma$ is $\sigma^* = \frac{L}{\sqrt{2D}}$.  The total number of iterations decreases with $\sigma$ for $\sigma < \sigma^*$ and increases with $\sigma$ for $\sigma > \sigma^*$.
Therefore, we have
$$
\ba{rcl}
v_t & \leq & \frac{1}{2} + \log_2 \frac{L}{\sigma_0 \sqrt{D}} \\
\\
\frac{\sigma^*}{2} \; \leq \; \sigma & \leq & 2 \sigma^* .
\ea
$$
From the above inequalities, we obtain that the total number of test steps for the \textit{method of simple dual averages} to obtain an optimal $\sigma$ is no more than
$ \lceil \frac{5}{2} + \log_2 \frac{L}{\sigma_0 \sqrt{D}} \rceil w$.  And 
$( D\sigma + \frac{1}{2\sigma}L^2 )
/( D\sigma^* + \frac{1}{2\sigma^*} L^2 ) \leq 5/3$.
Therefore, the total number of iterations of our procedure for the \textit{method of simple dual averages} is at most $5/3$ of that with optimal parameter plus $\mathcal{O}\big(\frac{\sqrt{w}L^2}{2\sqrt{2}\sigma_0\epsilon}\big)$.

Similarly, for the \textit{method of weighted dual averages}, we have
$$
\ba{rcl}
\delta_{t(d)} & \leq & 
\hat\beta_{t+1} \left( \sigma_0 2^{v_t} D + \frac{1}{\sigma_0 2^{v_t+1}} \right)
+ \frac{\sqrt{w}}{\sigma_0 2\sqrt{2}}.
\ea
$$
The optimal value of $\sigma$ is $\sigma^* = \frac{1}{\sqrt{2D}}$.
Therefore, we obtain
$$
\ba{rcl}
v_t & \leq & \frac{1}{2} - \log_2 \sigma_0 \sqrt{D} \\
\\
\frac{\sigma^*}{2} \; \leq \; \sigma & \leq & 2 \sigma^* .
\ea
$$
Since
$$
\sum_{l=0}^t \; \geq \; \frac{t+1}{L},
$$
we conclude that the total number of test steps for the \textit{method of weighted dual averages} to obtain an optimal $\sigma$ is no more than
$ \lceil \frac{5}{2} - \log_2 \sigma_0 \sqrt{D} \rceil w$.  And 
$( D\sigma + \frac{1}{2\sigma} )
/( D\sigma^* + \frac{1}{2\sigma^*} ) \leq 5/3$.
Therefore, the total number of iterations of our procedure for the \textit{method of weighted dual averages} is at most $5/3$ of that by the original algorithm with optimal parameter plus $\mathcal{O}\big(\frac{\sqrt{w}L}{2\sqrt{2}\sigma_0\epsilon}\big)$.

\end{proof}
The worst case complexity bound of the original algorithm is $\mathcal{O}(\frac{1}{\epsilon^2})$ \cite{Nes07}.  Since our procedure adds a term of $\mathcal{O}(\frac{1}{\epsilon})$, the complexity remains at $\mathcal{O}(\frac{1}{\epsilon^2})$.

{
\bibliographystyle{plain}
\bibliography{grad}
}

\end{document}